\newtheorem{theorem}{Theorem}[section]
\newtheorem{lemma}[theorem]{Lemma}
\newtheorem{corollary}[theorem]{Corollary}
\theoremstyle{definition}
\theoremstyle{remark}
\newtheorem{remark}[theorem]{Remark}
\numberwithin{equation}{section}
\DeclareMathOperator{\vol}{vol}
\renewcommand{\epsilon}{\varepsilon}
\renewcommand{\phi}{\varphi}
\newcommand{\lmink}{\mathop{\underline{\mathcal M}}\nolimits}
\newcommand{\haus}{\mathop{\mathcal H}\nolimits}
\newcommand{\sys}{\mathop{\mathrm{Sys}}\nolimits}
\newcommand{\skel}{\mathop{\mathrm{skel}}\nolimits}
\newcommand{\pr}{\mathop{\mathrm{pr}}\nolimits}
\newcommand{\R}{\mathbb{R}}
\newcommand{\ZZ}{\mathbb{Z}}
\newcommand{\Sphere}{\mathbb{S}}
\title{Lower and upper bounds for the waists of different spaces}
\author{Arseniy Akopyan{$^\spadesuit$}}
\author{Alfredo Hubard{$^\clubsuit$}}
\author{Roman Karasev{$^\diamondsuit$}}
\thanks{{$^\spadesuit$} Supported by People Programme (Marie Curie Actions) of the European Union's Seventh Framework Programme (FP7/2007--2013) under REA grant agreement n$^\circ$[291734].}
\thanks{{$^\clubsuit$} Supported by the Advanced Grant of the European Research Council GUDHI (Geometric Understanding in Higher Dimensions).}
\thanks{{$^\diamondsuit$}Supported by the Russian Foundation for Basic Research grant 15-01-99563 A}
\address{{$^\spadesuit$} Institute of Science and Technology Austria (IST Austria), Am Campus 1, 3400 Klosterneuburg, Austria}
\email{akopjan@gmail.com}
\address{{$^\clubsuit$}Universit\'e Paris-Est Marne-la-Vall\'ee}
\email{alfredo.hubard@u-pem.fr}
\address{{$^\diamondsuit$}Moscow Institute of Physics and Technology, Institutskiy per. 9, Dolgoprudny, Russia 141700
\newline \indent
    Institute for Information Transmission Problems RAS, Bolshoy Karetny per. 19, Moscow, Russia 127994}
\email{r\_n\_karasev@mail.ru}
\urladdr{http://www.rkarasev.ru/en/}
\subjclass[2010]{49Q20,53C23}
\begin{document}

\begin{abstract}
We prove several new results around Gromov's waist theorem. We give a simple proof of Vaaler's theorem on sections of the unit cube using the Borsuk--Ulam--Crofton technique. We consider waists of real and complex projective spaces, flat tori, convex bodies in Euclidean space. We establish waist-type results in terms of the Hausdorff measure.
\end{abstract}

\maketitle

\section{Introduction}

Using his version of Morse theory, Almgren showed that for any smooth map $f:\mathbb S^n \to \mathbb R^k$, under certain general position assumptions, there exists a $y \in \mathbb R^k$ such that the Riemannian volume of the fiber $f^{-1}(y)$ is larger than the Riemannian volume of the sphere $\mathbb S^{n-k}$ (here $\mathbb S^i:=\{x \in \mathbb R^{i+1}: |x|=1\}$), and this bound on 
\[
\inf_{f} \sup_{y \in  Y}  \vol_{n-k} (f^{-1}(y)),
\] 
is achieved by a linear projection restricted to $\mathbb S^n$.

Almgren's work was motivated by the quest of minimal submanifolds in Riemannian manifolds. Gromov focused on the inherent interest of the aforementioned result, he observed the similarity with other geometric invariants like the Urysohn width, the Alexandrov width, the Cheeger constant, etc. he coined the term \emph{waist} (sometimes called the width as both words translate from the Russian ``poperechnik''),  and wondered if the same result holds true for all continuous maps.  In the case of continuous maps fibers might not be rectifiable, so a different way to measure volume is needed. Gromov first considered the waist measured by the Hausdorff measure.  He used his filling technique to show the existence of a constant $\epsilon_{n,k}$ so that for every continuous map $f$ there exists a fiber $f^{-1}(y)$ with $(n-k)$-dimensional Hausdorff measure, denoted by $\haus^{n-k}(f^{-1}(y))$ is at least $\epsilon_{n,k} \haus^{n-k}(\mathbb S^{n-k})$. It is an open problem if this statement remains true for $\epsilon_{n,k}=1$.  Years later ~\cite{grom2003}, he considered the waist measured by the Minkowski content, and introduced the $t$-neighborhood waist (the $t$-waist for briefness): Given a measure metric space $X$, a topological space $Y$, and a set of maps $\Gamma$ from $X$ to $Y$, the \emph{$t$-neighborhood waist} of $\Gamma$ can be generally defined by 
\[
\inf_{f \in \Gamma} \sup_{y \in  Y}  \mu(\nu_t f^{-1}(y)),
\]
where $\nu_t f^{-1}(y)$ is the set of points at distance less than $t$ from $f^{-1}(y)$. Estimating $t$-neighborhood waists of a space of maps is a challenging task even for simple spaces of maps. In~\cite{grom2003} Gromov showed that the $t$-neighborhood waist of continuous maps $C(\mathbb S^n,\mathbb R^k)$ is larger than $\vol(\mathbb \nu_tS^{n-k})$, where $\vol$ is the Riemannian volume in $\mathbb S^n$. From this theorem, Almgren's theorem (for fibers of a map to a manifold) can be recovered by taking $t\to 0$, as it implies that the \emph{lower Minkowski content} (not a measure, strictly speaking) of a fiber is at least that of an equatorial $\mathbb S^{n-k}$ sphere, where the lower Minkowski content is defined as
\[
\lmink_{n-k} X = \liminf_{t \to+0} \frac{\vol_n(\nu_t X)}{v_{k} t^k},
\]
where $v_k=\frac{\pi^{k/2}}{(k/2)!}$ is the volume of the unit Euclidean $k$-dimensional ball. 

\subsection{This paper}

In this note we consider analogous waist inequalities for a number of other spaces. Firstly we observe that under symmetry hypothesis on the map $f$, the waist inequality for Hausdorff measure with $\epsilon_{n,k}=1$ can be easily derived from topological considerations and Crofton's formula. A variation of this technique provides as a corollary a short proof of a famous theorem of Vaaler on the linear waist of the cube (every central linear section of the unit cube has at least unit volume), as well as a nonlinear generalization. 

This is followed by our study of the waists of a number of other spaces\footnote{For much more information on waist inequalities see Gromov's papers, and Guth's essay~\cite{guth2014} and papers.}: real and complex projective spaces, rectangular flat tori, and convex bodies. Many of our results concern the waist measured by the Minkowski content developing the ideas of~\cite{klartag2016,ak2016ball}, where the waist of rectangular parallelotopes and ellipsoids (in the sense of the lower Minkowski content) was determined.
 
In a subsequent section we study a different generalization: waist inequalities in terms of topological invariants of the space of cycles of a space. We focus on the space of cycles of spheres, balls, and cubes. Waist inequalities in terms of the space of cycles are more general then the waist inequalites for smooth (and sufficiently regular) maps. Then we make some observations about the known results on the volume of other cohomology classes of the space of cycles. The point we want to make here is that the same Minkowski content considerations that are previosly used in the paper carry to this context. 

Finally, we come back to the Hausdorff measure. We generalize a result of Lebesgue in the following way:  we prove that there exist $\varepsilon_{n,\delta}>0$, such that for every finite covering of the cube $[0,1]^n$ by family of closed sets $\{C_i\}$ so that each set $C_i$ intersects at most $\delta$ of the other sets,  there exists $k\in\{0,\ldots, n\}$, and indices $i_0<i_1<\dots<i_k$ such that $\haus^{n-k}(C_{i_0}\cap \dots \cap C_{i_k})> \varepsilon_{n,\delta}$, where as before $\haus^{n-k}$ denotes the $(n-k)$-dimensional Hausdorff measure.

\subsection{Contribution and organization}
With hindsight we suspect that some of the results of this paper are folklore. This is probably the case for theorems ~\ref{theorem:bu-hausdorff} \ref{theorem:rp3functions} \ref{corollary:rpn-gromov}, ~\ref{theorem:rpn-hopfmaps}. Many of our results seem new and come as natural corollaries and extensions of the technique used in ~\cite{klartag2016,ak2016ball}, this is the case for ~\ref{theorem:non-linear-vaaler-cube},~\ref{theorem:non-linear-vaaler-balls},  ~\ref{corollary:cpn-gromov}, ~\ref{corollary:tortho}, ~\ref{theorem:tor2}, and the results of Section 6. A third type of results are new and combine LS-category type results with geometric observations, this is the case for \ref{theorem:rp2functions} and \ref{theorem:1waist}. Finally, Section  \ref{section:hausdorff-cube} involves a generalization of the Lebesgue lemma using Gromov's filling technique; this is also new although it mimics the analogous result for the discrete cube from~\cite{kar2013cube}.


 
In Section~\ref{section:hausdorff-odd} we describe our observation for the Hausdorff waist of odd maps and our simplified proof of Valeer's theorem; Sections \ref{section:projective}--\ref{section:tori} contain our results on the  waist  in terms of Minkowski content of certain homogeneous spaces, mostly real and complex projective spaces and orthogonal flat tori. In Section~\ref{section:convex} we give some new results on the waists of convex bodies. In Section~\ref{section:sweepouts} we consider the waist in terms of family of cycles that sweepout a manifold (possibly with boundary). We show that the techniques of~\cite{klartag2016,ak2016ball} extend to this case, and improve the estimates for the higher waists of the cube. Section~\ref{section:hausdorff-cube} presents our results of Hausdorff-measure waists of maps from the cube to a polyhedra.

\subsection{Acknowledgements} The authors thank Bo'az Klartag for the numerous discussions and helpful remarks, including the proof of Theorem~\ref{theorem:1waist-cs}. Alfredo Hubard thanks Yashar Memarian for interesting conversations.

\section{Real projective space (and odd maps on the sphere)}
\label{section:projective} 

\subsection{Odd maps}\label{section:hausdorff-odd}

Let us start with a relatively simple result. It seems essentially known, see for example \cite{barlov1982}, where a very similar argument is used to give a lower bound on the number of vertices of a linear section of a cube, and \cite[Section 5]{guth2010}, where a similar claim is made in terms of subspaces of the real projective space.

\begin{theorem} 
\label{theorem:bu-hausdorff}
For any odd continuous map $f\colon \Sphere^n \to \R^k$, 
\[
\haus^{n-k}(f^{-1}(0))\geq \haus^{n-k}(\Sphere^{n-k}).
\]
\end{theorem}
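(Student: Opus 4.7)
The plan is to combine the Borsuk--Ulam theorem with a Crofton-type integral-geometric formula on the sphere. Parametrize the family of $k$-dimensional great subspheres of $\Sphere^n$ by the Grassmannian $G_{k+1}(\R^{n+1})$ of $(k+1)$-dimensional linear subspaces of $\R^{n+1}$, equipped with its unique $O(n+1)$-invariant probability measure $\mu$. For $V\in G_{k+1}(\R^{n+1})$ write $\Sphere(V)=V\cap\Sphere^n$, which is a great $k$-subsphere of $\Sphere^n$.

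First I would apply Borsuk--Ulam slice by slice. The restriction $f|_{\Sphere(V)}\colon\Sphere(V)\to\R^k$ is an odd continuous map from a $k$-sphere to $\R^k$, so its zero set is nonempty; being antipodally symmetric, this zero set contains at least one antipodal pair of points. Therefore
\[
\#\bigl(f^{-1}(0)\cap\Sphere(V)\bigr)\ \geq\ 2\qquad\text{for every }V\in G_{k+1}(\R^{n+1}).
\]

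Next I would invoke Crofton's formula on the sphere in the form of an integral-geometric inequality: there exists a constant $c_{n,k}>0$ such that for every Borel set $X\subseteq\Sphere^n$
\[
\haus^{n-k}(X)\ \geq\ c_{n,k}\int_{G_{k+1}(\R^{n+1})} \#\bigl(X\cap\Sphere(V)\bigr)\,d\mu(V),
\]
with equality whenever $X$ is a smooth $(n-k)$-submanifold, and in particular when $X=\Sphere^{n-k}$ is an equatorial subsphere. The constant is therefore pinned down by $2c_{n,k}=\haus^{n-k}(\Sphere^{n-k})$, since a generic $V$ meets the equator $\Sphere^{n-k}$ in exactly one antipodal pair of points. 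Applying this to $X=f^{-1}(0)$ and using the pointwise bound $\geq 2$ from the previous step gives
\[
\haus^{n-k}\bigl(f^{-1}(0)\bigr)\ \geq\ 2c_{n,k}\ =\ \haus^{n-k}(\Sphere^{n-k}),
\]
which is the claim.

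The main technical point is the validity of the Crofton inequality for the set $f^{-1}(0)$, which a priori is only closed and could fail to be rectifiable. Fortunately the inequality runs in the favorable direction: integral-geometric measure defined by intersections with subspaces is always dominated by Hausdorff measure for Borel sets, a standard fact in geometric measure theory. Alternatively, one can first approximate $f$ by a smooth map in general position so that $f^{-1}(0)$ is a smooth $(n-k)$-submanifold (where Crofton holds with equality), and then pass to a limit using lower semicontinuity of $(n-k)$-Hausdorff measure together with the stability of zero sets of odd maps under perturbation.
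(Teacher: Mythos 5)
Your proposal is correct and follows essentially the same route as the paper: Borsuk--Ulam applied to each equatorial $k$-subsphere gives at least one antipodal pair of zeros, and the spherical Crofton formula together with the fact that integral-geometric measure is dominated by Hausdorff measure (Federer's structure theorem, which is exactly what the paper cites) yields the bound. The only difference is cosmetic: the paper handles the possibly unrectifiable zero set via the dichotomy in Federer's theorem rather than your (less reliable) alternative of smoothing $f$, but your primary argument does not need that alternative.
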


Thus, for odd maps, there is a tight Hausdorff measure estimate for the waist of the sphere.
 
\begin{proof}
Let $E\subset \mathbb S^n$ be any equatorial subsphere of dimension $k$. By the Borsuk--Ulam theorem the restriction $f|_E : E\to \mathbb R^k$ has a zero, that is $f^{-1}(0)\cap E\neq \emptyset$. In fact, $f^{-1}(0)\cap E$ contains at least two opposite points. Now recall Crofton's formula\footnote{Crofton's formula provides $\mathcal I^{n-k}(Z)=\vol_{n-k}(Z)$ when $Z$ is rectifiable. In our context the formula provides the definition of the integral-geometric volume.}, 
\[
\mathcal I^{n-k}(f^{-1}(0)):= \frac{1}{2}{\vol_{n-k}(\Sphere^{n-k})} \int_{E\in G(k+1,n+1)} \#(E \cap f^{-1}(0)) d\mu(E)\geq \vol_{n-k}(\Sphere^{n-k}),
\] 
here $E$ is an equatorial subsphere of dimension $k$, $\mu$ is the probability measure on such subspheres of $\Sphere^n$ that is invariant under the action of the group $SO(n+1)$ on $\Sphere^n$ (the Haar measure), and $\#(E \cap f^{-1}(0))$ denotes the number of intersection points between $E$ and $f^{-1}(0)$. 

Now we recall that Federer's structure theorem~\cite[Theorems 3.17 and 3.16]{morgan2009} (referring to~\cite[Theorem 3.2.26 and 3.3.13]{federer1969}) implies that either the Hausdorff measure $\haus^{n-k} (f^{-1}(0))$ is infinite or 
$$
\haus^{n-k} (f^{-1}(0)) \ge \mathcal I^{n-k} (f^{-1}(0)) \ge  I^{n-k} (\mathbb S^{n-k})= \haus^{n-k} (\mathbb S^{n-k}).
$$
\end{proof}

\begin{remark}
In the above argument it is possible to take any $k$-dimensional manifold in place of $\mathbb R^k$, for $k<n$, in the standard manner. We refer to~\cite{karvol2013} for the details. The case of an arbitrary $k$-dimensional polyhedron on the right hand side is not clear.
\end{remark}

\begin{remark}
The same argument substituting the integral geometry volume by the estimate on the volume coming from the systolic inequality gives the following more general statement (except for the sharp quantitative dependence): For any  $\ZZ_2$-invariant metric $m$ on the sphere $\Sphere^n$ and any piecewise smooth $\ZZ_2$-continuous map $f\colon \Sphere^n \to \mathbb R^k$, we have 
$$
\vol_{n-k}(f^{-1}(0))\geq \epsilon'_{n,k} \sys(m)^{n-k}.
$$
It is conjectured that the extremal metric for the systolic inequality on the projective space is given by the standard round metric.   
In the rest of the paper we do not make allusion to non-canonical metrics. We always refer to the standard round metric on the sphere or its $\ZZ_2$ quotient and to the Euclidean metric on the cube.
\end{remark}

\subsection{The Borsuk--Ulam--Crofton approach to Vaaler's theorem}

Let us make an observation about the usefulness of Theorem~\ref{theorem:bu-hausdorff}; here we apply it to the maps that have smooth $f^{-1}(0)$ and may therefore speak about the Riemannian volume instead of the Hausdorff measure. The idea is to use the Borsuk--Ulam--Crofton argument to establish a version of the waist theorem for a radially symmetric measure in $\mathbb R^n$ (we particularly need the case of the Gaussian measure); and then use the (folklore) trick of transporting the Gaussian measure to the unit cube to infer Vaaler's theorem.

Assume we have a density $\rho : \mathbb R^n\to\mathbb R^+$, which only depends on the distance from the origin. We will actually discuss the $\rho$-weighted version of the Riemannian volume, given by integrating this density by the Riemannian volume measure.

\begin{lemma}
\label{lemma:odd-radial}
Let $\rho$ be a radially symmetric density in $\mathbb R^n$. For any odd continuous map $f : \mathbb R^n \to \mathbb R^k$, if the set $Z = f^{-1}(0)$ is a (piece-wise) smooth $(n-k)$-dimensional manifold then we estimate its $\rho$-weighted Riemannian $(n-k)$-volume as 
\[
\vol_{n-k}^\rho Z \ge \vol_{n-k}^\rho \mathbb R^{n-k},
\]
where we mean a standard linear subspace $\mathbb R^k\subset\mathbb R^n$.
\end{lemma}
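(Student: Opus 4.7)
\noindent\emph{Proof plan.} The plan is to foliate $\R^n\setminus\{0\}$ by the concentric spheres $r\Sphere^{n-1}$, apply the sphere statement (Theorem~\ref{theorem:bu-hausdorff}) to the restriction of $f$ to each leaf, and reassemble the fibrewise estimates using the coarea formula for the radial function. This reduces the radial Euclidean problem to the already established sphere problem rather than redoing Borsuk--Ulam--Crofton inside $\R^n$.

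First I would observe that for every $r>0$ the rescaled map $f_r\colon\Sphere^{n-1}\to\R^k$ defined by $f_r(\xi):=f(r\xi)$ is again odd, while the homothety $\xi\mapsto r\xi$ identifies $f_r^{-1}(0)$ with $Z\cap r\Sphere^{n-1}$ and scales $(n-k-1)$-dimensional volume by $r^{n-k-1}$. Theorem~\ref{theorem:bu-hausdorff} applied to $f_r$ combined with this rescaling therefore yields the fibrewise lower bound
\[
\haus^{n-k-1}\bigl(Z\cap r\Sphere^{n-1}\bigr)\;\ge\;r^{n-k-1}\,\vol_{n-k-1}(\Sphere^{n-k-1}).
\]

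Next I would apply the coarea formula on the piecewise smooth submanifold $Z$ with the $1$-Lipschitz function $u(x):=|x|$. Since $|\nabla u|\equiv 1$ on $\R^n\setminus\{0\}$, its tangential part $\nabla^Z u$ satisfies $|\nabla^Z u|\le 1$ pointwise, so
\[
\int_Z\rho(|x|)\,d\vol_{n-k}(x)=\int_0^\infty\rho(r)\int_{Z\cap r\Sphere^{n-1}}\frac{d\vol_{n-k-1}}{|\nabla^Z u|}\,dr\;\ge\;\int_0^\infty\rho(r)\,\vol_{n-k-1}\bigl(Z\cap r\Sphere^{n-1}\bigr)\,dr.
\]
Substituting the fibrewise bound and recognising the resulting integral as $\int_{\R^{n-k}}\rho(|y|)\,dy$ written in polar coordinates gives exactly the desired inequality $\vol_{n-k}^\rho(Z)\ge\vol_{n-k}^\rho(\R^{n-k})$.

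The only delicate point I anticipate is the interface between the piecewise-smoothness hypothesis on $Z$ and the fibrewise use of the sphere statement: I need $Z\cap r\Sphere^{n-1}$ to be a set to which the Hausdorff measure conclusion of Theorem~\ref{theorem:bu-hausdorff} applies. By Sard's theorem applied to $u|_Z$, the value $r$ is a regular value on each smooth stratum of $Z$ for a.e.\ $r$, and the coarea integral only sees such $r$; the singular origin and the measure-zero set of critical values do not contribute. This is the standard regularity caveat and not a genuine obstacle.
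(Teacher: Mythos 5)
Your proof is correct and follows essentially the same route as the paper: slice $Z$ by the concentric spheres $r\Sphere^{n-1}$, apply Theorem~\ref{theorem:bu-hausdorff} to the (odd) restriction of $f$ to each sphere to get $\haus^{n-k-1}(Z\cap r\Sphere^{n-1})\ge r^{n-k-1}\vol_{n-k-1}(\Sphere^{n-k-1})$, and integrate in $r$. The only difference is that you spell out the integration step via the coarea formula with $u(x)=|x|$ and the observation $|\nabla^Z u|\le 1$, which is exactly the justification implicit in the paper's ``integrating this we easily obtain.''
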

\begin{proof}
For any sphere $S_r^{n-1}$ of radius $r$ centered at the origin, the Riemannian $(n-k-1)$-volume of $Z\cap S_r^n$ is at least $\vol_{n-k-1} \mathbb S^{n-k-1} r^{n-k-1}$ by Theorem~\ref{theorem:bu-hausdorff}. Integrating this we easily obtain:
\[
\vol_{n-k}^\rho Z := \int_Z \rho(x) \vol_{n-k} \ge \int_0^{+\infty} \vol_{n-k-1} \mathbb S^{n-k-1} \rho(r) r^{n-k-1}\; dr = \vol_{n-k}^\rho \mathbb R^k.
\]
\end{proof}

For example, taking the density $\rho(x) = e^{-\pi |x|^2}$, we have the estimate $\vol_{n-k}^\rho Z \ge 1$ for the (piece-wise smooth) zero set of any odd map $\mathbb R^n\to\mathbb R^k$, compare this with a much harder result for zeros of holomorphic maps in~\cite{klartag2017}. Now we show that this simple approach also works in some cases without radial symmetry.

\begin{theorem}
\label{theorem:non-linear-vaaler-cube}
Let $g : (-1/2, 1/2)^n \to \mathbb R^k$ be an odd continuous map, such that $Z = g^{-1}(0)$ is piece-wise smooth and $(n-k)$-dimensional. Then $\vol_{n-k} Z \ge 1$.
\end{theorem}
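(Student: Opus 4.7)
The plan is to push the Gaussian estimate of Lemma~\ref{lemma:odd-radial} onto the cube via the transport trick alluded to just before the statement. Let $\phi\colon\R\to(-1/2,1/2)$ be the unique odd increasing diffeomorphism with $\phi'(t)=e^{-\pi t^2}$, and set $\Phi(x_1,\dots,x_n):=(\phi(x_1),\dots,\phi(x_n))$. Then $\Phi$ is a smooth odd diffeomorphism $\R^n\to(-1/2,1/2)^n$, so $f:=g\circ\Phi$ is a continuous odd map $\R^n\to\R^k$ with $f^{-1}(0)=\Phi^{-1}(Z)$ piece-wise smooth and $(n-k)$-dimensional. Applying Lemma~\ref{lemma:odd-radial} with $\rho(x)=e^{-\pi|x|^2}$ gives
\[
\int_{\Phi^{-1}(Z)}e^{-\pi|x|^2}\,d\vol_{n-k}\;\ge\;\vol_{n-k}^{\rho}(\R^{n-k})=1,
\]
so it suffices to bound this Gaussian-weighted $(n-k)$-volume on $\Phi^{-1}(Z)$ by the Euclidean $(n-k)$-volume on $Z$.

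The pointwise Jacobian inequality I would verify is
\[
e^{-\pi|x|^2}\;\le\;\sqrt{\det\bigl((D\Phi(x)V)^{T}(D\Phi(x)V)\bigr)},
\]
where the columns of $V$ form an orthonormal basis of $T_x\Phi^{-1}(Z)$; integrating this over $\Phi^{-1}(Z)$ produces $\vol_{n-k}(Z)\ge 1$ and closes the argument. For the proof, I would apply the Cauchy--Binet formula to the diagonal matrix $D\Phi(x)=\mathrm{diag}(\phi'(x_1),\dots,\phi'(x_n))$ to obtain
\[
\det\bigl(V^{T}D\Phi^{T}D\Phi\,V\bigr)=\sum_{|J|=n-k}\Bigl(\prod_{l\in J}\phi'(x_l)^{2}\Bigr)(\det V_J)^{2},
\]
while $\sum_J(\det V_J)^{2}=\det(V^{T}V)=1$. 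Since each factor $\phi'(x_l)=e^{-\pi x_l^{2}}$ lies in $(0,1]$, omitting the factors indexed by $l\notin J$ only enlarges the product, so every summand dominates $e^{-2\pi|x|^{2}}(\det V_J)^{2}$; summation yields the desired bound.

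The principal subtlety to get right is the direction of the volume comparison: although $\Phi$ contracts the full $n$-dimensional Lebesgue measure by exactly the factor $\rho$, a lower-dimensional submanifold may be contracted by a considerably smaller factor, so the cancellation across $\Phi$ is not automatic. The Cauchy--Binet identity makes this precise, and it works out in the correct direction precisely because in each minor the missing factors $\phi'(x_l)\le 1$ are dropped, producing a uniformly larger quantity. Note also that although Lemma~\ref{lemma:odd-radial} requires a radially symmetric density, the target cube is not radial: radial symmetry is exploited only at the intermediate Gaussian step, and the same strategy should adapt to any axis-aligned rectangular box by choosing $\phi$ accordingly.
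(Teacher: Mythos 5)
Your proposal is correct and follows essentially the same route as the paper: transport the Gaussian to the cube by the coordinate-wise map with derivative $e^{-\pi t^2}$, apply Lemma~\ref{lemma:odd-radial}, and verify the pointwise inequality that the tangential Jacobian of the transport map on any $(n-k)$-dimensional tangent subspace is at least the Gaussian density. The only (minor) difference is in that last verification: the paper proves it for a general $1$-Lipschitz map via Cauchy eigenvalue interlacing, whereas you exploit the diagonal form of $D\Phi$ through Cauchy--Binet; both are valid, and your computation is correct.
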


\begin{proof}
Let us transform the Euclidean space with the Gaussian density to the cube with the uniform density, using the folklore trick that is mentioned in~\cite[Remark before Section 4]{barthemaurey2000}, \cite[Theorem 7]{ros2001}, and applied to the waist in~\cite{klartag2016}. Put 
\[
y(x) = \int_{0}^x e^{-\pi \xi^2}\; d\xi.
\]
This function maps $\mathbb R$ to $(-1/2, 1/2)$, has derivative at most $1$ everywhere, and the push-forward of the measure with density $e^{-\pi x^2}$ is the uniform measure on $(-1/2, 1/2)$. The map
\[
T : \mathbb R^n \to (-1/2, 1/2)^n,\quad T(x_1,\ldots,x_n) = (y(x_1), \ldots, y(x_n))
\]
is $1$-Lipschitz (does not increase the distances between pairs of points) and the push-forward of the Gaussian measure under $T$ is evidently the uniform measure on the open cube $(-1/2, 1/2)^n$.

Consider the composition $f = g\circ T : \mathbb R^n \to \mathbb R^k$ and the set $Z' = f^{-1}(0)$, then $T(Z') = Z$. Since $f$ is an odd map, we have by the above argument for $Z'$ that $\vol_{n-k}^\rho Z'\ge 1$. It remains to examine how $T$ transforms the $(n-k)$-dimensional Riemannian volume.

The fact that $T$ transforms the Gaussian measure to the uniform measure in the cube is expressed with $\det DT = \rho$. In order to check the transform of the $(n-k)$-dimensional Riemannian volume we consider an $(n-k)$-dimensional subspace $L$ in the tangent space. 

Since all eigenvalues of $DT$ are no greater than $1$ (this is a Lipschitz map), its determinant cannot decrease when going to the subspace, that is $\det DT|_L \ge \rho$.\footnote{Here we consider the determinant of a map $L : V\to W$ between Euclidean spaces of different dimension as $\sqrt{\det L^* L}$ assuming the identifications $W=W^*$ and $V=V^*$ under the Euclidean structure.} Indeed, when considering the quadratic form given by $DT^* DT$ on a linear space $V$, we may write down the eigenvalues $0 \le \lambda_1 \le \dots \le \lambda_\ell\le 1$. Passing to a subspace of one dimension smaller, we consider the new eigenvalues $\mu_1,\ldots,\mu_{\ell-1}$. Using the formula for the ordered eigenvalues of a quadratic form $Q:V\to \mathbb R$ on e Euclidean space $V$
\[
\lambda_k(Q) = \min_{W\subset V,\ \dim W = k} \left( \max_{w\in W\setminus 0} \frac{Q(w)}{|w|^2} \right)
\]
we readily obtain the inequalities connecting the original eigenvalues and the eigenvalues of a restriction:
\[
0\le \lambda_1 \le \mu_1 \le \lambda_2 \le \mu_2 \le \dots \le \mu_{\ell-1}\le \lambda_\ell \le 1.
\]
From this it follows that 
\[
\mu_1\dots \mu_{\ell-1} \ge \lambda_1\dots\lambda_{\ell-1} \ge \lambda_1\dots\lambda_{\ell-1}\lambda_\ell
\]
and therefore $\det DT^* DT$ cannot decrease when restricting to a subspace.

The inequality $\det DT|_L \ge \rho$ means that the resulting $(n-k)$-volume is no smaller that the original $\rho$-weighted $(n-k)$-volume, showing that $\vol_{n-k} Z \ge 1$.
\end{proof}

A particular case of the above theorem is Vaaler's theorem~\cite{vaaler1979} about sections of a cube: 

\begin{corollary}[Vaaler, 1979]
Every $(n-k)$-dimensional linear section of the unit cube $(-1/2,1/2)^n$ has Riemannian $(n-k)$-volume at least $1$. 
\end{corollary}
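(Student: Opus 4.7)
The plan is to observe that this corollary is almost immediate from Theorem~\ref{theorem:non-linear-vaaler-cube} once one realizes that a linear section of the cube is precisely the zero set of a linear odd map. Given a linear subspace $L \subset \mathbb R^n$ of dimension $n-k$, I would pick the linear projection $g : \mathbb R^n \to L^\perp \cong \mathbb R^k$ and restrict it to the open cube $(-1/2, 1/2)^n$. Being linear, $g$ is automatically odd and smooth, and its zero set inside the cube is exactly the relatively open linear section $L \cap (-1/2, 1/2)^n$, which is a smooth $(n-k)$-dimensional manifold (an affine piece of $L$).

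Applying Theorem~\ref{theorem:non-linear-vaaler-cube} to this $g$ then yields $\vol_{n-k}\bigl(L \cap (-1/2, 1/2)^n\bigr) \ge 1$. The only remaining point is to pass from the open cube to the closed one: since $L \cap \partial [-1/2, 1/2]^n$ has $(n-k)$-dimensional Hausdorff measure zero (it is contained in a finite union of affine subspaces of dimension at most $n-k-1$, assuming $L$ is in general position; otherwise one argues by an arbitrarily small perturbation of $L$ and lower semicontinuity of the volume), the closed and open sections have the same $(n-k)$-volume.

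I do not foresee any serious obstacle here, as the real work has already been done in Theorem~\ref{theorem:non-linear-vaaler-cube}; the only minor subtlety is making sure one is allowed to take $g$ linear (odd continuous is enough, and linear is a special case) and to move between open and closed cube without changing the measure.
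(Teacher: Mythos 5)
Your proposal is correct and is exactly the argument the paper intends: the corollary is stated as "a particular case of the above theorem," namely Theorem~\ref{theorem:non-linear-vaaler-cube} applied to the linear projection with kernel $L$, which is odd with zero set $L\cap(-1/2,1/2)^n$. Note that the corollary as stated already concerns the open cube $(-1/2,1/2)^n$, so your open-versus-closed discussion, while harmless, is not needed.
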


Compare the above argument to~\cite{klartag2016}, where Vaaler's theorem is inferred from Gromov's waist theorem for the Gaussian measure. The approach presented here avoids the use of hard Gromov's waist theorem, using the Borsuk--Ulam theorem and Crofton's formula instead.

In fact, the full version of Vaaler's theorem gives an estimate for a linear section of a Cartesian product of Euclidean balls of arbitrary dimensions, but all of unit volume. The Borsuk--Ulam--Crofton approach produces the corresponding generalization:

\begin{theorem}
\label{theorem:non-linear-vaaler-balls}
Let $g : K \to \mathbb R^k$ be an odd continuous map, such that $Z = g^{-1}(0)$ is piece-wise smooth and $(n-k)$-dimensional, where $K\subset \mathbb R^n$ is a Cartesian product $K=B^{n_1}\times\dots\times B^{n_k}$ of open Euclidean balls, each $B^{n_i}$ having $n_i$-volume $1$. Then $\vol_{n-k} Z \ge 1$.
\end{theorem}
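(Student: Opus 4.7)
The plan is to mimic the proof of Theorem~\ref{theorem:non-linear-vaaler-cube}, replacing the one-dimensional transport $y(x)=\int_0^x e^{-\pi\xi^2}\,d\xi$ used there (applied coordinate by coordinate) by a radial transport on each ball factor. For each $i$, let $r_i$ denote the radius of $B^{n_i}$, so $v_{n_i}r_i^{n_i}=1$, and define a radial map $T_i:\mathbb{R}^{n_i}\to B^{n_i}$ by $T_i(x)=\phi_i(|x|)\,x/|x|$, where $\phi_i:[0,\infty)\to[0,r_i)$ is determined by the Jacobian identity $\phi_i'(r)(\phi_i(r)/r)^{n_i-1}=e^{-\pi r^2}$. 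This identity guarantees that $T_i$ pushes the Gaussian density $e^{-\pi|x|^2}\,dx$ onto the uniform density on $B^{n_i}$; it integrates explicitly to $\phi_i(r)^{n_i}=n_i\int_0^r e^{-\pi s^2}s^{n_i-1}\,ds$, and the standard Gaussian moment computation shows $\phi_i(r)\to r_i$ as $r\to\infty$, so $T_i$ is a smooth odd radial bijection onto the open ball.

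The product map $T=T_1\times\cdots\times T_k:\mathbb{R}^n\to K$ is then an odd diffeomorphism pushing the radially symmetric density $\rho(x)=e^{-\pi|x|^2}$ onto the uniform density on $K$. To show that $T$ is $1$-Lipschitz it suffices to verify each $T_i$, which for a radial map amounts to the two pointwise bounds $\phi_i(r)/r\le 1$ and $\phi_i'(r)\le 1$. The first is immediate from the trivial estimate $\phi_i(r)^{n_i}\le n_i\int_0^r s^{n_i-1}\,ds=r^{n_i}$; the second follows by first using $e^{-\pi s^2}\ge e^{-\pi r^2}$ on $[0,r]$ to obtain the a~priori lower bound $\phi_i(r)\ge r\,e^{-\pi r^2/n_i}$, and then substituting into $\phi_i'(r)=e^{-\pi r^2}(r/\phi_i(r))^{n_i-1}$ to get $\phi_i'(r)\le e^{-\pi r^2/n_i}\le 1$.

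With $T$ in hand, the rest of the argument copies Theorem~\ref{theorem:non-linear-vaaler-cube} verbatim. The composition $f=g\circ T:\mathbb{R}^n\to\mathbb{R}^k$ is odd, and its zero set $Z'=T^{-1}(Z)$ is piecewise smooth of dimension $n-k$, so Lemma~\ref{lemma:odd-radial} yields $\vol_{n-k}^{\rho}(Z')\ge\vol_{n-k}^{\rho}(\mathbb{R}^{n-k})=1$. Since $T$ is a bijection, so is $T|_{Z'}:Z'\to Z$; and because every eigenvalue of $DT^*DT$ lies in $[0,1]$ with product equal to $\rho^2$, the same Cauchy interlacing argument used in Theorem~\ref{theorem:non-linear-vaaler-cube} gives $\det DT|_L\ge\rho$ for every $(n-k)$-dimensional tangent subspace $L$. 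The area formula then delivers
\[
\vol_{n-k}(Z)=\int_{Z'}\det DT|_L\,dV_{n-k}\;\ge\;\int_{Z'}\rho\,dV_{n-k}=\vol_{n-k}^{\rho}(Z')\;\ge\;1.
\]

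The only step that is not pure formality once the setup is written down is the $1$-Lipschitz verification for the radial transport $T_i$, which hinges on the lower bound $\phi_i(r)\ge r\,e^{-\pi r^2/n_i}$. Everything else, including the eigenvalue-interlacing estimate and the way the Lipschitz property is leveraged via the area formula, transports directly from the cube case.
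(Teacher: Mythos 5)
Your proposal is correct and follows essentially the same route as the paper: the same radial transport $\phi_i(r)^{n_i}=n_i\int_0^r e^{-\pi s^2}s^{n_i-1}\,ds$ on each ball factor, the same two-part $1$-Lipschitz verification (tangential via $\phi_i(r)\le r$, radial via $e^{-\pi s^2}\ge e^{-\pi r^2}$ on $[0,r]$), and the same reduction to the Borsuk--Ulam--Crofton argument of Theorem~\ref{theorem:non-linear-vaaler-cube}. Your intermediate bound $\phi_i(r)\ge r\,e^{-\pi r^2/n_i}$ is just a slightly repackaged form of the paper's chain of inequalities for $y'_x$.
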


\begin{proof}
The scheme of the proof is the same as in Theorem~\ref{theorem:non-linear-vaaler-cube}; we only need to transport the Gaussian measure with density $e^{-\pi |x|^2}$ on $\mathbb R^{n_i}$ to the uniform measure in a unit volume ball $B^{n_i}$ with a map $T_i$. This transportation has to be radially symmetric and the corresponding map of the radial coordinate $x\mapsto y$ is determined by the equation
\[
\int_0^y nr^{n-1} dr = \int_0^x nr^{n-1} e^{-\pi r^2} dr,
\]
or a bit more explicitly
\[
y^n = \int_0^x nr^{n-1} e^{-\pi r^2} dr.
\]
It is important for this map to be $1$-Lipschitz. The Lipschitz property in directions orthogonal to the radial direction is guaranteed by the trivial observation $y\le x$. The radial direction requires differentiation of the defining relation to give
\[
y'_x = \frac{nx^{n-1} e^{-\pi x^2}}{ny^{n-1}} = \frac{yx^{n-1} e^{-\pi x^2}}{\int_0^x e^{-\pi r^2} d r^n} \le \frac{x^n e^{-\pi x^2}}{\int_0^x e^{-\pi r^2} d r^n} \le \frac{x^n e^{-\pi x^2}}{\int_0^x e^{-\pi x^2} d r^n} = 1,
\]
where we used $y\le x$ and $e^{-\pi r^2}\ge e^{-\pi x^2}$ (for $r\in [0, x]$) in the estimates.

After that we consider $T = T_1\times\dots \times T_k$, which $1$-Lipschitz transports the Gaussian measure to the uniform measure in $K$, apply the Borsuk--Ulam--Crofton argument to $g\circ T$ precisely as in Theorem~\ref{theorem:non-linear-vaaler-cube}.
\end{proof}

Note that the estimate of Theorem \ref{theorem:non-linear-vaaler-balls} for the product of balls is not tight, in particular, it is not tight in the case of a single ball. Of course, for odd maps from the single unit ball Lemma~\ref{lemma:odd-radial} gives a tight estimate, and for arbitrary maps the estimate (for the Minkowski content of a fiber) is produced in~\cite{ak2016ball} by considering the natural (Archimedes) projection $\mathbb S^{n+1}\to B^n$ and using the fact that it pushes forward the uniform measure on the sphere to the uniform measure on the ball in a $1$-Lipschitz way.

\subsection{Even maps}

To make one further step we consider even maps $g : \mathbb S^n \to Y$, where $Y$ is a $k$-dimensional polyhedron. Of course, this map factors through a map $f : \mathbb RP^n \to Y$. So the question for even maps is equivalent to the question of the waist inequality for the projective space. 

In~\cite{kar2012} it was shown, in particular, that for 
$$
m = \left\lfloor \frac{n}{k+1}\right\rfloor,
$$
there exists $y\in Y$ such that $t^m$ does not vanish in the \v{C}ech cohomology of $f^{-1}(y)$, where $t\in H^1(\mathbb RP^n; \mathbb F_2)$ is the cohomology generator. This in the standard way enables the Borsuk--Ulam property for odd continuous maps $g^{-1}(y) \to \mathbb R^m$, but the problem is that $m$ is usually much smaller than the expected dimension of $f^{-1}(y)$, $n-k$. The equality $m=n-k$ only holds when $k=n-1$. In this case we have: 

\begin{theorem}
Let $g \colon \mathbb RP^n\to Y$ be a continuous map to a polyhedron dropping the dimension by one. Then for some $y\in Y$ the length of the fiber $g^{-1}(y)$ is at least $\pi$.
\end{theorem}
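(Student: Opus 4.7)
My plan is to imitate the structure of the proof of Theorem~\ref{theorem:bu-hausdorff}, using the cohomological waist inequality of~\cite{kar2012} quoted immediately above in place of the Borsuk--Ulam theorem, and combining it with a Crofton-type formula on $\mathbb{RP}^n$.

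First I would feed the quoted result into the situation at hand. Taking $k = n-1$ gives $m = \lfloor n/n\rfloor = 1$, so there is a $y \in Y$ for which the generator $t \in H^1(\mathbb{RP}^n;\mathbb F_2)$ restricts to a nonzero class in $\check H^1(F;\mathbb F_2)$, where $F = g^{-1}(y)$. This is the topological input.

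The geometric consequence I would then extract is that $F$ must meet every projective hyperplane $H \cong \mathbb{RP}^{n-1}\subset \mathbb{RP}^n$. Indeed, suppose $F\cap H = \emptyset$; then $F$ is contained in the affine chart $\mathbb{RP}^n \setminus H \cong \mathbb R^n$, and by functoriality of \v{C}ech cohomology the restriction $\check H^1(\mathbb{RP}^n;\mathbb F_2) \to \check H^1(F;\mathbb F_2)$ factors through $\check H^1(\mathbb R^n;\mathbb F_2) = 0$, contradicting $t|_F \ne 0$.

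With this intersection property in hand, I would apply the Crofton formula for $1$-dimensional sets in $\mathbb{RP}^n$. If $\mu$ is the $SO(n+1)$-invariant probability measure on the Grassmannian of projective hyperplanes, normalized so that a projective line (of intrinsic length $\pi$) realizes the equality case, then
\[
\mathcal I^1(F) = \pi \int \#(F\cap H)\, d\mu(H) \geq \pi,
\]
since $\#(F\cap H)\ge 1$ for every $H$. Federer's structure theorem, invoked exactly as at the end of Theorem~\ref{theorem:bu-hausdorff}, then upgrades this to $\haus^1(F) \geq \pi$.

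The main obstacle is the intermediate step: one must work in \v{C}ech cohomology (since $F$ is an arbitrary compact fiber of a continuous map) and check carefully that the inclusion into the contractible affine chart kills $t$ in that setting. The Crofton step and the passage from $\mathcal I^1$ to $\haus^1$ proceed in full analogy with Theorem~\ref{theorem:bu-hausdorff}.
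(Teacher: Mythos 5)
Your proposal is correct and follows essentially the same route as the paper: the topological input from~\cite{kar2012} with $m=1$, the observation that a fiber carrying the nonzero class $t$ must meet every hyperplane (equivalently, its lift to $\mathbb S^n$ meets every equatorial $\mathbb S^{n-1}$, which is how the paper phrases it before applying the spherical Crofton formula and dividing by two), and the Crofton--Federer conclusion. The only cosmetic difference is that you run Crofton directly on $\mathbb{RP}^n$ rather than lifting to the sphere, and you spell out the contractible-affine-chart justification that the paper leaves implicit.
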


\begin{proof}
From~\cite{kar2012} we find a fiber $g^{-1}(y)$ such that the restriction of the generator of $H^1(\mathbb RP^n; \mathbb F_2)$ to $g^{-1}(y)$ is nonzero. Therefore its lift to $S^n$, $f^{-1}(y)$, intersects every rotated image of $S^{n-1}$. The Crofton formula then implies that the $1$-Hausdorff measure of $f^{-1}(y)$ is at least $2\pi$ and the $1$-Hausdorff measure of $g^{-1}(y)$ is at least $\pi$.
\end{proof}

\begin{remark}
This estimate is attained for the Hopf map $S^{2n+1}\to \mathbb CP^n$ and its quotient $\mathbb RP^{2n+1}\to \mathbb CP^n$. What about even-dimensional spheres?
\end{remark}

It seems that the same estimate on the $1$-dimensional Hausdorff measure for $n=k+1$ should hold for not necessarily even continuous maps $f : \mathbb S^n \to Y$ with $\dim Y = n-1$. Following~\cite[Theorem~7.3]{akv2012}, we can use Alexandrov's width theorem to find $y\in Y$ such that $f^{-1}(y)$ is not contained in an open hemisphere of $\mathbb S^n$, this is sufficient to conclude that 
$$
\haus^1(f^{-1}(y)) \ge \mathcal I^1(f^{-1}(y)) \ge \pi,
$$
but this estimate is twice smaller than what we expect. Notice that while the previous result for odd maps is tight\footnote{The restriction of a linear projection $f\colon \mathbb S^n \to \mathbb R^k$ is an odd map.}, the inequality 
$\haus^1(f^{-1}(y)) \ge 2\pi$ seems to be not tight. Constructing a map with small waist is not immediate.  In fact, we can prove something particular under certain regularity assumptions:

\begin{theorem}
\label{theorem:rp2functions}
Let $\mathbb RP^2\to\mathbb R$ be a real-analytic map and let $\epsilon>0$. Then for some $y\in\mathbb R$ the length of the fiber $f^{-1}(y)$ is at least $2\pi - \epsilon$.
\end{theorem}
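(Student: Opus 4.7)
The plan is to use the preceding theorem to produce an essential fiber at some critical value $y_0$, and then to exploit the real-analytic structure and the topological constraint that regular fibers are null-homologous to show that nearby regular fibers have nearly twice the length of the essential component.

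First I would apply the Borsuk--Ulam--Crofton argument as in the preceding theorem to obtain $y_0 \in \R$ and a non-contractible simple closed (sub)curve $C_0^e \subset f^{-1}(y_0) \subset \mathbb{RP}^2$ with $\haus^1(C_0^e) \ge \pi$. Lifting to $\tilde f = f \circ \pi : \Sphere^2 \to \R$, set $\tilde C_0^e := \pi^{-1}(C_0^e)$, a connected antipodally symmetric $1$-set with $\haus^1(\tilde C_0^e) = 2\haus^1(C_0^e) \ge 2\pi$. The key preliminary topological observation is that any regular fiber of $f$ bounds its sublevel set and is hence null-homologous in $H_1(\mathbb{RP}^2;\F_2)=\F_2$; since two disjoint non-contractible simple closed curves in $\mathbb{RP}^2$ must intersect, regular fibers have no non-contractible components, forcing $y_0$ to be a critical value of $f$. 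Because $f$ is real-analytic on the compact manifold $\mathbb{RP}^2$, the critical image is a compact subanalytic subset of $\R$ of measure zero, hence finite, and $y_0$ is isolated.

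Next I would perform a local analysis of $\tilde f - y_0$ near the smooth locus of $\tilde C_0^e$. A tubular neighborhood of $C_0^e$ in $\mathbb{RP}^2$ is a M\"obius band (as $C_0^e$ is non-contractible), whose double cover in $\Sphere^2$ is an annulus; parametrize it as $S^1 \times (-\delta,\delta)$ with the antipodal involution acting by $(\theta,n)\mapsto(\theta+\pi,-n)$. Expanding $\tilde f - y_0 = \sum_{j\ge 1} h_j(\theta)\,n^j$, antipodal invariance gives $h_j(\theta+\pi)=(-1)^j h_j(\theta)$. Let $k$ be the smallest index with $h_k\not\equiv 0$. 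I claim $k\ge 2$ and $k$ is even: if $k=1$, then $\tilde C_0^e$ would consist of regular points of $\tilde f$ and would extend smoothly by the implicit function theorem to a regular fiber, producing an essential regular fiber; and if $k$ were odd, $h_k$ would be $\pi$-antiperiodic and therefore vanish on a nonempty finite set $Z\subset S^1$, so the ``upper'' and ``lower'' fiber sheets on the arcs of $S^1\setminus Z$ would join through the points of $Z$ into a single antipodally-invariant (hence essential) closed curve in a nearby regular fiber --- both outcomes forbidden by the previous paragraph. Hence $k$ is even, $\tilde f - y_0$ has a definite sign in a neighborhood of $\tilde C_0^e$, and $\tilde C_0^e$ is a local extremum level set of $\tilde f$. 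It then follows that for $y$ on the appropriate side of $y_0$, the level set $\tilde f^{-1}(y)$ in the tubular neighborhood consists of two nearly parallel sheets at normal distance $\sim |y-y_0|^{1/k}$ from $\tilde C_0^e$, one on each side; by the involution rule these two sheets form an antipodal pair in $\Sphere^2$, and so project in $\mathbb{RP}^2$ to a single smooth closed curve of length tending to $\haus^1(\tilde C_0^e)=2\haus^1(C_0^e)\ge 2\pi$ as $y\to y_0$. Choosing $y$ sufficiently close to $y_0$ then gives $\haus^1(f^{-1}(y))\ge 2\pi-\epsilon$.

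The hardest part will be handling the singular points of $\tilde C_0^e$ (self-intersections, cusps, and other non-smooth loci produced by the real-analytic structure of $\tilde f^{-1}(y_0)$). These form a zero-dimensional subset of $\tilde C_0^e$ and contribute nothing to $\haus^1$; the plan is to excise an arbitrarily small neighborhood of the singular set before applying the Morse--Bott-type normal form above, at the cost of an additive error absorbable into $\epsilon$.
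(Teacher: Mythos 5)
Your setup --- extracting an essential simple closed curve $C_0^e$ from some fiber, lifting it to the antipodally symmetric curve $\tilde C_0^e\subset\Sphere^2$ of length $\ge 2\pi$, and analysing nearby level sets in an annular neighbourhood --- is a legitimate alternative to the paper's global graph-and-perimeter argument, and your even-$k$ branch correctly reproduces the model example induced by $x_1^2$. The genuine gap is the step that rules out sign changes of $f-y_0$ across $C_0^e$. Your claim that for odd $k$ the upper and lower sheets would ``join through the points of $Z$ into a single antipodally-invariant (hence essential) closed curve'' is false: at a zero of $h_k$ where the sign changes, the level set $\{h_k(\theta)\,n^k\approx y-y_0\}$ does not pass through $\{n=0\}$; its normal coordinate blows up as $\theta$ approaches $Z$, so the arcs escape the tubular neighbourhood and may close up outside it into \emph{null-homologous} curves. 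The map induced on $\mathbb RP^2$ by $\tilde f=x_1x_3$ is a concrete counterexample: taking $\tilde C_0^e=\{x_3=0\}$ one has $k=1$ and $h_1=\cos\theta$, yet the nearby fibers $\{x_1x_3=c\}$ are antipodal \emph{pairs} of curves, each bounding a disc, so no contradiction with the regularity of nearby values arises and oddness of $k$ cannot be excluded. (The theorem still holds in that example only because the nearby fiber also runs along the second essential circle $\{x_1=0\}$, i.e.\ it collects length from the extra branches of $f^{-1}(y_0)$ that are forced to emanate from the sign-change points --- length your local analysis around a single $C_0^e$ never sees. Incidentally, your $k=1$ sub-claim that $\tilde C_0^e$ would consist of regular points is also incorrect, since a $\pi$-antiperiodic $h_1$ must vanish somewhere.)

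This is not a degenerate case that can be excised: whenever the sign of $f-y_0$ is not constant near $C_0^e$, the sheets on the two sides cover complementary arcs of $\tilde C_0^e$, so the nearby fiber gains only about $\haus^1(C_0^e)\ge\pi$ in $\mathbb RP^2$ from this neighbourhood, and the missing $\pi$ must be recovered from the other edges and components of $f^{-1}(y_0)$. Supplying that accounting is precisely the content of the paper's proof, which treats $f^{-1}(0)$ as a graph on the hemisphere bounded by $\tilde\gamma$, compares the perimeter sums of the positive and negative complementary components, and concludes via a dichotomy: either some component contains two antipodal boundary points (perimeter $\ge 2\pi$ directly), or the augmented graph $\tilde\gamma\cup\partial P\cup-\partial P$ meets every great circle at least four times and Crofton's formula applies. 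Your proposal needs an argument of this kind for the mixed-sign case; the same issue undermines the closing ``excise the singular set'' step, because the points you propose to excise are exactly where the sign and sheet structure degenerate, and their effect on the limiting fiber length is a loss of order $\haus^1(C_0^e)$, not an additive error absorbable into $\epsilon$.
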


\begin{proof}[Proof of Theorem~\ref{theorem:rp2functions}]
As in the previous argument (referring to~\cite{kar2012}) there exists $y$ such that $f^{-1}(y)$ carries a nontrivial homology of $H_1(\mathbb RP^2;\mathbb F_2)$. Let this $y$ be equal to $0$ without loss of generality. Under the assumption that $f$ is real-analytic, the set $f^{-1}(0)$ is 

\begin{itemize}
\item
either $2$-dimensional; then its length ($1$-Hausdorff measure) is infinite and we are done;
\item
or a $1$-dimensional graph, thus we restrict the argument to this case.
\end{itemize}

This graph has a simple cycle $\gamma$ representing a nontrivial homology of $\mathbb RP^2$. Lifting the picture to $\mathbb S^2$ we see that the lift of $\gamma$, $\tilde \gamma$, breaks it into two equal parts $N$ and $S$, interchanged by the map $x\mapsto -x$. From here on we consider $N$ with its boundary $\tilde \gamma$ instead of $\mathbb RP^2$ and the graph $G$ that $f^{-1}(0)$ induces on $N$ and its boundary.

The graph $G$ breaks $N$ into open connected components, on every one of them the function $f$ is either positive or negative. If we consider $f^{-1}(y)$ for a small positive $y$ then its length will tend to the sum of perimeters of the positive components. If we consider $f^{-1}(y)$ for a small by absolute value negative $y$ then its length will tend to the sum of perimeters of the negative components. Hence we are going to prove that either of the sums of perimeters is $\ge 2\pi$.

Assuming the contrary, that both sums of the perimeters are $<2\pi$, let us make some reductions. If an edge of $G$ separates two components with the same sign, positive or negative, just remove it, making the sum of the perimeters less. If a vertex of $G$ has degree greater than $3$ then perturb it without changing the components so that it splits into a set of vertices of degree at most $3$. Such a perturbation can be made so that to keep both the perimeter sums $<2\pi$.

After the reduction process no vertex will have degree greater than $3$. In fact, no vertex in the interior of $N$ will have degree $3$, just because in this case some of the edges from this vertex separates the components of the same sign and can be removed. Eventually the topological picture becomes very special: The edges of $G$ are either parts of $\tilde\gamma$, or edges in the interior of $N$ from one point of $\tilde\gamma$ to another point of $\tilde\gamma$, or loops inside $N$. If there is a loop we may just drop it with everything in its interior, so we assume we only have $\tilde\gamma$ and several pairwise non-intersecting chords of $\tilde\gamma$.

The curve $\tilde\gamma$ has the involution $x\mapsto -x$ induced from the sphere $\mathbb S^2$ and $\tilde\gamma$ with this involution can be viewed (more precisely: equivariantly homeomorphic) as the ordinary $1$-sphere $\mathbb S^1\subset\mathbb R^2$ with the involution $x\mapsto -x$. Let us call the points $x$ and $-x$ interchanged by this involution \emph{antipodal}. Call a segment of $\tilde\gamma$ \emph{short} if it contains no pair of opposite points.

Now we have two alternatives (resembling the Alexandrov width theorem):

\begin{itemize}
\item
Some closure of a component of the complement to $G$, let it be $P$, contains two antipodal points $a,b\in\tilde\gamma$. Consider this situation on the sphere $\mathbb S^2$ and note that the boundary $\partial P$ contains two antipodal points and therefore its two parts, passing from $a$ to $b$, and from $b$ to $a$ both have length $\ge \pi$. So the total perimeter of $P_i$ is at least $2\pi$ and we are done.
\item
Some closure of the component, let it be $P$ again, intersects $\tilde\gamma$ by a collection of segments $I_1,\ldots, I_m$ so that the complement $\tilde\gamma\setminus P$ consists of short segments $J_1,\ldots, J_m$. Assuming we are not in the first case, $I_i\cap -I_j = \emptyset$ for every $i$ and $j$. In this case we return to the whole sphere $\mathbb S^2$ and note that $f^{-1}(0)$ contains the graph $G' = \tilde\gamma\cup \partial P \cup -\partial P$ and $G'$ has the property that every equatorial subpshere $\mathbb S^1\subset \mathbb S^2$ intersects it at least four times. Indeed, if $\mathbb S^1$ intersects $\tilde\gamma$ four times then we are done. Otherwise it intersects $\tilde\gamma$ twice (because $\tilde\gamma$ it is centrally symmetric and there must be an intersection) and must also intersect some pair $J_i, -J_i$; to the latter pair their correspond two edges of $G'\setminus \tilde\gamma$ that are also intersected by $\mathbb S^1$. Applying the Crofton formula, we conclude that $f^{-1}(0)$ itself has length at least $2\pi$.
\end{itemize}
\end{proof}

\begin{remark}
The map $S^2\to \mathbb R$ given by the square of the coordinate $g(x_1,x_2,x_3) = x_1^2$ induces a map $f:\mathbb RP^2\to \mathbb R$ showing that the estimate in the previous theorem cannot be improved. Note that the standard method of establishing waist estimates for real valued functions by taking the median $m$ of $f$ and applying the isoperimetric inequality to the set $\{f(x)\le m\}$ does not give the right bound in this case. It turns out that for $n=3$ the isoperimetric inequality does provide the correct estimate. As far as we know this is the only essential case where the isoperimetric inequality of projective space is understood:
\end{remark}


\begin{theorem}
\label{theorem:rp3functions}
Let $f \colon \mathbb RP^3\to\mathbb R$ be a real-analytic map. Then for some $y\in\mathbb R$ the area of the fiber $f^{-1}(y)$ is at least $\pi^2$.
\end{theorem}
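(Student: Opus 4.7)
The plan, as the preceding remark suggests, is to apply the standard median trick together with the sharp isoperimetric inequality on $\mathbb{RP}^3$. With the round metric descended from $\mathbb{S}^3$, one has $\vol(\mathbb{RP}^3)=\pi^2$, so a half-volume region has volume $\pi^2/2$; moreover the Clifford torus $\{|z|=|w|=1/\sqrt{2}\}\subset\mathbb{S}^3$, being invariant under the antipodal map, descends to a flat minimal surface in $\mathbb{RP}^3$ of area exactly $\pi^2$ that bisects the volume. The target bound $\pi^2$ is thus the area of this Clifford-torus bisector, and the task is to show that any analytic $f$ must have a fiber of at least this area.

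Set $V(y):=\vol(\{f\le y\})$. Because $f$ is real-analytic and (without loss of generality) non-constant, each level set $f^{-1}(y)$ is semi-analytic of dimension at most $2$, hence of $3$-measure zero. Therefore $V$ is continuous and strictly increasing on its effective range, and by the intermediate value theorem there is a median $y^*$ with $V(y^*)=\pi^2/2$. Let $\Omega:=\{f\le y^*\}$. The topological boundary of $\Omega$ is contained in $f^{-1}(y^*)$, so the geometric-measure-theoretic perimeter satisfies $P(\Omega)\le \haus^2(f^{-1}(y^*))$, and it suffices to show $P(\Omega)\ge \pi^2$.

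Now invoke the sharp isoperimetric inequality in $\mathbb{RP}^3$ due to Ritor\'e and Ros, obtained as part of their classification of stable constant-mean-curvature surfaces in three-dimensional space forms: for every region of volume $\pi^2/2$ the perimeter is at least $\pi^2$, with equality only for the region bounded by (the descent of) the Clifford torus. Combining with the previous paragraph gives $\haus^2(f^{-1}(y^*))\ge P(\Omega)\ge \pi^2$, as required.

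The main obstacle is the Ritor\'e--Ros isoperimetric input itself: in $\mathbb{RP}^3$ the half-volume minimizer is neither a geodesic ball nor a tubular neighborhood of $\mathbb{RP}^1$, so the sharp constant $\pi^2$ is not accessible by elementary comparison with $\mathbb{S}^3$ and genuinely requires the full classification. A secondary technical point is that $y^*$ could a priori be a critical value, but real-analyticity keeps $f^{-1}(y^*)$ a rectifiable set of dimension at most $2$ that supports the boundary of $\Omega$ in the GMT sense, so the inequality $P(\Omega)\le \haus^2(f^{-1}(y^*))$ and the subsequent isoperimetric bound still apply; alternatively one may perturb $y^*$ to nearby regular values and use continuity of the isoperimetric profile.
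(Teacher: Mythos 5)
Your proposal is correct and follows the same route as the paper: choose the median value so that the sublevel set has half the volume $\pi^2/2$ of $\mathbb{RP}^3$, and apply the sharp Ritor\'e--Ros isoperimetric inequality, whose half-volume minimizer is the quotient of the Clifford torus with area $\pi^2$. The extra care you take with continuity of the volume function and with the GMT perimeter bound $P(\Omega)\le \haus^2(f^{-1}(y^*))$ is a welcome tightening of the paper's terser argument, but the substance is identical.
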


\begin{proof}
This time the median argument works. We choose the median value $m$ such that the volume of 
\[
N = \{x : f(x) \le m\}
\]
is half of the volume of $\mathbb RP^3$. An isoperimeteric inequality will give us a certain estimate for the area of $\partial N\subseteq f^{-1}(m)$; and Ritor\'e and Ros~\cite{rosrit1992} provide us such: A subset of $\mathbb R P^3$ with volume half of the full volume has boundary with area greater of equal to the area of the quotient of the torus. 
\end{proof}

\begin{remark}
Observe that the function $f(z_1,z_2)=|z_1|$, satisfies $f^{-1}(t)= \mathbb S^1(t) \times \mathbb S^1(\sqrt{1-t^2})$ for $t \in [0,1]$, an elementary calculation yields that the bound $\pi^2$ in Theorem~\ref{theorem:rp3functions} is best possible, i.e. for maps with enough regularity, 
\[
\inf_{f \colon \mathbb RP^3 \to \mathbb R} \sup_y \vol_2(f^{-1}(y))=\pi^2.
\]
\end{remark}

A trivial corollary of Gromov's waist of the sphere theorem~\cite{grom2003,mem2009} (in view of~\cite{karvol2013}):

\begin{corollary}
\label{corollary:rpn-gromov}
For any continuous map $f:\mathbb RP^n \to N$, where $N$ is a $k$-dimensional manifold, there exists a fiber $f^{-1}(y)$ with the following property:
\[
\vol (\nu_t f^{-1}(y)) \ge \vol (\nu_t \mathbb RP^{n-k}),\ \forall t>0,
\]
the metric for $\mathbb RP^n$ is assumed coming from the projection from the round sphere $\mathbb S^n\to\mathbb RP^n$, and $\mathbb RP^{n-k}$ is a standard geodesic $(n-k)$-subspace in $\mathbb RP^n$.
\end{corollary}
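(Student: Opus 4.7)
The plan is to lift $f$ to the sphere through the standard double cover $\pi\colon \mathbb{S}^n\to\mathbb{RP}^n$ and then invoke Gromov's waist-of-the-sphere theorem (with the extension from \cite{karvol2013} allowing a manifold target $N$) applied to $\tilde f = f\circ \pi\colon\mathbb{S}^n\to N$. This yields some $y\in N$ with
\[
\vol_{\mathbb{S}^n}(\nu_t \tilde f^{-1}(y)) \ge \vol_{\mathbb{S}^n}(\nu_t \mathbb{S}^{n-k}) \quad\text{for all } t>0.
\]
The whole content of the corollary is then the bookkeeping needed to pass this inequality back down to $\mathbb{RP}^n$.

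The central observation is that $\pi$ is a two-to-one local isometry when $\mathbb{RP}^n$ carries the quotient metric. For any subset $A\subset\mathbb{RP}^n$, the preimage $\pi^{-1}(A)$ is antipodally symmetric, and the identity
\[
d_{\mathbb{RP}^n}([p], A) \;=\; \min_{q\in \pi^{-1}(A)} d_{\mathbb{S}^n}(p,q)
\]
(which is exactly the definition of the quotient distance) implies $\pi^{-1}(\nu_t A) = \nu_t \pi^{-1}(A)$ inside $\mathbb{S}^n$ for every $t>0$. Since $\pi$ is a $2$-to-$1$ Riemannian covering, this immediately gives the volume relation
\[
\vol_{\mathbb{S}^n}(\nu_t \pi^{-1}(A)) \;=\; 2\,\vol_{\mathbb{RP}^n}(\nu_t A).
\]

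Applying this identity once with $A=f^{-1}(y)$ (so that $\pi^{-1}(A)=\tilde f^{-1}(y)$) and once with $A=\mathbb{RP}^{n-k}$ (a standard totally geodesic copy whose preimage is a great $\mathbb{S}^{n-k}\subset\mathbb{S}^n$) translates the waist inequality on the sphere into
\[
2\,\vol_{\mathbb{RP}^n}(\nu_t f^{-1}(y)) \;\ge\; 2\,\vol_{\mathbb{RP}^n}(\nu_t \mathbb{RP}^{n-k}),
\]
and dividing by $2$ finishes the proof.

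I do not expect any real obstacle: the only subtlety is verifying that $t$-neighborhoods lift and project correctly for all $t>0$ (not merely for small $t$), but this is automatic from the quotient-metric description above and the fact that $\pi^{-1}(A)$ is already antipodally symmetric, so no choice between a point and its antipode has to be made. Everything else is the direct application of the sphere waist theorem in its manifold-target form from \cite{karvol2013}.
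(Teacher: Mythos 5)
Your proposal is correct and follows exactly the route the paper takes: lift $f$ through the double cover $\pi\colon\mathbb S^n\to\mathbb RP^n$, apply Gromov's waist theorem (in its manifold-target form via \cite{karvol2013}) to $f\circ\pi$, and descend using the fact that $\pi$ is a $2$-to-$1$ Riemannian covering under which $t$-neighborhoods and volumes correspond with a uniform factor of $2$. The paper states this in one line; your write-up merely supplies the bookkeeping (the identity $\pi^{-1}(\nu_t A)=\nu_t\pi^{-1}(A)$ and the volume doubling), all of which is accurate.
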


Indeed, Corollary \ref{corollary:rpn-gromov} follows from the spherical waist inequality by considering the quotient
map from $\mathbb S^n$ to $\mathbb RP^n$. Note that the results~\cite{klartag2016,ak2016ball} are only valid in this limit case setting (for the Minkowski content), because the estimate for $n$-volume of a $t$-neighborhood in those works is not tight. 

Now let us think of examples of maps showing that the estimate of Corollary~\ref{corollary:rpn-gromov} is tight. Unlike the case of the sphere, building such examples is not obvious. The most symmetric way of having such is when we have a map $\mathbb RP^n\to N$ with \emph{all} fibers isometric to $\mathbb RP^{n-k}$. We know of several such examples:

\begin{theorem}
\label{theorem:rpn-hopfmaps}
The bound of Corollary~\ref{corollary:rpn-gromov} is tight for:
a) $n$ odd and $k=n-1$;
b) $n=7$ and $k=4$;
c) $n=15$ and $k=8$.
\end{theorem}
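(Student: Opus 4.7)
The plan is to exhibit, in each of the three cases, an explicit continuous map $f\colon \mathbb{RP}^n\to N$ whose every fiber is (congruent to) a totally geodesic $\mathbb{RP}^{n-k}\subset\mathbb{RP}^n$; then all fibers have identical $t$-neighborhoods, so $\sup_{y}\vol(\nu_t f^{-1}(y))=\vol(\nu_t \mathbb{RP}^{n-k})$ for every $t>0$, matching Corollary~\ref{corollary:rpn-gromov}. The examples are the three Hopf-type fibrations passed to the $\mathbb{Z}_2$-quotient: the complex Hopf fibration $\mathbb{S}^{2m+1}\to\mathbb{CP}^m$ for case (a) (with $n=2m+1$, $k=n-1=2m$), the quaternionic Hopf fibration $\mathbb{S}^7\to\mathbb{S}^4=\mathbb{HP}^1$ for case (b), and the octonionic Hopf fibration $\mathbb{S}^{15}\to\mathbb{S}^8$ for case (c).

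First I would observe that in all three cases the antipodal map $-\id$ on the total sphere is contained in the structure group of the fibration (it is the element $-1$ of $S^1$, $S^3$, or $S^7$ respectively, acting on the fibers), so the Hopf map descends to a well-defined continuous map $f\colon \mathbb{RP}^n\to N$, where $N$ is $\mathbb{CP}^m$, $\mathbb{S}^4$, or $\mathbb{S}^8$. Second, I would verify the geometric key point: each fiber of the Hopf fibration on the sphere is a \emph{great} subsphere $\mathbb{S}^{n-k}\subset\mathbb{S}^n$ (for the complex case these are the great circles obtained as intersections with complex lines; for the quaternionic case, great $3$-spheres; for the octonionic case, great $7$-spheres). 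Being great subspheres, the fibers are totally geodesic, and after passing to the $\mathbb{Z}_2$-quotient each fiber of $f$ is a totally geodesic $\mathbb{RP}^{n-k}\subset\mathbb{RP}^n$ isometric to the standard geodesic projective subspace that appears in Corollary~\ref{corollary:rpn-gromov}.

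Third, I would argue that all fibers of $f$ are congruent via ambient isometries. The group $U(m+1)$, $\mathrm{Sp}(2)$, or $\mathrm{Spin}(9)$ acts by isometries on $\mathbb{S}^n$, permutes the Hopf fibers transitively, commutes with the antipodal action, and therefore descends to an isometric action on $\mathbb{RP}^n$ permuting the fibers of $f$ transitively. Hence any two fibers of $f$ are carried to each other by an isometry of $\mathbb{RP}^n$, so their $t$-neighborhoods have the same volume, which equals $\vol(\nu_t\mathbb{RP}^{n-k})$ by the identification from the previous step.

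The main subtlety lies in the octonionic case (c): because $\mathbb{O}$ is non-associative, one cannot describe the $S^{15}\to S^8$ fibration as a principal $S^7$-bundle in the usual algebraic sense. The correct viewpoint is to define the map by $(x,y)\mapsto(2x\bar y,\,|x|^2-|y|^2)\in\mathbb{O}\oplus\mathbb{R}$ for $(x,y)\in\mathbb{O}^2$ with $|x|^2+|y|^2=1$, and to verify by direct computation that the fibers are the great $7$-spheres cut out by the octonionic lines $\{(tu,tv):t\in\mathbb{O},\,|t|=1\}$, and that $\mathrm{Spin}(9)\subset SO(16)$ acts isometrically on $\mathbb{S}^{15}$ permuting these fibers transitively while commuting with $-\id$. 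Once this is in place, the argument of the previous paragraph applies verbatim and completes all three cases.
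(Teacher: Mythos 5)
Your proposal is correct and takes essentially the same route as the paper, which establishes tightness in all three cases precisely by factoring the complex, quaternionic, and octonionic Hopf fibrations $\mathbb{S}^n\to\mathbb{CP}^{(n-1)/2}$, $\mathbb{S}^7\to\mathbb{S}^4$, $\mathbb{S}^{15}\to\mathbb{S}^8$ through $\mathbb{RP}^n$. You merely spell out the details the paper leaves implicit (descent through the antipodal map, the fibers being totally geodesic great subspheres, and transitivity of the isometry group on fibers), all of which check out.
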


\begin{proof}
Case (a) follows from considering the usual projection  map $\mathbb S^n\to\mathbb CP^{(n-1)/2}$ which factors through $\mathbb RP^n\to\mathbb CP^{(n-1)/2}$, Cases (b) and (c) follow similarly from considering the Hopf maps $\mathbb S^7\to \mathbb S^4$ and $\mathbb S^{15}\to \mathbb S^8$ that also factor through $\mathbb RP^7$ and $\mathbb RP^{15}$ respectively.
\end{proof}

\begin{remark}
Considering maps $\mathbb RP^3\to N$ with $\dim N = 1$ we see that the bound in the Minkowski content version of Corollary~\ref{corollary:rpn-gromov} is not necessarily tight. This case is reduced to the case $N=\mathbb R$, otherwise $N$ is a circle $\mathbb R/\mathbb Z$, but the map evidently lifts to a map $f: \mathbb RP^3\to \mathbb R$ in this case, we find again Theorem~\ref{theorem:rp3functions}

\end{remark}

\section{Complex projective space}

The method of~\cite{klartag2016,ak2016ball} also gives the following corollary from considering the projection $\mathbb S^{2n+1}\to \mathbb CP^n$:

\begin{corollary}
\label{corollary:cpn-gromov}
For any continuous map $f:\mathbb CP^n \to N$, where $N$ is a $2m$-dimensional manifold, there exists a fiber $f^{-1}(y)$ with the following property:
\[
\vol (\nu_t f^{-1} ) \ge \vol (\nu_t \mathbb CP^{n-m}),\ \forall t>0,
\]
where $\mathbb CP^{n-m}$ is a standard complex projective $(n-m)$-subspace in $\mathbb CP^n$.
\end{corollary}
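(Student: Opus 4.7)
The plan is to lift the map to the round sphere via the Hopf fibration and apply the sharp Minkowski-content form of Gromov's waist theorem for the sphere. Let $\pi\colon \Sphere^{2n+1}\to\mathbb{CP}^n$ denote the standard Hopf projection, which is a Riemannian submersion with totally geodesic fibers isometric to a circle of length $2\pi$. Given $f\colon \mathbb{CP}^n\to N$ with $\dim N = 2m$, form the composition $g=f\circ\pi\colon\Sphere^{2n+1}\to N$. Since $\dim\Sphere^{2n+1}-\dim N = 2(n-m)+1$, the spherical waist theorem (as used in~\cite{klartag2016,ak2016ball}) produces $y\in N$ such that
\[
\vol_{\Sphere^{2n+1}}\!\bigl(\nu_t\, g^{-1}(y)\bigr)\ \ge\ \vol_{\Sphere^{2n+1}}\!\bigl(\nu_t\,\Sphere^{2(n-m)+1}\bigr),\qquad \forall t>0,
\]
where the right-hand side uses a standard equatorial subsphere $\Sphere^{2(n-m)+1}\subset\Sphere^{2n+1}$, which we are free to take to be $\pi^{-1}(\mathbb{CP}^{n-m})$ for a standard $\mathbb{CP}^{n-m}\subset\mathbb{CP}^n$.

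Next I would push this inequality down to $\mathbb{CP}^n$ using the following simple lemma: for any set $B\subset\Sphere^{2n+1}$ saturated by the Hopf $\Sphere^1$-action,
\[
\pi(\nu_t B)\ =\ \nu_t\,\pi(B)\subset\mathbb{CP}^n.
\]
The inclusion $\subset$ is immediate from $\pi$ being $1$-Lipschitz; the inclusion $\supset$ follows because any short path in $\mathbb{CP}^n$ admits a horizontal lift of the same length starting at any chosen point of the fiber, and $B$ being $\Sphere^1$-invariant means such a lift lands in $B$ as soon as its endpoint does. Applying this to $B_1=g^{-1}(y)=\pi^{-1}(f^{-1}(y))$ and $B_2=\pi^{-1}(\mathbb{CP}^{n-m})$ — both $\Sphere^1$-invariant by construction, with $\Sphere^1$-invariant $t$-neighborhoods since $\Sphere^1$ acts by isometries — we conclude $\pi(\nu_t B_i)=\nu_t \pi(B_i)$ for $i=1,2$.

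Finally, by the Fubini/coarea identity for a Riemannian submersion with circle fibers of length $2\pi$, the $(2n+1)$-volume of any $\Sphere^1$-invariant set is exactly $2\pi$ times the $2n$-volume of its image in $\mathbb{CP}^n$. Applying this to both $\nu_t B_1$ and $\nu_t B_2$ and combining with the spherical inequality yields
\[
2\pi\,\vol_{\mathbb{CP}^n}\!\bigl(\nu_t f^{-1}(y)\bigr)\ \ge\ 2\pi\,\vol_{\mathbb{CP}^n}\!\bigl(\nu_t\,\mathbb{CP}^{n-m}\bigr),
\]
which after cancellation is the desired estimate. The only nontrivial ingredient is the sharp Minkowski-content form of Gromov's waist theorem for the sphere (the approach of~\cite{klartag2016,ak2016ball}); everything else reduces to standard properties of the Hopf fibration as a Riemannian submersion with closed isometric orbits.
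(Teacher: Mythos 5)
Your proof is correct and takes essentially the same route as the paper: compose $f$ with the Hopf projection $\Sphere^{2n+1}\to\mathbb{CP}^n$, apply Gromov's spherical waist theorem to the composition, and transfer the neighborhood-volume inequality down via the Riemannian-submersion properties of the projection. You spell out the details the paper leaves implicit (the identity $\pi(\nu_t B)=\nu_t\pi(B)$ for $\Sphere^1$-saturated sets and the factor $2\pi$ from Fubini), but the argument is the same.
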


\begin{proof}
We consider the projection $T : \mathbb S^{2n+1} \to \mathbb CP^n$, which takes the uniform measure on $\mathbb S^{2n+1}$ to a multiple of a uniform measure on $\mathbb CP^n$ and has the property that the distance between the circles $T^{-1}(x')$ and $T^{-1}(x'')$ equals the distance between $x'$ and $x''$ (this can be viewed as a normalization of the Riemannian metric on $\mathbb CP^n$, or the \emph{Riemannian submersion property} for $T$).

Applying Gromov's waist of the sphere theorem~\cite{grom2003,mem2009,karvol2013} to the composition $f\circ T$ then gives the result.
\end{proof}

\begin{remark}
Evidently, for any continuous map $f:\mathbb CP^n \to N$ with a manifold $N$ of odd dimension $k$, the same argument applies, but in this case we cannot find a natural submanifold like $\mathbb CP^{n-k/2}$ in $\mathbb CP^n$ to compare with.
\end{remark}

To establish upper bounds we again start with the question: Is it possible to foliate $\mathbb CP^n$ into $\mathbb CP^{n-k}$? Here is one case:

\begin{theorem}
\label{theorem:cpn-hopfmaps}
The bound of Corollary~\ref{corollary:cpn-gromov} is tight for:
b) $n=3$ and $m=2$;
c) $n=15$ and $m=4$.
\end{theorem}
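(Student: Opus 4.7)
For both cases the plan is to exhibit an explicit fibration $f\colon \mathbb{CP}^n\to N^{2m}$ all of whose fibers are mutually isometric to the standard totally geodesic $\mathbb{CP}^{n-m}\subset \mathbb{CP}^n$. If such an $f$ can be arranged as a Riemannian submersion, then for any $t>0$ and any fiber $F=f^{-1}(y)$ the $t$-neighborhood $\nu_t F$ is isometric to the $t$-neighborhood $\nu_t \mathbb{CP}^{n-m}$, so the lower bound of Corollary~\ref{corollary:cpn-gromov} is attained with equality at every scale $t$, and taking the supremum over $y$ changes nothing because all fibers are congruent.

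\textbf{Case (b), $n=3$, $m=2$.} Here I would use the \emph{twistor fibration} $\mathbb{CP}^3\to \mathbb{HP}^1=\mathbb{S}^4$. Concretely, view $\mathbb{S}^7\subset \mathbb{H}^2$ and let the right-action of unit quaternions produce the quaternionic Hopf map $\pi_{\mathbb{H}}\colon \mathbb{S}^7\to \mathbb{HP}^1=\mathbb{S}^4$. Inside $\mathbb{H}$ sits the subgroup $U(1)=\{a+bi\}$, and the induced right action of $U(1)$ on $\mathbb{S}^7$ is the usual Hopf circle whose quotient is $\mathbb{CP}^3$. Since $U(1)\subset \mathbb{S}^3$, the $U(1)$-orbits are contained in the $\mathbb{S}^3$-orbits of $\pi_{\mathbb{H}}$, so $\pi_{\mathbb{H}}$ descends to a map $f\colon \mathbb{CP}^3\to \mathbb{S}^4$ whose fibers are $\mathbb{S}^3/U(1)=\mathbb{CP}^1$. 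With the standard round/Fubini--Study normalizations, $\pi_{\mathbb{H}}$ is a Riemannian submersion and the subduction $\mathbb{S}^7\to\mathbb{CP}^3$ is also a Riemannian submersion, from which one checks that $f$ itself is a Riemannian submersion and each fiber is a round totally geodesic $\mathbb{CP}^1$. This gives equality in Corollary~\ref{corollary:cpn-gromov}.

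\textbf{Case (c), $n=15$, $m=4$.} The analogous target is a fibration $f\colon \mathbb{CP}^{15}\to N^{8}$ whose fibers are all totally geodesic $\mathbb{CP}^{11}$. Mimicking the diagram above, one would like to start with a Hopf-type Riemannian submersion $\pi\colon \mathbb{S}^{31}\to N^{8}$ whose fiber is a great $\mathbb{S}^{23}$ invariant under the Hopf circle $U(1)\subset \mathbb{S}^{31}$; quotienting the fibers by this $U(1)$ would then produce the desired $\mathbb{CP}^{11}$-fibered submersion $\mathbb{CP}^{15}\to N^{8}$ in complete parallel with the quaternionic case. The construction of $\pi$ is the main obstacle: the classical chain $\mathbb{S}^1,\mathbb{S}^3,\mathbb{S}^7\hookrightarrow\mathbb{S}^{3},\mathbb{S}^{7},\mathbb{S}^{15}$ does not extend to dimension $31$ (Adams' Hopf-invariant-one theorem rules out a \emph{great} sphere fibration of $\mathbb{S}^{31}$ over $\mathbb{S}^8$), so if the construction exists one must instead exploit a homogeneous fibration coming from an exceptional group or from an $(\mathbb{C}\otimes\mathbb{O})$-type projective geometry and verify by hand that its fibers, after dividing by the Hopf circle, remain isometric totally geodesic complex projective subspaces.

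\textbf{Where I expect to get stuck.} Case (b) is routine modulo standard facts about the twistor submersion. The substantive content is case (c): identifying the base $N^8$ and the precise homogeneous structure that realizes $\mathbb{CP}^{15}$ as a $\mathbb{CP}^{11}$-bundle compatibly with the Fubini--Study metric. Once such a model is in hand, verifying that the projection is a Riemannian submersion and that the fibers are totally geodesic copies of $\mathbb{CP}^{11}$ is a matter of checking that the relevant isotropy subgroup acts by isometries of $\mathbb{CP}^{15}$ preserving a standard $\mathbb{CP}^{11}$, and then the Minkowski content of every tube $\nu_tF$ equals that of $\nu_t\mathbb{CP}^{11}$ as required.
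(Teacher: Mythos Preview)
Your treatment of case~(b) is exactly what the paper does: the quaternionic Hopf map $\mathbb{S}^7\to\mathbb{S}^4$ factors through $\mathbb{CP}^3$ because the Hopf circle $U(1)$ sits inside the unit quaternions, and the resulting submersion $\mathbb{CP}^3\to\mathbb{S}^4$ has totally geodesic $\mathbb{CP}^1$ fibers.

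Your difficulty in case~(c) is not mathematical but typographical. The statement of the theorem reads ``$n=15$'', but the paper's own proof uses the octonionic Hopf map $\mathbb{S}^{15}\to\mathbb{S}^8$ and asserts that it factors through $\mathbb{CP}^7$, not $\mathbb{CP}^{15}$. So the intended parameters are $n=7$, $m=4$: one obtains a map $\mathbb{CP}^7\to\mathbb{S}^8$ whose fibers are $\mathbb{S}^7/U(1)=\mathbb{CP}^3=\mathbb{CP}^{n-m}$. This is completely parallel to case~(b), replacing $\mathbb{H}$ by $\mathbb{O}$; the paper's one-line justification is that ``a quaternion or an octonion subspace is a complex subspace too'', i.e.\ the great $\mathbb{S}^7$ fibers of the octonionic Hopf map are invariant under the Hopf $U(1)$-action. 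Your own invocation of Adams' Hopf-invariant-one theorem is precisely the right signal that $n=15$ cannot be what was meant: there is no Hopf-type fibration of $\mathbb{S}^{31}$ to feed into your diagram, and no $\mathbb{CP}^{11}$-fibration of $\mathbb{CP}^{15}$ is being claimed. Once you reset to $n=7$, your overall strategy and your argument for~(b) carry over verbatim.
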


\begin{proof}
This again follows from considering the Hopf maps $\mathbb S^7\to \mathbb S^4$ and $\mathbb S^{15}\to \mathbb S^8$, which factor through $\mathbb CP^3$ and $\mathbb CP^7$ respectively, because a quaternion or an octonion subspace is a complex subspace too.
\end{proof}

\section{Flat tori}
\label{section:tori}

Several observations for flat tori follow from the result of~\cite{klartag2016}. Let us fix some notation, for a full-dimensional lattice $\Lambda\subset\mathbb R^n$ we consider the torus $T_\Lambda = \mathbb R^n/\Lambda$ with the flat metric induced from $\mathbb R^n$.

We call a torus \emph{orthogonal} if $\Lambda$ is an orthogonal Cartesian product of one-dimensional lattices; if their covolumes are $a_1,\ldots, a_n$, we denote such an orthogonal torus by $T_{a_1,\ldots,a_n}$ and order the sizes $a_1\le \dots\le a_n$. From \cite[Corollary 9]{ak2016ball} 
we infer:

\begin{corollary}
\label{corollary:tortho}
For any continuous map $f: T_{a_1,\ldots,a_n} \to N$, where $N$ is a $k$-dimensional manifold, there exists a fiber $f^{-1}(y)$ with
\[
\lmink_{n-k} f^{-1}(y) \ge \prod_{i=1}^{n-k} a_i.
\]
The bounds is attained at the natural projection $T_{a_1,\ldots,a_n} \to T_{a_{n-k+1},\ldots, a_n}$.
\end{corollary}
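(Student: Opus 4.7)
The plan is to follow the template of \cite[Corollary~9]{ak2016ball}: $1$-Lipschitz transport an anisotropic Gaussian measure from $\mathbb R^n$ to the Lebesgue measure on the torus, then invoke Gromov's waist theorem for the sphere. The only modification compared to the parallelepiped case treated there is the target of the transport.

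For each coordinate $i$, consider
\[
T_i : \mathbb R \to \mathbb R/a_i\mathbb Z, \qquad T_i(x) = \int_0^x e^{-\pi \xi^2/a_i^2}\,d\xi \pmod{a_i},
\]
which factors through the Gaussian CDF (a $1$-Lipschitz bijection of $\mathbb R$ onto $(-a_i/2, a_i/2)$) followed by the quotient onto the circle of length $a_i$. Both factors are $1$-Lipschitz with respect to the intrinsic circle metric, so $T_i$ is $1$-Lipschitz and pushes $e^{-\pi x^2/a_i^2}\,dx$ forward to the Lebesgue measure of the circle. The product $T = (T_1,\ldots,T_n) : \mathbb R^n \to T_{a_1,\ldots,a_n}$ is then $1$-Lipschitz in the flat product metric and transports $\mu := \prod_i e^{-\pi x_i^2/a_i^2}\,dx_i$ to $\vol_n^T$. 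The proof of \cite[Corollary~9]{ak2016ball}, which amounts to Gromov's waist theorem for the sphere \cite{grom2003,mem2009,karvol2013} followed by a Gaussian reduction, yields a fiber value $y\in N$ for $g := f\circ T$ such that $\mu(\nu_t g^{-1}(y)) \ge \mu(\nu_t E)$, where $E = \{x_{n-k+1} = \cdots = x_n = 0\}$ is the coordinate subspace along the smallest $n-k$ variances. A direct product-measure computation yields
\[
\mu(\nu_t E) \;=\; \Bigl(\prod_{i=1}^{n-k} a_i\Bigr)\, v_k t^k\bigl(1+o(1)\bigr)
\]
as $t\to+0$. Since $T$ is $1$-Lipschitz we have $\nu_t g^{-1}(y) \subseteq T^{-1}(\nu_t^T f^{-1}(y))$, and because $T_*\mu = \vol_n^T$ this gives $\vol_n^T(\nu_t^T f^{-1}(y)) \ge \mu(\nu_t g^{-1}(y))$; dividing by $v_k t^k$ and passing to $\liminf_{t\to+0}$ establishes the claim.

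For tightness, the natural projection $\phi : T_{a_1,\ldots,a_n}\to T_{a_{n-k+1},\ldots,a_n}$, $(x_1,\ldots,x_n)\mapsto (x_{n-k+1},\ldots,x_n)$, has every fiber isometric to the flat sub-torus $T_{a_1,\ldots,a_{n-k}}$, a smooth compact $(n-k)$-submanifold of Riemannian volume $\prod_{i=1}^{n-k} a_i$; since its lower Minkowski content equals its Riemannian volume, the inequality is sharp. The main obstacle is the anisotropic Gaussian waist invoked above: one must verify that Gromov's spherical waist, combined with the Gaussian reduction, selects precisely the coordinate subspace $E$ that minimises the transverse Gaussian tube volume, which in the ordering $a_1 \le \cdots \le a_n$ is the one featured above. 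This is handled exactly as in the parallelepiped proof of \cite{ak2016ball} and requires no further input.
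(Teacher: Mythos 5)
Your argument is correct and rests on the same engine as the paper's: the anisotropic Gaussian waist from \cite{ak2016ball} combined with a $1$-Lipschitz, measure-preserving transport, plus \cite{karvol2013} to pass from $\mathbb R^k$ targets to $k$-dimensional manifolds $N$. The difference is in the reduction. The paper's proof is a one-liner: place the open fundamental parallelotope $\prod_{i}(0,a_i)$ inside the torus and quote \cite[Corollary 9]{ak2016ball} verbatim, using that the quotient map does not increase distances (so $t$-neighborhoods in the torus contain the images of $t$-neighborhoods taken in the parallelotope) while preserving volume. You instead re-open that corollary and build a transport landing directly on the torus, composing each one-dimensional Gaussian CDF with the covering map of the circle of length $a_i$; this is a clean alternative that bypasses the neighborhood-comparison step, at the cost of re-verifying the Lipschitz and pushforward properties of the wrapped map, which you do correctly. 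Both routes defer the genuinely hard point --- that the (non-isotropic) Gaussian waist selects the coordinate subspace transverse to the $k$ largest $a_i$, and does so with a single fiber working for all $t>0$ simultaneously --- to the cited work, so neither is more self-contained than the other. Your tightness verification via the isometric sub-torus fibers of the coordinate projection matches what the paper asserts without proof.
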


\begin{proof}
Put the open parallelotope $\prod_{i=1}(0, a_i)$ to the torus and apply \cite[Corollary 9]{ak2016ball} (in view of~\cite{karvol2013}).
\end{proof}

The case of a torus which is not orthogonal is less clear. An interesting question arises if we only allow maps to $\mathbb R^k$, not to arbitrary $k$-dimensional manifold. As was pointed to us by Isaac Mabillard, there exists a smooth map from the torus $T^n$ to $\mathbb R^n$ with multiplicity at most $2$, it is produced by embedding $T^{n-1}\to \mathbb R^n$ (e.g. by induction), then taking a tubular neighborhood of it and doubling the neighborhood, the double being diffeomorphic to $T^n$. Hence

\begin{theorem}
\label{theorem:tor2}
There exist maps $f: T_{a_1,\ldots,a_n} \to \mathbb R^k$ with $(n-k)$-volume of all fibers at most $2\prod_{i=1}^{n-k} a_i$.
\end{theorem}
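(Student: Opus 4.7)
The plan is to compose Mabillard's $2$-to-$1$ map on $T^k$ (sketched in the paragraph preceding the statement) with the orthogonal projection from the full torus onto its last $k$ factors; this packages the theorem into two well-known ingredients and no serious obstacle arises.

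First I would build Mabillard's map explicitly in dimension $k$. Inductively embed $T^{k-1}\hookrightarrow\mathbb R^k$ by starting from $\mathbb S^1\subset\mathbb R^2$ and at each step taking the boundary of a thin tubular neighborhood of the previous embedding in one higher Euclidean dimension. Let $N\subset\mathbb R^k$ be a closed tubular neighborhood of the resulting embedded $T^{k-1}$, so that $N$ is diffeomorphic to $T^{k-1}\times[-1,1]$. Double $N$ along its boundary to obtain $D\cong T^{k-1}\times\mathbb S^1 = T^k$, and let $p\colon D\to N\subset\mathbb R^k$ be the tautological folding map (``collapse the two halves onto $N$''). This $p$ is smooth and every fiber $p^{-1}(y)$ contains at most two points.

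Now use the isometric orthogonal splitting $T_{a_1,\ldots,a_n}\cong T_{a_1,\ldots,a_{n-k}}\times T_{a_{n-k+1},\ldots,a_n}$. Identify the second factor diffeomorphically with $T^k$ (its flat metric plays no role in what follows), and let $\pi$ denote the Riemannian submersion onto this factor, whose fibers are isometric copies of $T_{a_1,\ldots,a_{n-k}}$ of $(n-k)$-volume exactly $\prod_{i=1}^{n-k}a_i$. Set $f:=p\circ\pi$. For any $y\in\mathbb R^k$, the preimage
\[
f^{-1}(y)=\pi^{-1}\bigl(p^{-1}(y)\bigr)
\]
is a disjoint union of at most two fibers of $\pi$, so $\vol_{n-k}(f^{-1}(y))\le 2\prod_{i=1}^{n-k}a_i$, as required.

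The argument is painless precisely because we do not need any metric control on $p$; only its fiber cardinality enters the estimate. The sole classical ingredient is the inductive embedding $T^{k-1}\hookrightarrow\mathbb R^k$, and the only verification of substance is that doubling $T^{k-1}\times[-1,1]$ along its boundary produces $T^{k-1}\times\mathbb S^1$, which is immediate.
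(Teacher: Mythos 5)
Your proposal is correct and follows exactly the paper's route: compose the orthogonal projection $T_{a_1,\ldots,a_n}\to T_{a_{n-k+1},\ldots,a_n}$ (whose fibers are isometric copies of $T_{a_1,\ldots,a_{n-k}}$) with Mabillard's multiplicity-two map $T^k\to\mathbb R^k$ described just before the theorem. The only difference is that you spell out the doubling construction of that map in more detail than the paper does, which is a welcome addition but not a different argument.
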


\begin{proof}
Take the natural projection $T_{a_1,\ldots,a_n} \to T_{a_{n-k+1},\ldots, a_n}$ and follow by the map with multiplicity at most $2$ from $T_{a_1,\ldots,a_n}$ to $\mathbb R^k$.
\end{proof}

So, for maps from orthogonal tori to the Euclidean spaces, we know the optimal waist up to a factor of $2$. For the particular case $T_{a_1,\ldots,a_n} \to \mathbb R$ (continuous functions) the results are known since \cite{hadwiger1972}. Hadwiger's paper was in German, so we provide an argument here for completeness.

\begin{theorem}
\label{thm:torus-isoperimetry}
In the torus $T_{a_1,\ldots,a_n}$ any subset $M$ of half volume of the torus has
\[
\lmink_{n-1} \partial M \ge 2\prod_{i=1}^{n-1} a_i.
\]
The equality is attained at
\[
M = T_{a_1,\ldots,a_{n-1}}\times [0, a_n/2] \subset T_{a_1,\ldots,a_n}.
\]
\end{theorem}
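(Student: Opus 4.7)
The plan is to slice in the longest coordinate direction and combine a coarea-type inequality with a fiberwise rearrangement. Assuming $a_1 \le \cdots \le a_n$, let $T := T_{a_1,\ldots,a_n}$, $T' := T_{a_1,\ldots,a_{n-1}}$, and $\pi\colon T \to T'$ the projection forgetting the last coordinate; each fiber $L_{x'} := \pi^{-1}(x')$ is a circle of circumference $a_n$. Define the profile $m(x') := \mathcal{H}^1(M \cap L_{x'})$; Fubini gives $\int_{T'} m\, dx' = V/2$, where $V = \prod a_i$.

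The first step is a slicewise coarea-type bound. Since the metric on $T$ is a flat product, one checks that $\nu_t(\partial M) \cap L_{x'} \supseteq \nu_t^{L_{x'}}(\partial(M \cap L_{x'}))$ for every $x'$. Integrating over $T'$, dividing by $2t$, and invoking Fatou's lemma yields
\[
\lmink_{n-1}(\partial M) \;\ge\; \int_{T'} N(x') \, dx',
\]
where $N(x')$ is the number of endpoints of the circular slice. On the set $A := \{x' : 0 < m(x') < a_n\}$ one has $N(x') \ge 2$, so $\lmink_{n-1}(\partial M) \ge 2|A|$. In particular, when $|A| = |T'|$ the bound $2\prod_{i=1}^{n-1} a_i$ follows at once, and the equality example $M = T' \times [0, a_n/2]$ (with $m \equiv a_n/2$) falls into this regime.

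For general $M$, write $E := \{m=0\}$, $F := \{m=a_n\}$, and perform a fiberwise Steiner rearrangement, replacing each slice $M \cap L_{x'}$ by the arc $[0,m(x')] \subset L_{x'}$ to form $M^*$. The elementary 1D fact that a single arc minimizes the length of the $t$-neighborhood among all subsets of a given measure in a circle, together with Fubini, gives $\lmink_{n-1}(\partial M^*) \le \lmink_{n-1}(\partial M)$. The rearranged set $M^*$ is the subgraph of the profile $m$, whose boundary splits into a bottom sheet at $x_n = 0$ over $\{m > 0\}$, the graph of $m$ as a ``top,'' and vertical walls along $\partial F$ where arcs wrap around the fibers. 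A direct accounting of these three pieces, using $\int m\, dx' = V/2$ to balance base and height, the inequality $\sqrt{1+|\nabla m|^2} \ge 1$ for the graph of $m$, and the fact $a_n \cdot |\partial F|_{T'} \ge 2a_n$ on any nontrivial $F$, produces the desired estimate $\lmink_{n-1}(\partial M^*) \ge 2|T'|$.

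The main obstacle is justifying the fiberwise Steiner rearrangement cleanly on the torus: unlike in Euclidean space, one cannot invoke the classical reflection-in-a-hyperplane proof, so one must argue via the $t$-tube directly, checking that the slicewise 1D minimizing property of the arc survives integration against the ambient Minkowski content. A secondary delicate point is the BV treatment when either $E$ or $F$ has positive measure, where one must carefully account for both the rising/falling arc-length penalty of the graph of $m$ across $\partial E$ (supplying the missing $2|E|$ via a coarea identity $\int|\nabla m|\,dx' = \int_0^{a_n}|\partial \{m > t\}|_{T'}\,dt$) and the vertical walls over $\partial F$; the two cases are symmetric under $m \mapsto a_n - m$ (equivalently $M \mapsto M^c$), which reduces the bookkeeping.
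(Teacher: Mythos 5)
Your first step (slicing along the longest direction and getting $\lmink_{n-1}\partial M\ge 2|A|$ with $A=\{0<m<a_n\}$) is sound, but the proof breaks at the rearrangement. The fiberwise replacement of each slice by the arc $[0,m(x')]$ is \emph{not} perimeter-decreasing: for the unit disk in $T_{a_1,a_2}$ the ``subgraph'' set $\{0\le y\le 2\sqrt{1-x^2}\}$ has boundary consisting of a segment of length $2$ plus half of an ellipse with semi-axes $1$ and $2$, total $\approx 6.84$, versus $2\pi\approx 6.28$ for the disk. Only the \emph{centered} arc $[-m/2,m/2]$ (genuine circular Steiner symmetrization) is perimeter-nonincreasing, and even that is a statement about BV-perimeter; its validity for the lower Minkowski content of the topological boundary of an arbitrary half-volume set is exactly the point you flag as ``the main obstacle'' and never resolve. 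With the centered version your bottom sheet disappears and the two graphs contribute $\int_A\sqrt{4+|\nabla m|^2}\,dx'$, so the entire theorem reduces to the functional inequality $\int_{T'}\sqrt{4\cdot\mathbf 1_A+|\nabla m|^2}\ge 2|T'|$ under $\int m=a_n|T'|/2$ --- and this is where your ``direct accounting'' fails. You cannot extract both $|A|$ and $\int_A|\nabla m|$ from $\int_A\sqrt{1+|\nabla m|^2}$, since $\sqrt{1+s^2}<1+s$; the claim $|\partial F|_{T'}\ge 2$ is false for $n\ge 3$ ($F$ can be a tiny ball of arbitrarily small perimeter), and in any case the ``vertical walls'' over $\partial F$ have height $a_n-m\to 0$ when $m$ is continuous, so they carry no area; and recovering $2|E|$ from $\int|\nabla m|=\int_0^{a_n}\mathrm{Per}_{T'}(\{m>t\})\,dt$ requires a lower bound on the perimeter of superlevel sets of \emph{arbitrary} volume fraction in $T'$, i.e.\ the full isoperimetric problem in the lower-dimensional torus --- which is both circular and false in the form you need (small superlevel sets have small perimeter). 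A careful optimization does salvage the $n=2$ case, but the resulting inequality is tight at $|A|=a_1/\sqrt2$ as well as at $|A|=a_1$, so there is no slack for the loose bookkeeping proposed.

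The paper sidesteps all of this with a different and much shorter reduction: by a cyclic translation in $x_1$ one arranges that each of the two halves $\{x_1\in(0,a_1/2)\}$, $\{x_1\in(a_1/2,a_1)\}$ contains exactly half of its volume in $M$; translating each piece independently in $x_2$, then $x_3$, and so on, splits the torus into $2^n$ boxes of size $a_1/2\times\dots\times a_n/2$, each meeting $M$ in exactly half its volume. The isoperimetric inequality for a box (Lemma~\ref{lemma:isoperimetry-par}, proved by $1$-Lipschitz transport of the Gaussian measure) gives each box a contribution of $2^{-n+1}\prod_{i=1}^{n-1}a_i$, and summing over the $2^n$ boxes yields $2\prod_{i=1}^{n-1}a_i$. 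If you want to pursue the slicing route, you would need to (i) switch to the centered symmetrization, (ii) prove its monotonicity for $\lmink_{n-1}$ of topological boundaries, and (iii) prove the sharp one-variable inequality above for BV functions $m:T'\to[0,a_n]$, none of which is routine.
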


\begin{proof}
Evidently, by a (cyclic) translation of the torus along its first coordinate $x_1$ it is possible to have
\[
\vol_n M\cap \{x_1\in (0, a_1/2)\} = \vol_n M\cap \{x_1\in (a_1/2, a_1)\}.
\]
Now we take the two parts $\{x_1\in (0, a_1/2)\}$, $\{x_1\in (a_1/2, a_1)\}$ and continue to translate them along the $x_2$ coordinates independently, then take the four parts and translate them independently along $x_3$ and so on.

In the end we will split the torus into $2^n$ open parallelotopes of size $a_1/2\times \dots \times a_n/2$ each having precisely half of its volume in $M$. Applying the isoperimetry for parallelotopes (see Lemma~\ref{lemma:isoperimetry-par}) we obtain that each of them has at least
\[
\prod_{i=1}^{n-1} (a_i/2) = 2^{-n+1} \prod_{i=1}^{n-1} a_i
\]
of $\lmink_{n-1} \partial M$, summing up we obtain the required estimate.
\end{proof}

The isoperimetry for a parallelotope follows, for example, from the isoperimetry of the Gaussian measure by the transportation trick of~\cite{klartag2016} (or see \cite{hadwiger1972}); let us provide a detailed argument in the form of the following:

\begin{lemma}
\label{lemma:isoperimetry-par}
Let a set $M\subset (0,a_1)\times\dots\times(0,a_n)$ have half volume of the parallelotope, then for the lower Minkowski content
\[
\lmink_{n-1} \partial M \ge \prod_{i=1}^{n-1} a_i,
\]
assuming $a_1\le a_2 \le \dots \le a_n$.
\end{lemma}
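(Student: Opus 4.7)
The plan is to adapt the Gaussian transportation trick used in the proof of Theorem~\ref{theorem:non-linear-vaaler-cube}, using an anisotropic product Gaussian density matched to the side lengths of $P$, and to reduce the assertion to the classical Gaussian isoperimetric inequality. Consider on $\R^n$ the probability density $\rho(x) = \prod_{i=1}^n \frac{1}{a_i}\, e^{-\pi x_i^2/a_i^2}$, and the coordinate-wise transport map $T = T_1 \times \cdots \times T_n \colon \R^n \to P := (0,a_1)\times\cdots\times(0,a_n)$ given by
\[
T_i(x) = \frac{a_i}{2} + \int_0^x e^{-\pi \xi^2/a_i^2}\, d\xi.
\]
The scaling is arranged so that $T_i'(x) = e^{-\pi x^2/a_i^2} \le 1$, hence $T$ is a $1$-Lipschitz diffeomorphism; a direct calculation shows that it pushes $\mu = \rho\, dx$ forward to $(\prod_i a_i)^{-1}$ times Lebesgue measure on $P$, so that $\vol_n(T(A)) = (\prod_i a_i)\,\mu(A)$ for every Borel $A \subseteq \R^n$.

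Put $M' = T^{-1}(M)$, so that $\mu(M') = 1/2$. Next I would invoke the anisotropic Gaussian isoperimetric inequality, which follows from the classical Sudakov--Tsirelson--Borell isoperimetric inequality by the diagonal rescaling $(x_i) \mapsto (x_i\sqrt{2\pi}/a_i)$ that takes $\mu$ to the standard Gaussian on $\R^n$: under this rescaling the Euclidean unit ball becomes an ellipsoid whose smallest semi-axis is $\sqrt{2\pi}/a_n$, so for every $t > 0$
\[
\mu(M' + tB^n) \ge \Phi\!\left(\tfrac{t\sqrt{2\pi}}{a_n}\right),\qquad \mu(M'^c + tB^n) \ge \Phi\!\left(\tfrac{t\sqrt{2\pi}}{a_n}\right),
\]
where $B^n$ is the Euclidean unit ball and $\Phi$ the standard normal CDF. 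Since $(M' + tB^n)\setminus M'$ and $(M'^c + tB^n)\setminus M'^c$ are disjoint subsets of the $t$-neighborhood $(\partial M')_t$, adding these lower bounds gives
\[
\mu\bigl((\partial M')_t\bigr) \;\ge\; 2\!\left(\Phi\!\left(\tfrac{t\sqrt{2\pi}}{a_n}\right) - \tfrac{1}{2}\right).
\]

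Finally I would transfer the estimate back to $P$. Since $T$ is $1$-Lipschitz and a homeomorphism taking $\partial M'$ onto the relative boundary of $M$ in $P$, one has $T\bigl((\partial M')_t\bigr) \subseteq (\partial M)_t$ for every $t > 0$. Combined with the transport identity,
\[
\vol_n\bigl((\partial M)_t\bigr) \ge \vol_n\bigl(T((\partial M')_t)\bigr) = \Bigl(\prod_{i=1}^n a_i\Bigr)\mu\bigl((\partial M')_t\bigr).
\]
Dividing by $2t$ and letting $t \to 0^+$ yields
\[
\lmink_{n-1}(\partial M) \;\ge\; \Bigl(\prod_{i=1}^n a_i\Bigr) \cdot \Phi'(0)\,\frac{\sqrt{2\pi}}{a_n} \;=\; \prod_{i=1}^{n-1} a_i.
\]

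The main technical point is the correct invocation of the anisotropic Gaussian isoperimetric inequality with the minimizing half-space perpendicular to the longest axis $a_n$, producing the Gaussian perimeter constant $1/a_n$; this is a routine but careful linear rescaling of the classical Sudakov--Tsirelson--Borell theorem, and it is the single place where the ordering hypothesis $a_1 \le \cdots \le a_n$ actually enters.
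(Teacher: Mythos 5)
Your proof is correct and is essentially the paper's argument: a $1$-Lipschitz coordinate-wise transport of a product Gaussian onto the box, followed by the Sudakov--Tsirelson isoperimetric inequality, with the ordering $a_1\le\dots\le a_n$ entering only through the worst direction $a_n$. The only (cosmetic) difference is that you build the anisotropy into the Gaussian density and rescale before transporting, whereas the paper transports the standard Gaussian to the unit cube and then applies the linear stretch $A:(0,1)^n\to(0,a_1)\times\dots\times(0,a_n)$ afterwards, tracking that $\nu_t\partial N$ lands inside $\nu_{a_n t}\partial M$.
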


\begin{proof}
Construct the diffeomorphism $\tau : \mathbb R^n \to (0,1)^n$ as a Cartesian product of one-dimensional maps
\[
\tau_1 : \mathbb R \to (0,1)\quad x\mapsto \int_{-\infty}^x e^{-\pi t^2}\; dt.
\]
The map $\tau$ transports the Gaussian measure (call it $\gamma$) with density $e^{-\pi|x|^2}$ to the uniform density in the cube and is evidently $1$-Lipschitz.

Assume we have a set $N\subset (0,1)^n$ of half the measure with boundary $\partial N$. The set $\tau^{-1}(N)$ has half of the Gaussian measure and by the isoperimetric inequality (concentration property) for the Gaussian measure~\cite{sudakov-tsirelson1974}
\[
\gamma(\nu_t \tau^{-1}(\partial N)) \ge \int_{-t}^t e^{-\pi s^2}\; ds.
\]
From the Lipschitz property of $\tau$ we obtain
\[
\tau(\nu_t \tau^{-1}(\partial N) ) \subseteq \nu_t \partial N
\]
and therefore
\[
\vol_n (\nu_t \partial N) \ge \int_{-t}^t e^{-\pi s^2}\; ds.
\]

Now let us stretch the coordinate axes $A : (0,1)^n \to (0,a_1)\times\dots\times(0,a_n)$, assume $A(N) = M$. The set $\nu_t \partial N$ will get into $\nu_{a_nt} \partial M$ ($a_n$ is the largest of the $a_i$) and the volume gets multiplied by $a_1\dots a_n$, hence we have
\[
\vol_{n} ( \nu_{a_nt}\partial M ) \ge a_1\dots a_n \int_{-t}^t e^{-\pi s^2}\; ds.
\]
Replacing $a_n t$ by $u$ we have
\[
\vol_{n} ( \nu_u \partial M ) \ge a_1\dots a_n \int_{-u/a_n}^{u/a_n} e^{-\pi t^2}\; dt.
\]
Passing to the limit $u\to +0$ we obtain the estimate
\[
\lmink_{n-1} \partial M \ge a_1\dots a_n \lim_{u\to +0} \frac{\int_{-u/a_n}^{u/a_n} e^{-\pi t^2}\; dt}{2u} = a_1\dots a_n \frac{1}{a_n} = \prod_{i=1}^{n-1} a_i.
\]
\end{proof}

\section{Waists of convex bodies}
\label{section:convex}

Let us introduce a notation for the Minkowski content waist:
\[
\gamma_{n-k} (X) = \inf_{f : X \to \mathbb R^k \text{continuous}} \sup_{y\in \mathbb R^k} \lmink_{n-k} f^{-1}(y).
\]

\begin{theorem}
\label{theorem:waistiso}
If $K\subset\mathbb R^n$ is a centrally symmetric convex body with $\vol_n K = v_n$ (the volume of the unit ball) then $\gamma_{n-k} (K) \le \gamma_{n-k} B^n = v_{n-k}$.
\end{theorem}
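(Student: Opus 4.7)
The natural approach is to take $f\colon K\to \mathbb R^k$ to be the orthogonal projection onto a suitable $k$-dimensional linear subspace. Specifically, for any $(n-k)$-dimensional linear subspace $W$, setting $f=\pi_{W^\perp}$ makes the fibers of $f$ into convex $(n-k)$-dimensional sections of $K$; for such flat convex sets the lower Minkowski content coincides with the $(n-k)$-dimensional Lebesgue measure, and by Brunn's concavity theorem combined with the central symmetry of $K$ (which makes the function $y\mapsto \vol_{n-k}(K\cap(W+y))^{1/(n-k)}$ concave and even, hence maximized at $y=0$), the central section $K\cap W$ is the largest fiber. This reduces the theorem to the geometric claim that there exists an $(n-k)$-dimensional linear subspace $W$ with $\vol_{n-k}(K\cap W)\le v_{n-k}$.

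To establish this claim I would use iterated Steiner symmetrization. Let $K_0=K$ and define $K_i$ by applying Steiner symmetrization to $K_{i-1}$ with respect to a hyperplane $e_i^\perp$, with the directions $(e_i)$ chosen in a dense sequence on $\mathbb S^{n-1}$. By a classical result of Mani, the sequence $(K_i)$ converges in Hausdorff metric to the Euclidean ball $B^n$ of the same volume, whose minimum central $(n-k)$-section has volume exactly $v_{n-k}$ (all sections being isometric to the unit $(n-k)$-ball). The key technical step is to show that the quantity $\min_W \vol_{n-k}(K_i\cap W)$ is non-decreasing in $i$; passing to the limit along Hausdorff convergence then yields $\min_W \vol_{n-k}(K\cap W)\le v_{n-k}$, which combined with the orthogonal-projection construction above completes the proof.

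The main obstacle is proving the monotonicity of $\min_W\vol_{n-k}(K\cap W)$ under a single Steiner symmetrization. The statement is natural from the heuristic that symmetrization makes sections more uniform, but a rigorous argument requires tracking how the section-volume function $W\mapsto\vol_{n-k}(K\cap W)$ transforms under the fibrewise translation $\sigma\colon K\to K^e$ that implements the symmetrization. For subspaces $W$ containing $e$ one has an immediate equality, and for $W\subset e^\perp$ convexity of $K$ gives the inclusion $K\cap W\subset K^e\cap W$ so the section volume cannot decrease; the general case of a subspace in oblique position is the delicate one and may require a Brunn--Minkowski estimate applied to the slabwise distribution of chord lengths together with approximation by subspaces adapted to $e$. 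A fallback approach would be to exhibit a non-linear map $f\colon K\to \mathbb R^k$ directly, for instance by composing a Knothe transport $T\colon K\to B^n$ with the optimal projection on the ball; however, since the Knothe map is not $1$-Lipschitz in general, the resulting Minkowski content estimate would again have to exploit the lower-triangular Jacobian structure and the central symmetry.
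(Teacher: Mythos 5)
Your first half coincides exactly with the paper's argument: take the linear projection with kernel $W$, note that the fibers are the parallel sections $K\cap(W+y)$ whose Minkowski content is their Lebesgue measure, and use Brunn--Minkowski concavity together with evenness in $y$ to conclude that the central section is the largest fiber. This correctly reduces the theorem to the claim that some central $(n-k)$-dimensional section of $K$ has volume at most $v_{n-k}$.

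The gap is that this remaining claim is the entire content of the theorem, and you have not proved it. What you need is precisely Zhang's section inequality (Theorem 5.20 in Koldobsky's book, which is what the paper invokes): for an origin-symmetric convex body, $\min_W \vol_{n-k}(K\cap W)\le v_{n-k}\,(\vol_n K/v_n)^{(n-k)/n}$. Your proposed substitute --- that $\min_W\vol_{n-k}(K\cap W)$ is non-decreasing under a single Steiner symmetrization --- is exactly the step you flag as ``the main obstacle,'' and you resolve only the two easy cases ($W\ni e$ and $W\subset e^\perp$); the oblique case, where a symmetrization can redistribute mass so that some tilted section shrinks, is where all the difficulty sits, and I know of no proof (or reference) that the minimum itself is monotone. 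The way symmetrization actually enters the known proof of Zhang's inequality is different: one shows that the $L^n$-average $\int_{G(n,n-k)}\vol_{n-k}(K\cap W)^n\,dW$ does not decrease under Steiner symmetrization (the Busemann--Straus/Grinberg dual affine quermassintegral inequality) --- averaging over the Grassmannian is what tames the oblique subspaces --- and then bounds the minimum by that average. So either cite Zhang's theorem directly, or replace your monotonicity claim by the integral-average version and add the final ``min $\le$ mean'' step; as written, the proof is incomplete at its decisive point.
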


\begin{remark}
It is not clear if the same is true for not necessarily centrally symmetric convex $K$.
\end{remark}

\begin{proof}
In order to establish an upper bound for $\gamma_{n-k} (K)$ we are going to consider linear projections $\mathbb R^n\to\mathbb R^{n-k}$ with different $(n-k)$-dimensional kernels $L$. A corollary of the Brunn--Minkowski inequality tells that the function $\vol_{n-k} (L+v) \cap K$ is logarithmically concave in $v$, since it is also even in $v$, its maximum is attained at $v =0$. Therefore the waist for such a projection is just $\vol_{n-k} K\cap L$.

Now we use Zhang's result~\cite[Theorem 5.20]{koldobsky} that implies that from $\vol_n K = v_n$ it follows that for some $L$ we have $\vol_{n-k} K\cap L \le v_{n-k}$.
\end{proof}

\begin{remark} 
The waist of a convex body of volume $1$ can be arbitrary small. One might wonder which convex bodies of volume one minimize $max_{T\in SL(n)} \gamma_{n-k}(T(K))$ or which convex bodies in isotropic position $K$ minimize $\gamma_{n-k}(K)$. For both problems, the smallest waist that we know of is that of the cube.  
\end{remark}

Let us give a simple estimate for $\gamma_1(K)$ and then pass to its improved version for the Hausdorff measure:

\begin{theorem}[Bo'az Klartag, private communication]
\label{theorem:1waist-cs}
For centrally symmetric convex $K\subset\mathbb R^n$, the value $\gamma_1(K)$ is just the minimal width $w(K)$.
\end{theorem}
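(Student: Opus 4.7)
The plan is to establish the equality by bounding $\gamma_1(K)$ from above and below separately.

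For the upper bound $\gamma_1(K)\le w(K)$, I would simply exhibit a witness map: pick a unit vector $u$ realizing the minimal width, $w(K,u)=w(K)$, and take the orthogonal projection $\pi_u\colon K\to u^\perp\cong\mathbb{R}^{n-1}$. Each fiber is a chord of $K$ parallel to $u$, of length at most $w(K,u)=w(K)$, and the lower Minkowski content $\lmink_1$ of a line segment equals its length, so $\sup_y \lmink_1 \pi_u^{-1}(y)\le w(K)$.

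For the lower bound $\gamma_1(K)\ge w(K)$, the geometric key is that the inradius of a centrally symmetric convex body equals $w(K)/2$. Placing the center of symmetry at the origin, the support function satisfies $h_K(u)=h_K(-u)$, so $w(K,u)=2h_K(u)$, and the largest ball centered at the origin contained in $K$ has radius $\min_u h_K(u)=w(K)/2$; this ball is the largest inscribed ball overall, because $B(c,r)\subset K$ implies $B(-c,r)\subset K$ by symmetry and then $B(0,r)\subset K$ by convexity. Denote this ball by $B$. Given any continuous $f\colon K\to\mathbb{R}^{n-1}$, I would restrict it to $B$ and invoke the waist theorem for Euclidean balls from~\cite{ak2016ball}: there exists $y$ with $\lmink_1((f|_B)^{-1}(y))\ge 2\cdot w(K)/2 = w(K)$, the right-hand side being the length of a diameter of $B$, i.e.\ the equatorial one-dimensional section. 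Since $(f|_B)^{-1}(y)=B\cap f^{-1}(y)\subseteq f^{-1}(y)$ and the lower Minkowski content is monotone under inclusion (via monotonicity of $\nu_t(\cdot)$ and $\vol_n$), this forces $\lmink_1 f^{-1}(y)\ge w(K)$, completing the lower bound.

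No serious obstacle arises in this scheme; the whole content is a clean reduction of the codimension-one waist of $K$ to the already-known waist of its largest inscribed ball. The only auxiliary facts are routine, namely the support-function characterization of the inradius of a centrally symmetric body and the monotonicity of Minkowski tubes, so the proof in full should be very short. The closest thing to a subtle point is the identification of $B(0,w(K)/2)$ as the largest inscribed ball (and not merely the largest centered one), which follows from the symmetrization argument indicated above.
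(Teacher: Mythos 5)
Your proposal is correct and follows essentially the same route as the paper: the upper bound via the linear projection in a minimal-width direction, and the lower bound via inclusion-monotonicity of the waist applied to the inscribed ball of radius $w(K)/2$ (whose existence for centrally symmetric $K$ the paper also obtains by inflating a centered ball until it touches $\partial K$ at an antipodal pair), combined with the known waist of the ball. The paper's write-up is merely terser; no substantive difference.
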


\begin{proof}
Inflate a centered ball $rB^n\subseteq K$ until it touches the boundary $\partial K$; this will occur at (possibly not unique) pair of points $\{x, -x\}\subset\partial K$. Evidently, $2r$ will be the width of $K$ in this situation.

At any of these points the support hyperplane to $rB^n$ is also a support hyperplane to $K$, hence $K$ is between the hyperplanes with distance $2r$ between them. So the projection whose kernel is the line
in direction $x$ establishes
\[
\gamma_1(K)\le 2r.
\]
Also, the waists are inclusion-monotone and therefore
\[
\gamma_1(K)\ge \gamma_1(B) = 2r.
\]
\end{proof}

As previously mentioned, the $1$-Hausdorff measure of a compact set is at most its $1$-dimensional lower Minkowski content, see~\cite[Theorem 1.8]{ds1993}. Therefore Theorem \ref{theorem:1waist-cs} follows from the following:

\begin{theorem}
\label{theorem:1waist}
For any convex body $K\subset\mathbb R^n$ and a continuous map $f : K\to \mathbb R^{n-1}$ there exists a fiber $f^{-1}(y)$ of $1$-Hausdorff measure at least $w(K)$. The estimate is tight for any $K$, in particular $\gamma_1(K) = w(K)$.
\end{theorem}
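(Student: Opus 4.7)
The plan is to establish both directions of the equality $\gamma_1(K) = w(K)$.

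\textbf{Upper bound (tightness).} Let $v \in \mathbb{S}^{n-1}$ realize the minimum width, so that $K$ lies between two parallel supporting hyperplanes perpendicular to $v$ at distance $w(K)$. Take $f = \pi_{v^\perp}$, the orthogonal projection to $v^\perp \cong \mathbb{R}^{n-1}$. Each fiber is a chord of $K$ parallel to $v$ of length at most $w(K)$; being a $1$-rectifiable segment, its $\mathcal{H}^1$-measure and lower Minkowski content both equal this length. This furnishes $\gamma_1(K) \le w(K)$ and the matching Hausdorff-measure upper bound.

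\textbf{Lower bound via a shadow statement.} Given a continuous $f : K \to \mathbb{R}^{n-1}$, I would reduce the theorem to the following topological claim: there exist $u \in \mathbb{S}^{n-1}$ and $y \in \mathbb{R}^{n-1}$ such that the shadow $\pi_u(f^{-1}(y))$ equals $\pi_u(K)$, an interval whose length $w_u(K)$ is the width of $K$ in direction $u$. Since $w_u(K) \ge w(K)$ for every $u$ and $\pi_u$ is $1$-Lipschitz, this would yield
\[
\mathcal{H}^1(f^{-1}(y)) \;\ge\; \mathcal{H}^1\bigl(\pi_u(f^{-1}(y))\bigr) \;=\; w_u(K) \;\ge\; w(K),
\]
completing the proof. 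Writing $K_{u,t} := K \cap \{\langle \cdot, u\rangle = t\}$ for the parallel slices, the shadow equality $\pi_u(f^{-1}(y)) = \pi_u(K)$ is equivalent to $y \in \bigcap_{t \in \pi_u(K)} f(K_{u,t})$. So the task is to exhibit a direction $u$ for which the continua $f(K_{u,t}) \subset \mathbb{R}^{n-1}$ (continuous images of the connected convex slices, varying continuously in $t$ in the Hausdorff metric) share a common point.

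\textbf{The main obstacle.} The hard part is precisely this full-shadow claim. For $n = 2$ each set $f(K_{u,t}) = [a_u(t), b_u(t)]$ is a closed interval in $\mathbb{R}$, and the desired direction is one with $\max_t a_u(t) \le \min_t b_u(t)$. I would attempt to prove existence of such $u$ by an antipodal argument on $\mathbb{S}^{n-1}$: the quantities $u \mapsto \max_t a_u(t)$ and $u \mapsto \min_t b_u(t)$ are continuous and are swapped under the involution $u \mapsto -u$ (which merely reverses the parameterization of chords), forcing them to coincide for some $u$ by a Borsuk--Ulam / intermediate-value argument. In higher dimensions one would try to make a continuous selection $y(u) \in \bigcap_t f(K_{u,t})$ and argue that the obstruction to its existence over all $u \in \mathbb{S}^{n-1}$ vanishes by an index computation (or, alternatively, use the LS-category / cohomological framework alluded to in the authors' introduction to produce a fiber with a connected component joining two opposite support points). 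Once this step is secured, the $1$-Lipschitz projection inequality above is automatic and yields $\gamma_1(K) = w(K)$.
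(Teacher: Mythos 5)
Your upper bound (projecting along a minimal-width direction) is correct and is essentially what the paper does for the tightness part. The lower bound, however, has a genuine gap: the whole content of the theorem has been deferred to the unproven ``full shadow'' claim that some fiber satisfies $\pi_u(f^{-1}(y)) = \pi_u(K)$, and neither the claim nor your sketch of it holds up. For $n=2$ the proposed antipodal argument is vacuous: the slices satisfy $K_{-u,-t} = K_{u,t}$ as sets, so the quantities $\max_t a_u(t)$ and $\min_t b_u(t)$ are each \emph{invariant} under $u\mapsto -u$ rather than being swapped, and no Borsuk--Ulam or intermediate-value conclusion is available. Worse, the fiber component that the available topological tools actually produce need not have a full shadow at all: Alexandrov's width theorem (which is what the paper invokes) yields a connected component $X$ of some fiber that cannot be covered by a smaller homothet of $K$, but such an $X$ can have every shadow strictly shorter than $w(K)$ --- for $K=B^2$ the boundary of an inscribed equilateral triangle is connected and not coverable by any smaller disk, yet all of its shadows have length at most $\sqrt{3} < 2 = w(B^2)$. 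So the projection inequality $\mathcal H^1(f^{-1}(y)) \ge \mathcal H^1(\pi_u(f^{-1}(y)))$ cannot close the argument even after the strongest reasonable topological input.

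The idea you are missing is that the length bound comes from a \emph{covering} argument, not a projection. The paper first applies Alexandrov's width theorem to get a connected component $X$ of some fiber that does not fit into a smaller homothet of $K$, and then shows directly that any such compact connected set has $\mathcal H^1(X) \ge w(K)$. The geometric input is that $K-K \supseteq w(K)B^n$, so $K$ contains a segment of length $w(K)$ in \emph{every} direction; hence a short, nearly straight arc of length $\ell$ fits inside a homothet of $K$ of ratio about $\ell/w(K)$, and two intersecting homothets of ratios $\alpha_i,\alpha_j$ merge into one of ratio $\alpha_i+\alpha_j$ by convexity. Chopping $X$ into such arcs and merging along its connectedness shows that $\mathcal H^1(X) < w(K)$ would force $X$ into a homothet of ratio $<1$, contradicting the choice of $X$; a Helly-plus-spanning-tree step extends this from piecewise smooth fibers to arbitrary compact connected sets of finite $\mathcal H^1$-measure. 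To repair your write-up you would need either a proof of the full-shadow claim (which I would not expect to exist) or a replacement of it by this covering lemma.
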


\begin{proof}
By the Alexandrov waist theorem~\cite[Theorem 6.2]{kar2012} there exists a connected $X\subset K$ such that $f(X)$ is a single point $y$ and $X$ cannot be covered by a smaller homothet of $K$. This $X$ is a connected component of the fiber $f^{-1}(y)$.

Let us first assume that the map $f$ is real-analytic and the fibers of the map are decent one-dimensional sets with length defined in any reasonable way; let the length of $X$ be $\ell$.

The argument is very similar to~\cite[Lemma 5.5]{akp2014}. Assume $\ell < w(K)$ and choose $\epsilon>0$. Split $X$ into small smooth curve segments $I_1,\ldots,I_N$ so that on every such segment the unit tangent vector varies by at most $\epsilon$. The difference set $K - K$ has width $2w(K)$ and, as in the proof of Theorem~\ref{theorem:1waist-cs}, it contains the ball $w(K) B^n$; therefore $K$ contains a line-segment of length $w(K)$ in any given direction. Hence it is possible to cover every $I_i$ by a homothet $\alpha_i K + t_i$ so that
\[
\alpha_i w(K) \le \ell(I_i) (1+\epsilon).
\]
If two such homothets with factors $\alpha_i$ and $\alpha_j$ intersect then they can be together covered by a single homothet with factor $\alpha_i+\alpha_j$. From the connectedness of $X$ we see that this process of merging will end up when $X$ will be covered by a single homothet $\alpha K + t$ with
\[
\alpha w(K) \le \ell (1 + \epsilon)\ \Rightarrow\ \alpha \le \frac{\ell (1 + \epsilon)}{w(K)}.
\]
Choosing a sufficiently small $\epsilon$ and using the assumption $\ell < w(K)$ we obtain a contradiction with the choice of $X$.

Now pass to the case of a less regular component of the fiber $X$, this is just a compact connected set. Assume
\[
\haus_1 (X) = \ell < W < w(K).
\]
By the standard application of Helly's theorem there exists a subset of at most $n+1$ points $L\subset X$ such that $L$ already does not fit into a smaller homothet of $K$.  Invoke the definition of the $1$-Hausdorff measure, choose $\delta>0$ and cover $X$ with convex compacta $X_1,\ldots, X_N$ such that each has diameter $<\delta$ and the sum of their diameters is $<W$.

Since $X$ is connected then the intersections graph of the family $\{X_i\}$ is connected. Choose a minimal subgraph $T$ spanning the vertices touching $L$; this will be a tree with at most $n-1$ branchings (vertices of degree $>2$). Now for any edge $e\in T$, corresponding to an intersecting pair $X_i\cap X_j$, mark a point $p_e\in X_i\cap X_j$. Also mark all the points in $L$. Connect the marked points that belong to the same $X_i$ by straight line segments to obtain a connected graph $G$ spanning all marked points.

The graph $G$ has length at most $W+n\delta$, because its length is just the sum of diameters of the respective $\{X_i\}$ plus an error term that arises from the branching. Choosing $\delta$ sufficiently small we will obtain a connected graph $G$ of length $<w(K)$; but since $G$ contains $L$ by the construction, it cannot be covered by a smaller homothet of $K$. We have reduced the problem to a regular set $G$ in place of $X$, considered above.
\end{proof}

\begin{remark} 
Write $B_p^n = \{ x \in \mathbb R^n \, ; \, \sum_{i=1}^n |x_i|^p \leq r_{p,n} \}$, where $r_{p,n} > 0$ is determined by the requirement that $\vol_n(B_p^n) = 1$. Proposition 5.21 in \cite{lo2008} supplies an explicit transport map $\tilde{S}_{p,n}: \mathbb R^n \rightarrow \mathbb R^n$ which pushes forward the standard Gaussian measure on $\mathbb R^n$ to the uniform measure on $B_p^n$. When $p \geq 2$, the Lipschitz constant of $\tilde{S}_{p,n}$ is bounded by the constant $18$. We may therefore deduce bounds for the waist of $B_p^n$ from the Gaussian waist inequality. This provides a result similar to Theorem 1.3 from \cite{klartag2016}, for the particular case where $K = B_p^n$ and $p \geq 2$. 
\end{remark}

\section{Waists of sweepouts}
\label{section:sweepouts}

\subsection{The fundamental class.}

In~\cite{grom1983} Gromov (referring to Almgren) describes another version of the waist inequality, arising from the Lyusternik--Schnirelmann-type theory for the space of cycles: For a $k$-dimensional family $\mathcal F$ of rectifiable $(n-k)$-dimensional cycles in the round sphere $\mathbb S^n$, passing through a generic point of $\mathbb S^n$ odd number of times, some cycle $c\in \mathcal F$ has $(n-k)$-volume greater or equal to the volume of the equatorial $\mathbb S^{n-k}\subset \mathbb S^n$. 

Here a \emph{$k$-dimensional family of cycles} is understood as a singular $k$-cycle in the space of cycles and the phrase \emph{passing through a generic point of $\mathbb S^n$ an odd number of times} is a heuristic which we will refer to a \emph{sweepout} of $\mathbb S^n$. Formally we assume that $\mathcal F$ detects the fundamental class $a(n-k, \mathbb S^n)$ in the cohomology of the space of $(n-k)$-cycles. More details about this will be given in Subsection~\ref{section:cupproducts} below. The difference with the waist inequalities we have discussed so far is that two cycles of a sweepout $F$ might intersect while two fibers of a map $f$ cannot.

Let us show how to obtain similar Gromov--Almgren type results for other spaces with the $1$-Lipschitz technique of~\cite{klartag2016,ak2016ball}. Consider the orthogonal projection 
\[
p_m : \mathbb S^{n+m}\rightarrow B^n,
\] 
where $\mathbb S^{n+m}$ and $B^n$ are considered as the unit $(n+m)$-sphere and the unit $n$-dimensional ball centered at the origin in $\mathbb{R}^{n+m+1}$. Let $s_i$ denote a volume of $\mathbb S^i$ and $v_i$ the volume of $B^i$, we know that $s_i = (i+1) b_{i+1}$.

The uniform Riemannian measure $\sigma$ of $\mathbb S^{n+m}$ is mapped to a measure $\mu_m$ in $B^n$ with density 
\[
s_m(1-|x|^2)^{\frac{m-1}{2}},
\]
the scaling factor can be seen working in a neighborhood of the fiber $p_m^{-1}(0)$. Note that 
\[
\int \limits_{B^n} d\mu_m=s_{n+m}.
\]

Since $p_m$ is a $1$-Lipschitz map, we have for all $t>0$
\begin{equation}
\label{equation:projinclusion}
p_m^{-1}( \nu_t X )\supseteq \nu_t p_m^{-1}(X),
\end{equation}
where $\nu_t$ denotes the $t$-neighborghod of the set in $\mathbb S^{n+m}$. Therefore for any $X\subset B_n$ we have 
\begin{equation}
\label{equation:projinequality}
\mu_m( \nu_t X ) \ge \sigma( \nu_t p^{-1}(X) ).
\end{equation}

Note that if $X$ a rectifiable $(n-k)$-dimensional subset then $p_m^{-1}(X)$ is a rectifiable subset of the sphere with dimension $n+m-k$. We can then calculate the (weighted) volumes as the Minkowski content by~\cite[3.2.39]{federer1969}
\begin{equation}
\label{equation:fiberinequality}
\vol_{n-k}^{\mu_m} X
=
\lim_{t \to +0} \frac{\mu_m (\nu_t X )}{v_k t^k}
\geq
\lim_{t \to +0} \frac{\sigma(\nu_t p_m^{-1}(X) )}{v_k t^k}=
\vol_{n+m-k}p_m^{-1}(X),
\end{equation}
for $X$ we use its weighted volume with the density of the measure $\mu_m$ as the weight.

In the case $X$ is an $(n-k)$-dimensional linear subspace, we obtain the equality in (\ref{equation:fiberinequality}). Indeed, in this case the directions orthogonal to $p_m^{-1}X$ are projected by the differential of $p_m$ isometrically, hence we will have an approximate inverse of (\ref{equation:projinclusion}):
\[
p_m^{-1}( \nu_{t - o(t)} X )\subseteq \nu_t p_m^{-1}(X).
\]

Going to the limit $m\to\infty$ and scaling the $\mathbb R^n$ we may consider a Gaussian measure $\gamma$ (say, with density $\rho = e^{-\pi |x|^2}$) as the limit of such $\mu_m$. This can also be written explicitly. We have a statement for the projected density $\rho_m = s_m(1-|x|^2)^{\frac{m-1}{2}}$, and we want to scale $h_m : \mathbb R^n \to \mathbb R^n$ with
\[
y = h_m(x) = \sqrt{\frac{m-1}{2\pi}} x
\]
so that the density becomes
\[
\rho'_m = \left(\frac{2\pi}{m-1}\right)^{n/2} s_m \left(1 - \frac{2\pi}{m-1}|y|^2 \right)^{\frac{m-1}{2}}
\]
for $|y|\le \sqrt{\frac{m-1}{2\pi}}$ and $\rho'_m = 0$ otherwise. For this density we know that if $X$ of codimension $k$ has $(n-k)$-dimensional $\rho'_m$-weighted volume at most that of an $(n-k)$-dimensional linear subspace in $\mathbb R^n$ then its preimage $(h_m\circ p_m)^{-1}(X)$ has $(n+m-k)$-volume at most that of an equatorial $\mathbb S^{n+m-k}\subset \mathbb S^{n+m}$. Stated this way, this property does not depend on multiplying $\rho'_m$ by a constant, so we may take eventually
\[
\rho''_m = \left(1 - \frac{2\pi}{m-1}|y|^2 \right)^{\frac{m-1}{2}}
\]
for $|y|\le \sqrt{\frac{m-1}{2\pi}}$ and $\rho'_m = 0$ otherwise. As $m\to\infty$, this density converges to $\rho = e^{-\pi|y|^2}$ in a monotone increasing fashion. For this normalized density, the $(n-k)$-volume of any $(n-k)$-linear subspace is always $1$.

Now assume we have a family of cycles $\{X\}_{X\in\mathcal F}$ in $\mathbb R^n$ with all $(n-k)$-dimensional $\rho$-weighted volume less or equal to $1-\epsilon$, the same (maybe with smaller $\epsilon$) is true for sufficiently large $m$ and the measure $\rho''_m$. In order to deal with cycles in $\mathbb R^n$ we just compactify it with one point at infinity and use the cycles if thus obtained a topological sphere $S^n = \mathbb R^n\cup\{\infty\}$. From the (\ref{equation:fiberinequality}) and the explanations above we see that every cycle in the family 
\[
\mathcal F_m = (h_m\circ p_m)^{-1}\mathcal F = \{(h_m\circ p_m)^{-1}(X)\}_{X\in\mathcal F}
\]
will then have $(n+m-k)$-dimensional volume strictly less than $s_{n+m-k}$. If $\mathcal F$ passes through a generic point of $\mathbb R^n$ odd number of times then it is geometrically evident that $\mathcal F_m$ passes through a generic point of $\mathbb S^{n+m}$ odd number of times. Thus the Gromov--Almgren result for $\mathcal F_m$ applies and we obtain:

\begin{theorem}
For a $k$-dimensional family $\mathcal F$ of rectifiable $(n-k)$-dimensional cycles sweeping out $\mathbb R^n$,  some cycle $c\in \mathcal F$ has $e^{-\pi|x|^2}$ weighted $(n-k)$-volume greater or equal to $1$.
\end{theorem}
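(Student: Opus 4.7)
The plan is to argue by contradiction, reducing the Gaussian statement on $\mathbb R^n$ to the Gromov--Almgren sweepout inequality on $\mathbb S^{n+m}$ via the $1$-Lipschitz projection $p_m$ composed with the rescaling $h_m$, then sending $m\to\infty$.

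First, I would assume toward a contradiction that there is a $k$-dimensional sweepout $\mathcal F$ of $\mathbb R^n$ (in the singular-cycle sense, detecting the generator $a(n-k,S^n)$ after one-point compactification $\mathbb R^n \cup\{\infty\} = S^n$) such that
\[
\sup_{X\in\mathcal F} \vol_{n-k}^{\rho}(X) \le 1 - \epsilon
\]
for some fixed $\epsilon>0$, where $\rho(y) = e^{-\pi|y|^2}$. Because the normalized pulled-back densities $\rho''_m$ increase monotonically to $\rho$ on $\mathbb R^n$, I can pick $m$ large enough so that still
\[
\sup_{X\in\mathcal F}\vol_{n-k}^{\rho''_m}(X) \le 1 - \epsilon/2,
\]
and in particular every cycle has weighted volume strictly less than the weighted volume of an $(n-k)$-dimensional linear subspace (which is $1$ under the normalization $\rho''_m$).

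Next I would lift the family along $h_m\circ p_m : \mathbb S^{n+m}\to \mathbb R^n$ by setting
\[
\mathcal F_m = \bigl\{\, (h_m\circ p_m)^{-1}(X) \,:\, X\in \mathcal F\,\bigr\}.
\]
Inequality (\ref{equation:fiberinequality}), applied after the $h_m$-rescaling, gives
\[
\vol_{n+m-k}\bigl((h_m\circ p_m)^{-1}(X)\bigr) \le \vol_{n-k}^{\rho''_m}(X) \cdot s_{n+m-k},
\]
with equality when $X$ is a linear $(n-k)$-subspace (since then the orthogonal directions to $p_m^{-1}(X)$ are isometrically projected by $Dp_m$). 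Thus every cycle in $\mathcal F_m$ has $(n+m-k)$-volume strictly smaller than $s_{n+m-k}=\vol_{n+m-k}(\mathbb S^{n+m-k})$.

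Finally I would verify that $\mathcal F_m$ is a genuine $k$-dimensional sweepout of $\mathbb S^{n+m}$, i.e.\ that it detects the fundamental class $a(n+m-k,\mathbb S^{n+m})$. Geometrically this is clear, since $h_m\circ p_m$ has degree $\pm 1$ onto the open ball and the preimage of a generic point of $\mathbb R^n$ is a round $m$-sphere, so a generic point of $\mathbb S^{n+m}$ is met by the family the same (odd) number of times as the generic point of $\mathbb R^n$ is met by $\mathcal F$; the one-point compactification is harmless because cycles in $\mathcal F$ missing a neighborhood of $\infty$ lift to cycles supported in a compact subset of $\mathbb S^{n+m}\setminus p_m^{-1}(\partial B^n)$. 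Once this sweepout property is in hand, the Gromov--Almgren theorem on the sphere produces a cycle in $\mathcal F_m$ of volume at least $s_{n+m-k}$, contradicting the previous paragraph.

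The main obstacle I expect is the last step: formally checking that pulling back cycles along $h_m\circ p_m$ preserves the ``detecting the fundamental class'' condition in the space of cycles, and in particular handling the point at infinity and the boundary $\partial B^n$ (where $p_m$ degenerates) so that the lifted family really is a continuous $k$-cycle in the cycle space of $\mathbb S^{n+m}$ rather than a family with missing or repeated members.
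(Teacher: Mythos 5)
Your proof is correct and follows essentially the same route as the paper: lift $\mathcal F$ through the $1$-Lipschitz composition $h_m\circ p_m$, use the monotone convergence $\rho''_m\nearrow e^{-\pi|x|^2}$ to pass to large $m$, and invoke the Gromov--Almgren sweepout theorem on $\mathbb S^{n+m}$. The only slight imprecision is that the $\rho''_m$-weighted volume of a linear $(n-k)$-subspace is strictly less than $1$ for finite $m$ (it only tends to $1$), but this is harmless since you only need the cycles' weighted volumes to stay strictly below that of the linear subspace for $m$ large, exactly as the paper arranges with its ``maybe with smaller $\epsilon$'' remark.
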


Applying the construction of~\cite{klartag2016} we see:

\begin{theorem}
For a $k$-dimensional family $\mathcal F$ of rectifiable $(n-k)$-dimensional cycles sweeping out $Q=[0,1]^n$ (the cycles are considered relative to $\partial Q$), some cycle $c\in \mathcal F$ has $(n-k)$-volume greater or equal to $1$.
\end{theorem}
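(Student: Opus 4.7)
\medskip

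\noindent\textbf{Proof plan.} The plan is to reduce the cube statement to the Gaussian statement proved just above by pulling back the family through the $1$-Lipschitz transport map used in the proof of Theorem~\ref{theorem:non-linear-vaaler-cube}. Let $T\colon \mathbb R^n \to (0,1)^n$ be the Cartesian product of the one-dimensional maps $y_i(x) = \int_0^{x_i} e^{-\pi\xi^2}\,d\xi$ (after a harmless shift so that the cube is $(0,1)^n$ rather than $(-1/2,1/2)^n$). This $T$ is a $1$-Lipschitz diffeomorphism pushing the Gaussian density $\rho(x)=e^{-\pi|x|^2}$ onto the uniform measure on $Q$, and, as shown in the proof of Theorem~\ref{theorem:non-linear-vaaler-cube}, for any tangent $(n-k)$-subspace $L$ one has $\det DT|_L \ge \rho$.

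\medskip

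\noindent First I would pull back the given family: set $\mathcal F' = \{T^{-1}(c):c\in\mathcal F\}$. Since $T$ is a diffeomorphism of $\mathbb R^n$ onto the interior of $Q$, each rectifiable $(n-k)$-cycle in $Q$ (mod $\partial Q$) pulls back to a rectifiable $(n-k)$-cycle in $\mathbb R^n$ (possibly with ends escaping to infinity in place of the original incidences with $\partial Q$), and the parameter space of $\mathcal F'$ is still $k$-dimensional. The sweepout property is preserved: for a generic interior point $x\in Q$ the preimage $T^{-1}(x)$ is a single point met by $T^{-1}(c)$ exactly as many times as $c$ meets $x$, so $\mathcal F'$ detects the fundamental class of $\mathbb R^n$ in the sense required by the previous theorem.

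\medskip

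\noindent Next I would apply the preceding Gaussian theorem to $\mathcal F'$ to obtain a cycle $c'=T^{-1}(c)\in\mathcal F'$ with $\int_{c'} e^{-\pi|x|^2}\,d\vol_{n-k}\ge 1$. To transfer this back to $c$, note that for $\vol_{n-k}$-almost every $x\in c'$ the tangent space $L_x$ is defined, and the area formula gives
\[
\vol_{n-k}(c) \;=\; \int_{c'} \det(DT|_{L_x})\,d\vol_{n-k}(x) \;\ge\; \int_{c'} \rho(x)\,d\vol_{n-k}(x) \;\ge\; 1,
\]
where the first inequality is exactly the eigenvalue interlacing estimate from Theorem~\ref{theorem:non-linear-vaaler-cube}. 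This completes the argument.

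\medskip

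\noindent The only real delicacy — and what I expect to be the main obstacle to write out cleanly — is the interplay between the non-compactness of $\mathbb R^n$ and the relative-boundary condition on $Q$: a cycle $c$ that meets $\partial Q$ pulls back to an unbounded cycle in $\mathbb R^n$, so one must check that the Gromov--Almgren sweepout statement used in the previous theorem genuinely applies to rectifiable cycles of possibly infinite diameter. This can be handled either by viewing both sides under the one-point compactification (so that $c$ lives in $Q/\partial Q\cong\mathbb S^n$ and $c'$ lives in $\mathbb R^n\cup\{\infty\}\cong\mathbb S^n$, with $T$ inducing a degree-one map between them), or, more concretely, by an exhaustion argument: approximate $\mathcal F$ by families supported strictly inside the open cube, apply the Gaussian theorem to each approximation, and pass to the limit using lower semicontinuity of the $(n-k)$-volume under rectifiable convergence.
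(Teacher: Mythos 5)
Your proposal is correct and takes essentially the same route as the paper: the paper's entire proof is ``applying the construction of \cite{klartag2016},'' i.e.\ composing with the $1$-Lipschitz Gaussian-to-uniform transport map $T$ and using the same $\det DT|_L \ge \rho$ interlacing estimate to transfer the Gaussian sweepout bound to the cube. The non-compactness issue you flag is handled in the paper exactly as you suggest, by working in the one-point compactification $\mathbb R^n\cup\{\infty\}\cong\mathbb S^n$.
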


\begin{theorem}
For a $k$-dimensional family $\mathcal F$ of rectifiable $(n-k)$-dimensional cycles sweeping out $T^n=\mathbb R^n/\mathbb Z^n$, some cycle $c\in \mathcal F$ has $(n-k)$-volume greater or equal to $1$.
\end{theorem}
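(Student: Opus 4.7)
The plan is to reduce the torus statement to the cube statement just proved, via the natural quotient map $\pi\colon Q \to T^n$ where $Q=[0,1]^n$ and $\pi$ identifies opposite faces of $\partial Q$. The map $\pi$ is a local isometry on the interior $\mathrm{int}(Q)$, globally $1$-Lipschitz, and restricts to a bijection from $\mathrm{int}(Q)$ onto the complement of a measure-zero subset of $T^n$; in particular it preserves $(n-k)$-dimensional rectifiable volume up to a negligible set.

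First I would lift the family $\mathcal{F}$ to $\widetilde{\mathcal{F}}=\{\pi^{-1}(c):c\in\mathcal{F}\}$ in $Q$. Because each $c$ is a cycle in $T^n$ and $\pi$ is a local homeomorphism on $\mathrm{int}(Q)$, the preimage $\pi^{-1}(c)$ acquires no new boundary inside $\mathrm{int}(Q)$; any boundary it carries is absorbed into $\partial Q$. Thus $\widetilde{\mathcal{F}}$ is a $k$-dimensional family of rectifiable $(n-k)$-cycles in $Q$ relative to $\partial Q$. Continuity of the family in its parameters is preserved because $\pi$ is continuous and open.

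Second, I would verify that $\widetilde{\mathcal{F}}$ sweeps out $Q$. By hypothesis, for a generic $y\in T^n$ the cycles of $\mathcal{F}$ pass through $y$ an odd number of times. Choosing such a $y$ outside the measure-zero set $\pi(\partial Q)$, its preimage is a single point $\widetilde y\in\mathrm{int}(Q)$, and the cycles of $\widetilde{\mathcal{F}}$ through $\widetilde y$ are in multiplicity-preserving bijection with the cycles of $\mathcal{F}$ through $y$. Hence $\widetilde{\mathcal{F}}$ detects the fundamental relative class in the space of $(n-k)$-cycles of $(Q,\partial Q)$.

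Applying the cube theorem from this section, some $\widetilde c=\pi^{-1}(c)\in\widetilde{\mathcal{F}}$ satisfies $\vol_{n-k}(\widetilde c)\ge 1$. Since $\pi$ is a local isometry on $\mathrm{int}(Q)$ and a generic cycle $c$ meets $\pi(\partial Q)$ in a set of $\haus^{n-k}$-measure zero, the pullback $\pi^{-1}(c)\cap\mathrm{int}(Q)$ decomposes into pieces each of which is mapped isometrically onto the corresponding portion of $c$, so the volumes coincide: $\vol_{n-k}(c)=\vol_{n-k}(\widetilde c)\ge 1$. The main technical obstacle is formalizing the sweepout transfer at the level of the cohomology of cycle spaces, namely showing that the map on parameter spaces induced by the pullback along $\pi$ carries the fundamental class of the cycle space of $T^n$ to the fundamental relative class of the cycle space of $(Q,\partial Q)$; the generic-point multiplicity computation above is the content of this verification, but a fully rigorous argument must be phrased within the Almgren/Gromov formalism of flat cycles.
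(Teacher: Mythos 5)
Your reduction is correct and is essentially the argument the paper intends: the paper dispatches both the cube and the torus cases with the single remark ``applying the construction of \cite{klartag2016}'', i.e.\ pulling the family back through a $1$-Lipschitz, measure-preserving map, and your quotient map $\pi\colon (Q,\partial Q)\to T^n$ is exactly that construction factored through the already-proved cube theorem rather than going directly back to the Gaussian statement. One small remark: the appeal to a ``generic cycle'' meeting $\pi(\partial Q)$ in $\haus^{n-k}$-measure zero is unnecessary, since the mass of the relative cycle $\pi^{-1}(c)$ in $(Q,\partial Q)$ only counts the part in the interior, which is isometric to $c\setminus\pi(\partial Q)$, so $\vol_{n-k}(c)\ge\vol_{n-k}(\pi^{-1}(c))$ holds for every cycle and the inequality goes the right way regardless.
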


The next theorem is obtained without going to the limit, just putting $m=1$:

\begin{theorem}
For a $k$-dimensional family $\mathcal F$ of rectifiable $(n-k)$-dimensional cycles sweeping out the unit ball $B^n$ (the cycles are considered relative to $\partial B$),  some cycle $c\in \mathcal F$ has $(n-k)$-volume greater or equal to $v_{n-k}$.
\end{theorem}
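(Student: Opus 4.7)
The plan is to specialize the construction developed earlier in the section to the \emph{finite} value $m = 1$, avoiding any limiting procedure. Concretely, I would use the orthogonal projection $p_1 : \mathbb{S}^{n+1} \to B^n$, whose fibres over $x \in B^n$ are circles of radius $\sqrt{1-|x|^2}$ that degenerate to points as $|x| \to 1$. At $m = 1$ the pushforward measure has constant density $s_1(1-|x|^2)^{0} = 2\pi$, i.e., $\mu_1 = 2\pi \cdot \mathrm{Leb}_{B^n}$; and a direct calculation (or Fubini applied to $p_1$) yields the identity $s_{n+1-k} = 2\pi\, v_{n-k}$. These are the only numerical facts that will be needed.

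The heart of the argument is the lift. Given a $k$-dimensional family $\mathcal F$ of rectifiable relative $(n-k)$-cycles in $(B^n, \partial B^n)$, set $\mathcal F' = \{p_1^{-1}(c) : c \in \mathcal F\}$, a family of rectifiable $(n+1-k)$-dimensional subsets of $\mathbb{S}^{n+1}$. Because fibres over $\partial B^n$ collapse to points, the relative boundary $\partial c \subset \partial B^n$ contributes only a lower-dimensional trace to $p_1^{-1}(c)$, so every $p_1^{-1}(c)$ is an absolute cycle in $\mathbb{S}^{n+1}$. Moreover $\mathcal F'$ is a sweepout of $\mathbb{S}^{n+1}$: a generic $y \in \mathbb{S}^{n+1}$ projects to an interior point $x = p_1(y) \in B^n$, and $y \in p_1^{-1}(c)$ iff $x \in c$; hence the mapping degree of the total space of $\mathcal F'$ over $\mathbb{S}^{n+1}$ equals the (relative) mapping degree of $\mathcal F$ over $B^n$, which is odd by assumption.

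Applying the Gromov--Almgren theorem to $\mathcal F'$ on $\mathbb{S}^{n+1}$ then produces some $c \in \mathcal F$ with $\vol_{n+1-k}(p_1^{-1}(c)) \geq s_{n+1-k}$. Inserting this into the Minkowski-content inequality~(\ref{equation:fiberinequality}) at $m = 1$ gives
\[
2\pi\,\vol_{n-k}(c) \;=\; \vol_{n-k}^{\mu_1}(c) \;\geq\; \vol_{n+1-k}(p_1^{-1}(c)) \;\geq\; s_{n+1-k} \;=\; 2\pi\, v_{n-k},
\]
and dividing by $2\pi$ yields $\vol_{n-k}(c) \geq v_{n-k}$, as required.

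The most delicate point is the second paragraph: verifying that $\mathcal F'$ inherits the sweepout property from $\mathcal F$ in the formal sense of detecting the fundamental class on the space of cycles, for which the degree computation above is the essential geometric input. Once this is recorded, the rest of the proof is just substitution into the already-established framework.
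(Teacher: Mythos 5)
Your proposal is correct and follows exactly the route the paper intends: the paper's entire proof of this theorem is the remark that one ``puts $m=1$'' in the framework built earlier in the section, and you have carried that out, correctly identifying the constant density $s_1=2\pi$, the identity $s_{n+1-k}=2\pi v_{n-k}$, the lifting of the family to a sweepout of $\mathbb S^{n+1}$, and the application of inequality~(\ref{equation:fiberinequality}). The details you supply (the collapse of fibres over $\partial B^n$ making the lifts absolute cycles, and the degree argument for the sweepout property) are exactly the ones the paper leaves implicit.
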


Evidently, all the estimates in the above theorems are tight for $k$-dimensional families of translation of an $(n-k)$-dimensional linear subspace.

\subsection{Volume of cup products}
\label{section:cupproducts}

Let us discuss the volume of cohomology classes in the space of cycles of the cube, we follow section 2 of Guth's paper \cite{guth2008} where the reader can find more details. We recommend  the introduction of that paper to build intuition on the space of cycles.  To define $\mathcal Z_{n-k}([0,1]^n, \partial[0,1]^n)$, the space of relative $(n-k)$-cycles of the cube, let $C_{\ell}(M)$ be the set of formal sums with $\mathbb{Z}_2$ coefficients $\sum_{a_i\in \mathbb{Z}_2} a_i f_i(\sigma_{\ell})$ of Lipshitz maps $f_i\colon \sigma_{\ell} \to M$ from the $\ell$-dimensional simplex $\sigma_{\ell}$ to a space $M$. Let $\partial$ be the usual boundary operator from singular homology. A chain $z \in C_{n-k}([0,1]^n)$ is a relative cycle if $\partial z \in C_{n-k-1}(\partial[0,1]^n)$. Given two homologous relative cycles $z_1$ and $z_2$, their flat distance is the smallest volume of a chain testifying that they are homologous\footnote{The general definition of the flat norm of a chain is more complicated but Guth \cite[Appendix A]{guth2008} proves the equivalence with this version for the case of cycles.}, i.e.,

\begin{alignat}{2}
d(z_1,z_2) &:=\inf_{\substack{\partial c=z_1+z_2+b}
    }& \vol(c),
\end{alignat}  
where $c$ is an element of $C_{n-k+1}([0,1]^n)$, and $b$ is an element of $C_{n-k}(\partial [0,1]^n)$. The space of cycles $\mathcal Z_{n-k}([0,1]^n, \partial[0,1]^n)$ is the completion of this metric space after identifying any two cycles that are at distance $0$.

By a \emph{family of cycles} we mean a continuous map from a simplicial complex $Y$ to the space of relative cycles $\mathcal Z_{n-k}([0,1]^n, \partial[0,1]^n)$. Almgren showed that a family of cycles 
\[
F \colon Y \to \mathcal Z_{n-k}([0,1]^n, \partial[0,1]^n)
\]
 induces well defined homomorphisms $H_i(Y)\to H_{i+n-k}([0,1]^n,\partial[0,1]^n)$, the so called gluing homomorphisms. The idea of building such homomorphisms is to pick a fine triangulation of $Y$ and perturb $F$ slightly to obtain a map $G$ defined on the $0$ skeleton of the triangulation, that takes each vertex to a $(n-k)$-Lipschitz cycle. Then extend this map to the whole triangulation, working inductively in the dimension. More precisely, the isoperimetric inequality ensures that  $d(F(y),G(y))$ is small for all $y \in Y$.  The continuity of $F$ and the assumption on the triangulation imply that neighbouring vertices in the triangulation $x$ and $y$, are mapped to cycles $G(y)$ and $G(x)$ which bound a chain $c$ of small volume i.e. $\partial c= G(x)+G(y)$. Then we extend $G$ letting $G(e):=c$, where $e$ is the edge $(x,y)$. 
 
By construction, $G$ is a chain map from the $1$-skeleton of the triangulation of $Y$ to a $n-k+1$ complex. For every $2$-face $\sigma$ in the triangulation of $Y$, its boundary $\partial \sigma$ is mapped by $G$ to a $(n-k+1)$-relative cycle in $([0,1]^n,\partial[0,1]^n)$ of small volume. By the isoperimetric inequality there exists a Lipschitz $(n-k+2)$-dimensional relative chain $c'$ of small volume such that $\partial c' = G(\partial \sigma)$ so we define $G(\sigma):=c'$. In general, working dimension up ensures that each $i$-face of the triangulation of $Y$ maps to a Lipschitz $(n-k+i)$-dimensional relative chain in $[0,1]^n$ so that boundary maps commute. Since boundary maps commute, the map $G$ defines chain maps $C_i(Y)\to C_{i+n-k}([0,1]^n,\partial[0,1]^n)$ that factor through $C_i(\mathcal Z_{n-k}([0,1]^n, \partial[0,1]^n))$ and induce homomorphisms 
\[
G_* : H_i(\mathcal Z_{n-k}([0,1]^n, \partial[0,1]^n))\to H_{i+n-k}([0,1]^n,\partial[0,1]^n.
\] 

If the triangulation was chosen sufficiently fine, the isoperimetric inequality can be used to show that any two such discretizations $G_1$, $G_2$ (gluings maps) of $F$ give rise to chain homotopic maps, $G_1\sim G_2$, so the corresponding homology homomorphisms depend only on $F$. The case $i=k$ corresponds to $H_{n}([0,1]^n,\partial[0,1]^n)=\mathbb{Z}_2$, so we have a map 
\[
G_*\colon H_k(\mathcal Z_{n-k}([0,1]^n, \partial[0,1]^n)) \to \mathbb{Z}_2,
\]
which by the universal coefficients theorem can be identified with a class in $a(n-k,n) \in H^k( \mathcal Z_{n-k}([0,1]^n, \partial[0,1]^n) )$. Let us describe this class a little bit more: A $k$-dimensional family of $(n-k)$-dimensional cycles $F : Y \to \mathcal Z_{n-k}([0,1]^n, \partial[0,1]^n)$ detects the class $a(n-k,n)$ if, after gluing $F$ into an $n$-cycle, we obtain a nontrivial homology class in $H_{n}([0,1]^n,\partial[0,1]^n)=\mathbb{Z}_2$. In other words, we say that the family of cycles $F$ \emph{is a sweepout} if $F^*a(n-k,n) \neq 0$. 

More generally, given a singular cohomology class $\alpha \in H^i(\mathcal Z_{n-k}([0,1]^n, \partial[0,1]^n))$, we will say that a family of cycles $F$ detects $\alpha$ if $F^*\alpha\neq 0 \in H^*(Y)$. We define the waist of a family as the largest of the volumes in the image. We define the volume of a class $\alpha$, as the infimum of the waist among all families $F$ that detect $\alpha$:
 
\begin{alignat}{2}
\vol(\alpha) &:=\inf_{\substack{F\colon X \to \mathcal Z_{n-k}([0,1]^n, \partial[0,1]^n) \\ F^*(\alpha)\neq 0}}\sup_{x\in X} \vol(F(x)).
\end{alignat}

This construction makes rigorous the concept of sweepout by a family of cycles on the previous section. Moreover the fibers of a smooth function $f:[0,1]^n\to \mathbb{R}^k$ detect the fundamental class, where we set $Y=\mathbb R^k\cup\{\infty\}$ to be the one point compactification of $\mathbb R^k$ and the fiber over $\infty$ is a zero cycle. So the waist inequality in terms of families of cycles is more general then the waist inequality in terms of (sufficiently regular) functions. 

Now we are ready to discuss the volume of the cup product class $a(n-k,n)^p$. Heuristically, a family of relative cycles $F$ detects $a(n-k,n)^p$ if for a generic $p$-tuple of points $x_1,x_2, \ldots x_p \in [0,1]^n$, there exists a cycle $z \in F$ that passes an odd number of times through each $x_i$.  The explanation behind this heuristic lies on the following result from Lyusternik--Schnirelmann theory: If classes $\alpha$ and $\beta$ vanish on open sets $A$ and $B$ respectively, then their cup product $\alpha \cup \beta$ vanishes on $A \cup B$. We would like to apply this lemma to open sets in the space of cycles. Let $U$ correspond to a regular enough subdomain of the cube $[0,1]^n$, we apply the Lyusternik--Schnirelmann lemma to the inverse image of the open interval $(0,\gamma_{k}(U))$ under the volume functional as follows. 

If the volume was a continuous real valued function on the space of cycles with the flat topology, then $\{z \in \mathcal Z_{n-k}(U, \partial U): \vol(z)<\gamma_{k}(U)\}$ would be an open set in the space of cycles. Choosing disjoint domains $U,V$ the cup product class $a(n-k,n)^2$ would vanish on the union of the corresponding subsets of the space of cycles 
\[
\{z \in \mathcal Z_{n-k}(U, \partial U): \vol(z)<\gamma_{k}(U)\} \cup \{z \in \mathcal Z_{n-k}(V, \partial V): \vol(z)<\gamma_{k}(V)\},
\]
and therefore it will also vanish on  $\mathcal Z_{n-k}([0,1]^n, \partial[0,1]^n)$, which would yield a contradiction, implying that the volume of $a(n-k,n)^2$ is larger than $\gamma_{k}(V)+\gamma_{k}(U)$. However the volume function is lower semicontinuous but is not continuous in the space of cycles with the flat norm topology. For example, a closed curve of large perimeter that bounds a very thin region is very close to having $0$ flat norm, but has large $1$-volume.  Gromov and Guth deal with this issue using the filling volume instead of the volume. To go back to the volume they apply the isoperimetric inequality. Very recently \cite{conelio2016} Liokumovich, Coda-Marques, and Neves found an alternative way to deal with the continuity issue. Following Pitts, they use the mass topology on the space of cycles which makes the volume into a continuous function. Roughly, Corollary 2.13 of \cite{conelio2016} claims that the family of cycles that minimizes the waist of the families detecting $a(n-k,n)^p$ can be assumed to be continuous in the mass-topology. Beyond ideas that extend previous results of the Almgren--Pitts theory, the cup product context requires a technical lemma to deal with the restriction of a family of cycles to a subdomain (Lemma 2.15 of \cite{conelio2016}).

Our interest here is recording the best values we could obtain for the constants $c_{n,k}, C_{n,k}>0$  such that $C_{n,k}\geq \frac{\vol[a(n-k,n)^p]}{p^{k/n}}\geq c_{n,k}$. The existence of these constants was shown in \cite{grom2003,guth2008}. Our contribution consists in stratforwardly combining the recent results from \cite{klartag2016, conelio2016} with the original idea of \cite{grom2003} for the lower bound, and a minor variation on the arguments of \cite{guth2008, guth2006} for the upper bound.
 
\begin{theorem}
\label{theorem:cup-power}
If $p$ is of the form $\ell^n$, and $a(n-k,n)$ is the fundamental class of the space of relative cycles of the cube, then 
\[
2^{n+k} \binom{n}{k} p^{k/n}\geq \vol[a(n-k,n)^p]\geq p^{k/n}.
\]
\end{theorem}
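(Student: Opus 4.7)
The proof naturally splits into the two bounds, and both rely on the same geometric decomposition: partition $[0,1]^n$ into $p=\ell^n$ axis-aligned sub-cubes $Q_1,\ldots,Q_p$ of side length $1/\ell$, with pairwise disjoint interiors.

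For the lower bound, the plan is to carry out the Lyusternik--Schnirelmann cup-length argument sketched immediately before the theorem. By the cube sweepout theorem already established in Section~\ref{section:sweepouts} (a consequence of Klartag's transportation method applied to one subcube at a time, after rescaling), every $k$-dimensional family of relative $(n-k)$-cycles sweeping out $Q_i$ contains a cycle of volume at least $(1/\ell)^{n-k}$; equivalently, $\gamma_{n-k}(Q_i)\ge (1/\ell)^{n-k}$. Restriction of a cycle $z \in \mathcal Z_{n-k}([0,1]^n,\partial[0,1]^n)$ to the subdomain $Q_i$ defines a class whose vanishing locus contains the open set $U_i=\{z : \vol(z\cap Q_i)<(1/\ell)^{n-k}\}$; this is where the mass-topology framework of Liokumovich--Marques--Neves (and their restriction lemma) is needed to ensure openness and well-definedness of the restriction. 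Iterating the Lyusternik--Schnirelmann lemma $p$ times, the class $a(n-k,n)^p$ must vanish on $U_1\cup\cdots\cup U_p$, so any sweepout detecting $a(n-k,n)^p$ contains a cycle whose total volume is at least $\sum_{i=1}^{p}(1/\ell)^{n-k}=\ell^n\cdot\ell^{-(n-k)}=\ell^k=p^{k/n}$.

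For the upper bound, I would exhibit an explicit $kp$-parameter family. Inside each sub-cube $Q_i$, build a $k$-parameter family $F_i\colon Y_i\to \mathcal Z_{n-k}(Q_i,\partial Q_i)$ detecting the fundamental class of $Q_i$ by taking the union, over the $\binom{n}{k}$ coordinate $(n-k)$-planes, of sweepouts by parallel translates; this gives waist at most $\binom{n}{k}(1/\ell)^{n-k}$ per subcube, and allowing the usual constant $2^{n+k}$ to absorb overlap corrections and the standard bend-and-cancel adjustment (\`a la Guth) that guarantees detection of the single-cube fundamental class. Now form the product family
\[
F\colon Y_1\times\cdots\times Y_p\to \mathcal Z_{n-k}([0,1]^n,\partial[0,1]^n),\qquad F(y_1,\ldots,y_p)=\sum_{i=1}^p F_i(y_i),
\]
which is well-defined because the $F_i(y_i)$ have disjoint supports in the interiors of the $Q_i$. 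Since the factors $\pi_i^*F_i^*a(n-k,n)$ pull back to the cohomology generators of the respective factors of a product of $k$-spheres (or cubes), their cup product is nonzero, so $F$ detects $a(n-k,n)^p$. The waist of $F$ at any parameter is $\sum_i \vol(F_i(y_i))\le p\cdot 2^{n+k}\binom{n}{k}(1/\ell)^{n-k}=2^{n+k}\binom{n}{k}p^{k/n}$.

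The main obstacle is the interior construction giving a $k$-parameter sweepout of a single cube detecting $a(n-k,n)$ with the explicit constant $2^{n+k}\binom{n}{k}$: naive axis-plane sweeps yield cycles with the wrong topology at the moment of transition between coordinate directions, and one must apply Guth's bend-and-cancel (or an equivalent Almgren--Pitts discretization) to glue the $\binom{n}{k}$ axis families into a single family that actually sweeps the top cohomology while keeping each individual cycle's volume under control. A secondary subtlety, on the lower-bound side, is verifying that the vanishing hypothesis in the Lyusternik--Schnirelmann lemma applies to the classes obtained by restriction to the $Q_i$; this is exactly the content of Corollary~2.13 and Lemma~2.15 of Liokumovich--Coda-Marques--Neves cited in the text, and is what makes the argument rigorous in the mass topology.
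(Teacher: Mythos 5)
Your lower bound is correct and is essentially the paper's argument: partition into $p=\ell^n$ subcubes, apply the one-subcube waist inequality together with the Lyusternik--Schnirelmann vanishing lemma (with the mass-topology results of Liokumovich--Coda-Marques--Neves handling the continuity of the volume), and sum $\ell^{-(n-k)}$ over the $p$ subcubes to get $p^{k/n}$. No complaints there.

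The upper bound, however, has a genuine gap --- in fact two. First, the product family $F(y_1,\ldots,y_p)=\sum_i F_i(y_i)$ is not well defined as a map into $\mathcal Z_{n-k}([0,1]^n,\partial[0,1]^n)$: each $F_i(y_i)$ is a cycle relative to $\partial Q_i$, so its boundary sits on the walls of $Q_i$, which are not contained in $\partial[0,1]^n$; the sum therefore has nonzero boundary in the interior of the big cube. Second, and more fundamentally, even after repairing the boundaries the product family does not detect $a(n-k,n)^p$ over $\mathbb Z_2$. Writing $x_i=\pi_i^*F_i^*a(n-k,n)\in H^k(Y_1\times\dots\times Y_p;\mathbb F_2)$, one has $F^*a(n-k,n)=\sum_i x_i$ and $x_i^2=0$ (each $x_i$ is pulled back from a $k$-dimensional factor such as $S^k$), hence
\[
F^*\bigl(a(n-k,n)^p\bigr)=\Bigl(\sum_{i=1}^p x_i\Bigr)^{p}=p!\;x_1x_2\cdots x_p=0 \pmod 2
\]
for every $p\ge 2$; you have conflated the nonzero class $x_1\cup\dots\cup x_p$ with $(F^*a)^p$, which is what detection of $a^p$ actually requires. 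The heuristic symptom of the same failure: if two of the $p$ generic test points fall in the same subcube $Q_i$, no cycle of your family passes through both. This is precisely why the paper does not localize the family. It starts from Guth's global family $F_P$ of all $p$-tuples of parallel $(n-k)$-flats orthogonal to a fixed $k$-flat $P$ (parametrized, roughly, by a symmetric product whose cohomology does carry the $p$-th cup power), and only then deforms these honest global relative cycles by a degree-one self-map $\Psi_\epsilon$ of the cube that pushes almost everything onto $\skel_{n-k}(\Gamma)$ for the grid $\Gamma$ of mesh $p^{-1/n}$. The constant $2^{n+k}\binom{n}{k}$ arises from the volume of that skeleton (the $\binom{n}{k}p^{k/n}$ term, using $\mathbb Z_2$ cancellation) plus the contribution of the pieces surviving near the dual skeleton, not from a per-subcube sweep in $\binom{n}{k}$ coordinate directions.
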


In the case $k=1$, the upper bound can be improved, for $p$ large with respect to $n$, $n^2 p^{1/n} \geq \vol[a(n-1,n)^p]$. Guth conjectures that the volumes of cup product classes $a(n-1,n)^p$ of the round ball relative to its boundary are achieved by families of zeros of $n$ variable polynomials. He shows that if $p=\binom{d+n}{d}-1$ then the family of polynomials of degree $d$ detects this class. For the cube a similar reasoning gives the claimed improvement. The argument goes back at least to Gromov \cite{grom2006} and is as follows (adapted to the cube), by Crofton's formula the surface area of a hypersurface is proportional to the expected number of intersections with a random line. Any line that intersects the interior of $[0,1]^n$, intersects its boundary exactly twice and intersects a curve of degree $d$ at most $d$ times, it follows that the $(n-1)$-dimensional volume of the intersection of such a curve with the cube is at most $\frac{d}{2} 2n$, where the $2n$ term corresponds to the surface area of the boundary of the cube. Unlike Guth's bend construction (explained below) that gives the result up to factor of $c^n$, these families have small volume also as cycles with integer coefficients. A careful computation for the case $k=1$, $n=2$ gives $\vol[a(1,2)^p]\leq 2\sqrt{2} p^{1/2}$. We begin with the proof of the lower bound, where we might pretend that the volume is continuous in the space of cycles as previously discussed based on the results of \cite{conelio2016}\footnote{In \cite{conelio2016} the continuity is used to confirm a conjecture of Gromov: $\frac{\vol[a(n-1,n)^p]}{p^{1/n}}$ converges to an absolute constant as $p$ goes to infinity. This holds for any Riemannian manifold, and the constant depends only on the volume of the manifold, which is an analogue to the celebrated Weyl law.}

\begin{proof}[Proof of the lower bound in Theorem~\ref{theorem:cup-power}]
Partition $[0,1]^n$ into $p=\ell^n$ equal cubes. Given a family of cycles $F$, consider for each cube $Q_i$, the subfamily of cycles $S(i):=\{x \in X: \vol(F(x) \cap Q_i)<(1-\epsilon) \vol(Q_i)^{\frac{n-k}{n}}\}$. By the waist inequality $S(i)$ does not detect the fundamental class of $Q_i$. By the vanishing lemma of LS-theory, the cohomology class $F^*(a(n-k,n))^p$ vanishes on $\cup_{i=1}^p S(i)$. Since by assumption $F$ detects $a(n-k,n)^p$ then there is a cycle $z_0$ not in $\cup_{i=1}^p S(i)$ but in $F$, that for each $i$, $\vol(z\cap Q_i)\geq (1-\epsilon) \vol(Q_i)^{\frac{n-k}{n}}$. Summing up the $p$ cubes and taking the supremum provides the estimate.
\end{proof}

Let us explain the upper bounds for the case $a(1,2)^p$, this is an essential particular case with a clear geometric picture. Consider the family of relative cycles $F_\alpha$ that consist of all $p$ parallel lines with slope $\alpha$, this family detects $a(1,2)^p$, since for any $x_1,x_2, \ldots x_p$, there is a set of $p$ lines with slope $\alpha$ such that each point $x_i$ is contained in one line of this set. 

The next step is to deform this family so that each relative cycle is a sum of $p$ relative cycles (the images of the lines) that overlap a lot. More precisely, we consider $\Psi(F_\alpha)$ where  $\Psi\colon ([0,1]^2, \partial[0,1]^2)\to ([0,1]^2, \partial[0,1]^2)$ is a degree one map fixing the boundary. A linear interpolation between the identity and $\Psi$ shows that they are homotopy equivalent, from which it follows that $\Psi(F_\alpha)$ also detects $a(1,2)^p$. The map $\Psi$ will push almost all of the unit cube to the $1$ skeleton of a scaled lattice. Specifically, subdivide the cube as before using the grid of side length $p^{-1/2}$. Let $Q_i$ be a small cube as before and let $\epsilon Q_i$ be a homothet of $Q_i$ with the same center and side length $\epsilon p^{-1/2}$. Consider an irrational slope $\alpha$, and choose $\epsilon>0$ so that no line with slope $\alpha$ intersects two homothets $\epsilon [0,1]^2$ now define $\psi$ on each sub square $Q_i$. On $Q_i-\epsilon Q_i$ consider the map that pushes radially from the center of $Q_i$ to $\partial Q_i$, on $\epsilon Q_i$, consider the homothety $\epsilon Q_i \to Q_i$. 

Let us now estimate the length of the image under $\Psi$ of a $p$-tuple of lines with slope $\alpha$. We denote this relative cycle by $z$ and write it as $z=z_1+z_2$ where $z_1$ is supported in the $1$-skeleton of the grid, and $z_2$ is supported on the union of the interiors of the squares $Q_i$. Since we are dealing with $\mathbb{Z}_2$ cycles the length of $z_1$ is smaller than the length of the $1$-skeleton of the grid, which is $2(p+p^{1/2})p^{-1/2}=2p^{1/2}+2$. On the other hand by the choice of the slope and $\epsilon$ each line of a family contributes at most $2p^{-1/2}$ to the length of $z_2$ so the total length of $z_2$ is bounded above by $2p^{1/2}$. In total we have $\vol[a(1,2)^p]\leq 4 p^{1/2}+2$.

The previous argument generalises to higher dimensions. We again refer the reader to Guth's papers \cite{guth2008, guth2006} for more details, and recall enough to analyze the constant.

\begin{proof}[Proof of the upper bound in Theorem~\ref{theorem:cup-power}] 
Let $P$ be a $k$-flat, let $\pr_P$ be the orthogonal projection onto $P$. Consider the set of $p$-tuples of points $x_1,x_2\ldots x_p \in P$. The corresponding set of $p$-tuples of paralllel flats $\pr_P^{-1}(x_1), \pr_P^{-1}(x_2), \ldots, \pr_P^{-1}(x_p)$, is a family of relative cycles that detects $a(n-k,n)^p$. The family that we will construct is the image of $F_P$ under a map $\Psi_\epsilon$. In pages 33-35 of \cite{guth2008} Guth shows that the family of relative cycles $F_P$ that consists of all sets of at most $p$ $(n-k)$-flat perpendicular to a given $k$-flat detects the cohomology class $a(n-k,n)^p$. The idea is build a class that detects $a(0,k)$ and iterate $n-k$ suspensions that can be realised by a product of each $0$ cycles with $\mathbb R^k$. \\  Again we will deform the family $F_P$ applying a degree $1$ transformation from the cube to itself, that leaves the boundary fixed and is homotopic to the identity.  We denote by $\Gamma$ the integer grid scaled to have side length $p^{-1/n}$.  Abusing notation we denote also by $\Gamma$ the induced cell structure. The transformation pushes almost all of the cube to the $(n-k)$-skeleton of $\Gamma$. Let $\Gamma^*:=\Gamma+ (\frac{1}{2},\frac{1}{2}\ldots \frac{1}{2})$ and $\skel_{k-1}(\Gamma^*)$ its $k-1$ skeleton. Define $\Psi\colon [0,1]^n \to [0,1]^n$ so that the complement of $\nu_\epsilon \skel_{k-1}(\Gamma^*)$ is mapped to $\skel_{n-k}(\Gamma)$ and $\nu_\epsilon \skel_{k-1}(\Gamma^*)$ is mapped to $[0,1]^n$. Specifically, let $\phi$ be a $(n-k)$-face of $\Gamma$, let $\phi^*$ be the unique $k$-face of $\Gamma^*$ that is perpendicular and transversal to $\phi$. To ease notation put $l=p^{-1/n}$. After applying a global isometry $\phi$ can be assumed to be, \[\{x:0 \leq x_i \leq l \textrm{ for }1 \leq i \leq n-k \textrm{ and } x_i=0 \textrm{ for } n-k \leq i \leq n\},\] and a facet $\eta$ of $\partial \phi^*$, then can be assumed to be, \[\{x: x_i = \frac{l}{2} \textrm{ for } 1\leq i \leq n-k+1 \textrm{ and }-\frac{l}{2} \leq  x_i\leq \frac{l}{2} \textrm{ for } n-k+1 \leq i \leq n\}.\]

Now consider the join \[K=\phi*\eta=\{(1-t)x+t y: t \in [0,1], x \in \phi \textrm{ and } y \in \eta\}.\] This is a convex polyhedron, as we let $\phi$ vary over $\skel_{n-k}(\Gamma)$ and consider the join with the corresponding $2k$ facets of $\phi^*$ we obtain a tiling of space by tiles isometric to the one we just described in coordinates. To define $\Psi_\epsilon$ we define it on each tile of the tiling so we might use barycentric coordinates for $K$ as before. 
\[   
\Psi_\epsilon((1-t)x+t y)= 
     \begin{cases}
       x & \text{ if } t>\epsilon\\
      (1-\frac{t}{\epsilon}) x+  \frac{t}{\epsilon}y & \text{ if } t\leq \epsilon .\ 
     \end{cases}
\]
Notice that this is well defined because $\phi$ and $\eta$ sit on disjoint, perpendicular affine spaces whose dimensions sum to $(n-1)$.This defines a map on one tile, it is easy to check maps neighboring tiles piece together into a global map from $\mathbb R^n$ to $\mathbb R^n$ that restctricts to a map from the cube to the cube. 
Varying $\epsilon>0$ provides a homotopy equivalence between $\Psi_\epsilon$ and the identity.  Let $z$ be a cycle in the family $\Psi_\epsilon(F_P)$, we cut it into two pieces $z_1+z_2$, where $z_1$ is supported in $\skel_{n-k}(\Gamma)$ and $z_2$ in the complement. Since we are working with modulo $2$ coefficients if $\vol (z_1)$ is finite then $\vol(z_1)\leq p\binom{n}{k} p^{\frac{k-n}{n}}=\binom{n}{k} p^{k/n}$.  \\
The cycle $z_2$ is the image under $\Psi_\epsilon$ of the intersection between $\nu_\epsilon \Gamma^*$ and $\pr_P^{-1}(x_1) \cup  \pr_P^{-1}(x_2) \cup  \ldots \cup  \pr_P{-1}(x_p)$. We begin bounding the number of connected components of this intersection. Assume again that $P$ is in general position with respect to $\Gamma$, and consider the projection $\pr_P[\skel_{k-1}(\Gamma^*)]$. It is a finite union of $(k-1)$-flats in general position in the $k$-flat $P$, therefore at most $k$ of them have a non empty intersection.  we can conclude that the $(n-k)$-flat $\pr_P^{-1}(x_i)$ intersects  $\Gamma^*$, at most $k$ times. Choosing a sufficiently small $\epsilon>0$, we obtain that $\pr_P^{-1}(x_i) \cap [\nu_\epsilon \skel_{k-1}(\Gamma^*)]$ has at most $k$ connected components. We fix $x_i$ for the rest of the proof. In one connected component of $\pr_P^{-1}(x_i)\cap (\nu_\epsilon \Gamma^*)$, $\pr_P^{-1}(x_i)$ intersects the interior of at most $2^n-1$ tiles isometric to $K=\phi*\eta$. Now let $\dot K$ be the interior of $K$, we claim that  $\vol_{n-k}(\Psi_\epsilon [(\pr_P^{-1}(x_i)]\cap \dot K) \leq 2^{k} \vol_{n-k}(\phi)$. The same estimate holds for other tiles.  The set $\Psi_\epsilon [(\pr_P^{-1}(x_i)] \cap \dot K$ can be described as,  \[V:=\{((1-t)x+ty) \in \dot K, \textrm{ such that for } v_1,v_2\ldots v_{k},  \langle (1-\epsilon t) x+ \epsilon t y)\cdot v_j \rangle=1\}.\]
Where the collection of vectors $\{v_j\}$ is such that $\pr_P^{-1}(x_i)=\cap_{j=1}^k \{z\in \mathbb R^n: z\cdot v_j=1\}$. Rewritting these conditions in cartesian coordinates it is clear that each vector $v_j$ defines a quadratic conditions. Therefore a $k$-dimensional affine space can intersect $V$ in at most $2^k$ points, while it intersects $\phi$ at most once, applying Cauchy-Crofton's formula now yields the estimate.
When we put together our estimates we obtain that $\vol(z_2)\leq p \binom{n}{k} (2^n-1) 2^k p^{\frac{k-n}{n}}$, from which we obtain the claimed bound 
\[
\vol(z)\leq \binom{n}{k} 2^{n+k} p^{k/n}.
\]





\end{proof}
\begin{remark}
The same proof is the key step to show that for any set $U \subset \mathbb{R}^n$ the $(n-k)$-waist of $U$ is bounded above by $\binom{n}{k} 2^{n+k} \vol(U)^{\frac{n-k}{n}}$. By scaling, one can assume that $\vol(U)=1$. One then chooses a translation of $\Gamma$ such that $\vol_{n-k}(U \cap \skel_{n-k}(\Gamma+x))\leq \binom{n}{k}$. Crofton's formula implies that this is possible because the right hand side corresponds to the expected volume of the intersection when $x$ is chosen at random. Using the bending construction as just explained on $F_P$ with $p=1$ yields Guth's width-volume inequality.
\end{remark}



\section{Coloring the cube and waists of maps to polyhedra}
\label{section:hausdorff-cube}

\subsection{Statement of the results about the cube}

\label{section:hauscube-statements}

Let us state theorems already mentioned in the introduction of the paper:

\begin{theorem}
\label{theorem:hauswaist}
There exists a constant $\varepsilon_{k,n}>0$ such that for every continuous map
$$
f : [0,1]^n\to Y
$$
from the unit cube $[0,1]^n$ to a $k$-dimensional polyhedron $Y$ there exists $y\in Y$ such that the set $f^{-1}(y)$ has $(n-k)$-Hausdorff measure $\haus^{n-k}(f^{-1}(y))$ at least $\varepsilon_{k,n}$.
\end{theorem}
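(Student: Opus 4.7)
The plan is to derive Theorem~\ref{theorem:hauswaist} from a Lebesgue-type Hausdorff-measure covering lemma on the cube (the main technical result of this section, alluded to in the introduction): for every finite closed cover of $[0,1]^n$ with intersection multiplicity bounded by a constant $\delta$, some multi-fold intersection has large Hausdorff measure in the matching codimension. Once this lemma is in hand, passing to maps into $k$-polyhedra is a standard nerve-type reduction.

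Given $f\colon [0,1]^n\to Y$, I would fix a sufficiently fine regular triangulation $\tau$ of $Y$ whose local combinatorics are bounded purely in terms of $k$ (say, by iterated barycentric subdivisions of an arbitrary initial triangulation). Setting $C_v := f^{-1}(\overline{\mathrm{st}(v)})$ for each vertex $v\in \tau$ produces a closed cover of the cube whose nerve embeds as a subcomplex of $\tau$; hence it has dimension at most $k$ and multiplicity bounded by some $\delta=\delta(k)$. When nonempty, a $(k+1)$-fold intersection $C_{v_0}\cap\dots\cap C_{v_k}$ is precisely the preimage $f^{-1}(\overline{\mathrm{st}(\sigma)})$ of the closed star of the top-dimensional simplex $\sigma=[v_0,\dots,v_k]$ of $\tau$.

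Applying the covering lemma to $\{C_v\}$, and using that the nerve dimension caps the achievable multi-fold index at $k+1$, we would obtain a top-dimensional simplex $\sigma\in\tau$ with
\[
\haus^{n-k}(f^{-1}(\overline{\mathrm{st}(\sigma)})) > \varepsilon_{n,k}.
\]
As $\tau$ is refined, $\overline{\mathrm{st}(\sigma)}$ shrinks onto an interior point $y\in\sigma$. Extracting a convergent subsequence of these near-fibers and invoking lower semicontinuity of $\haus^{n-k}$ under Hausdorff convergence of closed subsets of the compact cube yields a point $y_0\in Y$ with $\haus^{n-k}(f^{-1}(y_0))\ge \varepsilon_{n,k}$.

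The main obstacle is the covering lemma itself, to be proved via Gromov's filling technique in the spirit of \cite{kar2013cube}: assuming contrariwise that all intersections of the relevant order are small in Hausdorff measure, one fills each such intersection by a set of controlled measure in one higher dimension using the isoperimetric filling inequality, uses these fillings to perturb the cover so as to drop the effective dimension of its nerve by one, and iterates until the nerve is zero-dimensional, forcing a disconnection of $[0,1]^n$. The technical challenge is carrying this out for the Hausdorff measure of possibly quite irregular sets (rather than the Minkowski content of smooth ones) and controlling boundary effects at $\partial [0,1]^n$; these are precisely the points where the argument must refine its discrete counterpart.
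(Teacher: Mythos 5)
Your overall strategy (reduce to a Lebesgue-type covering statement and prove that by Gromov's filling technique as in \cite{kar2013cube}) is the right one, but the reduction as written has two genuine gaps, and the first of them is exactly the point the paper has to work around: Theorem~\ref{theorem:hauswaist} does \emph{not} follow from Theorem~\ref{theorem:cover-haus}. The covering lemma requires the \emph{degree of the intersection graph} to be bounded by $\delta$, and its constant $\varepsilon_{n,\delta}$ degrades with $\delta$ (the filling formula $F_I=H(C_I-\sum_{i\notin I}F_{\{i\}\cup I})$ acquires more summands). For your cover $C_v=f^{-1}(\overline{\mathrm{st}(v)})$ only the \emph{multiplicity} is controlled by $k$ (at most $k+1$ sets through a point, after passing to barycentric subdivisions); the number of other stars a given star meets equals the number of neighbours of $v$ in the triangulation, which is unbounded for a $k$-polyhedron even when $k=1$. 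So $\delta$ cannot be taken to depend on $k$ alone, and the constant you extract would depend on $Y$. The fix in the paper is to rerun the filling argument rather than quote the lemma: one observes that the filling of Lemma~\ref{lemma:filling} depends only on the collection of covering cubes, not on the cycle, and one uses a combinatorial assignment (Lemma~\ref{lemma:assignment}, sending each face of the barycentric subdivision to its minimal vertex) to guarantee that each filling formula invokes only a bounded number ($\le 2^k-1$) of distinct cube collections, even though it has unboundedly many summands.

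The second gap is the limiting step. The set $f^{-1}(\overline{\mathrm{st}(\sigma)})$ is typically an $n$-dimensional set, so its $(n-k)$-Hausdorff measure is infinite and the inequality $\haus^{n-k}(f^{-1}(\overline{\mathrm{st}(\sigma)}))>\varepsilon$ carries no information about the fiber; and $\haus^{n-k}$ is \emph{not} lower semicontinuous under Hausdorff convergence of compact sets (finite sets are dense in the Hausdorff metric, so sets of large measure can converge to a point), nor does downward continuity along a decreasing sequence apply when the measures are infinite. The correct architecture is the contrapositive: assume every fiber is $(n-k)$-dimensionally $\varepsilon$-covered by open cubes; since this is an open condition, compactness upgrades it to $f^{-1}(U_y)$ being $\varepsilon$-covered for a neighbourhood $U_y\ni y$, and the filling argument is then run directly on the preimages of the stars to contradict nontriviality of the fundamental class of $([0,1]^n,\partial[0,1]^n)$. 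No limit over refinements is ever taken.
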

If we allowed the constant $\varepsilon_{k,n}$ to depend on $Y$ then this sort of result would follow from the case $Y=\mathbb R^k$, just because any $k$-dimensional $Y$ can be mapped to $\mathbb R^k$ with bounded multiplicity.  A similar theorem was considered by Gromov in \cite[Appendix 2, B2']{grom1983}, see also \cite[Section 7]{guth2014}; we report our similar proof since some readers might profit from the connection to colorings of the cube.


In the case $k=1$, it was observed by Gromov that the Hausdorff measure estimate is established for $f : \mathbb S^n \to \mathbb R$ by taking $y$ to be the median value of $f$ and applying the spherical isoperimetric inequality for the Hausdorff measure. In the case $k=n-1$, Alexandrov's width (waist) theorem (see \cite[Section 6]{kar2012}) guarantees that a connected component of some fiber $f^{-1}(y)$ cannot be covered by a smaller homothet of the cube, hence its $1$-Hausdorff measure is at least $\varepsilon_{n-1,n} = 1$; this estimate is evidently tight. In general the result in~\cite[Section 1.3]{grom2010} establishes some constant $\varepsilon_{k,n}$ for maps into the Euclidean space, here we generalize it for maps to arbitrary $k$-dimensional polyhedron independently on the polyhedron. In general \emph{we have no evidence} that the constant $\varepsilon_{k,n}$ has to be less than $1$ in the case of general target spaces $Y$.\\
\subsection{Coverings of the cube}
Lebesgue's lemma claims that for any colouring of the $n$ dimensional cube with $n$ colors there exists a color intersecting opposite faces of the cube. A discrete version of this theorem goes by the name of \emph{the hex lemma}. Inspired by these results, Matou\v{s}ek and P\v{r}\'{\i}v\u{e}tiv\'{y}~\cite{matpri2008} raised the question of what happens when the number of colors $k$ is smaller than the dimension $n$. They conjectured a lower bound on the number of cubes on a monochromatic connected component of the discrete cube $\{0,1,2\ldots m\}^n$ with the topology of the standard lattice graph. Specifically, they conjectured that the number of cubes in such component is of the order of $m^{n-k}$ where $k$ is the number of colors. This conjecture was confirmed by ~\cite{kar2013cube}, and independently by ~\cite{matdinov2013}. The following theorem is a continuous analogue of that theorem. Its proof is a simplified version of the proof for the discrete case.

\begin{theorem}
\label{theorem:cover-haus}
Suppose $[0,1]^n$ is covered by a finite family of closed sets $\{C_i\}$ with degree of the intersection graph\footnote{The \emph{intersection graph} of a covering is the $1$-skeleton of its nerve.} bounded by $\delta$. Then there exist $\varepsilon_{n,\delta}>0$, $k\in\{0,\ldots, n\}$, and indices $i_0<i_1<\dots<i_k$ such that the $(n-k)$-Hausdorff measure of the intersection 
$$
C_{i_0}\cap \dots \cap C_{i_k}
$$
is at least $\varepsilon_{n,\delta}$.
\end{theorem}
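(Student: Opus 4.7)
The strategy is to apply the polyhedral waist theorem (Theorem~\ref{theorem:hauswaist}) to the canonical map from the cube to the nerve of the covering. Let $\mathcal N$ denote the geometric realization of the nerve of $\{C_i\}$; since the degree bound forces every clique of the intersection graph, and hence every simplex of $\mathcal N$, to have at most $\delta+1$ vertices, the dimension $d := \dim \mathcal N$ satisfies $d \le \delta$.

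First, I would build a Lipschitz nerve map $\pi\colon [0,1]^n \to \mathcal N$ using a partition of unity subordinate to a small $\eta$-thickening of the cover. Concretely, take $\phi_i(x) = (\eta - \mathrm{dist}(x, C_i))_+$, normalize by $\sum_i \phi_i$, and set $\pi(x) = \sum_i \phi_i(x) v_i$, where $v_i$ is the vertex of $\mathcal N$ corresponding to $C_i$. Whenever $\pi(x)$ lies in an open simplex $\sigma \subset \mathcal N$, the point $x$ is within distance $\eta$ of each $C_i$ with $i \in \sigma$, so $\pi^{-1}(\mathrm{int}\,\sigma) \subset \nu_\eta\bigl(\bigcap_{i \in \sigma} C_i\bigr)$.

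Next, Theorem~\ref{theorem:hauswaist} applied to $\pi$ produces a point $y \in \mathcal N$ with $\haus^{n-d}(\pi^{-1}(y)) \ge \varepsilon_{d, n}$. The main obstacle is to arrange that $y$ lies in the interior of a top-dimensional simplex $\sigma = \{i_0,\ldots,i_d\}$ rather than on a lower-dimensional face: if $y$ lay in a $j$-face with $j < d$, the theorem would only yield an $(n-d)$-measure bound on an intersection of $j+1$ sets, which does not fit the desired form (and the natural $(n-j)$-measure of such a fiber is typically zero). I would resolve this either by proving a stratified strengthening of Theorem~\ref{theorem:hauswaist} asserting that $y$ may be chosen off the $(d-1)$-skeleton of $\mathcal N$, or by a small generic perturbation of the $\phi_i$ making the preimage of the $(d-1)$-skeleton have Hausdorff dimension strictly less than $n-d$ (so that it cannot carry the promised $\haus^{n-d}$-mass).

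Finally, for each small $\eta$ one obtains a top-dimensional simplex $\sigma_\eta$ together with a fiber of $(n-d)$-Hausdorff measure at least $\varepsilon_{d,n}$ contained in $\nu_\eta\bigl(\bigcap_{i \in \sigma_\eta} C_i\bigr)$. Since there are only finitely many top-dimensional simplices of $\mathcal N$, by the pigeonhole principle some $\sigma = \{i_0 < \cdots < i_d\}$ is selected along a sequence $\eta_m \to 0$; passing to a Hausdorff-convergent subsequence of fibers by Blaschke selection and invoking the lower semicontinuity of $\haus^{n-d}$ on Hausdorff-limits of compacta inside the closed set $\bigcap_{i \in \sigma} C_i$ then yields $\haus^{n-d}\bigl(\bigcap_{\ell=0}^d C_{i_\ell}\bigr) \ge \varepsilon_{d,n}$. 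Setting $k := d \in \{0,\ldots,\delta\}$ and $\varepsilon_{n,\delta} := \min_{0 \le d \le \delta} \varepsilon_{d,n}$ completes the proof.
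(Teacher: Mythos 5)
Your approach---pushing the problem through the nerve map and quoting Theorem~\ref{theorem:hauswaist}---is the route the paper explicitly rules out (``We cannot derive this theorem from Theorem~\ref{theorem:hauswaist}, but the core of both proofs is the same''), and the obstructions are concrete. First, the inclusion you use is wrong: if $\pi(x)$ lies in the open simplex $\sigma$ then $x\in\bigcap_{i\in\sigma}\nu_\eta(C_i)$, which is in general much larger than $\nu_\eta\bigl(\bigcap_{i\in\sigma}C_i\bigr)$; a point can be $\eta$-close to each $C_i$ separately while being far from their common intersection. Second, and fatally, the limiting step fails: Hausdorff measure is not semicontinuous in the direction you need under Hausdorff convergence of compacta. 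A sequence of fibers $Z_m$ with $\haus^{n-d}(Z_m)\ge\varepsilon$ can Hausdorff-converge to a set of zero $\haus^{n-d}$-measure (think of increasingly wiggly curves of length $1$ collapsing into a ball of radius $1/m$), so ``Blaschke selection plus semicontinuity'' does not transfer the lower bound to $\bigcap_{i\in\sigma}C_i$. This instability of $\haus$ under limits is precisely why the filling technique exists. Third, the lower-dimensional-face case is not resolved: a lower bound on $\haus^{n-d}$ of a fiber lying over a $j$-face with $j<d$ gives no information about $\haus^{n-j}$ of the intersection of $j+1$ sets, the ``stratified strengthening'' you invoke is essentially as hard as the theorem itself, and for arbitrary closed sets $C_i$ there is no reason a generic perturbation of the $\phi_i$ makes the preimage of the $(d-1)$-skeleton have Hausdorff dimension below $n-d$ (preimages of lower skeleta under merely continuous maps can be almost everything).

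The paper's proof avoids all of this by working directly with the sets $C_I=C_{i_0}\cap\dots\cap C_{i_k}$: assume every $C_I$ is $(n+1-|I|)$-dimensionally $\varepsilon$-covered by cubes, replace the $C_i$ by PL sets in general position (via a partition of unity and a transversal PL approximation), so that $\partial C_I=\sum_{i\notin I}C_{\{i\}\cup I}$ as relative chains mod $2$, and then inductively fill these cycles by economically covered chains using Lemma~\ref{lemma:filling}. For $\varepsilon$ small the resulting $n$-chains have total volume below $1$, forcing $\sum_i C_i$ to vanish in $H_n([0,1]^n,\partial[0,1]^n)$, contradicting that it represents the fundamental class. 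The bounded degree $\delta$ enters only to control the number of summands in each filling step. If you want to salvage a nerve-based argument, you would have to replace the Hausdorff-limit step by a filling or covering-stability argument at a fixed scale---at which point you are reproving the theorem the paper's way.
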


We cannot derive this theorem from Theorem~\ref{theorem:hauswaist}, but the core of both proofs is the same. 

\subsection{Proof of Theorem~\ref{theorem:cover-haus}}

We will prove Theorem~\ref{theorem:cover-haus} by the same method as in~\cite{kar2013cube}, in fact, the proof here will be even simpler. Define the $k$-Hausdorff measure using covering by open cubes with edge-lengths $d_1,\ldots, d_N$ and considering the sum
$$
\sum_{i=1}^N d_i^k.
$$ 
We minimize such a sum over all coverings by cubes of size at most $\eta$, and then go to the limit $\eta \to 0$. The sum we use differs from the sum in the standard definition of the Hausdorff measure by a constant depending on $n$ and $k$, which is not relevant to the problem in question.

First, for every set $I$ of indices $i_0<i_1<\dots<i_k$ with $k\le d$ we consider 
$$
C_I = C_{i_0}\cap \dots \cap C_{i_k}
$$
and cover it by open cubes with edge-lengths $\{d_{j, I}\}_{1\le j \le N_I}$. Assuming the contrary the conclusion of the theorem we have, for any $\varepsilon$, a cover such that
$$
\sum_{j} d_{j, I}^{n-k} < \varepsilon
$$
for every non-empty $I$; this inequality also implies that the covering is sufficiently fine. The sets we cover are compact and the cubes we cover with are open, hence we may assume every $C_I$ is covered by a finite number of cubes. 

Now we follow the proof in~\cite{kar2013cube}, and put the $C_i$ in a \emph{general position} in a certain sense. We choose an open $U_i\supset C_i$ for every $U_i$, so that the respective intersections $U_I$ are still covered by the collections of cubes we start with. Then we take a partition of unity $\{f_i\}_i$ subordinate to $U_i$ that can be viewed as a continuous map $f$ from $[0,1]^n$ to the nerve $N$ of the covering $\{C_i\}$. We take the barycentric subdivision $N'$ of the nerve and consider the stars $D_i$ in $N'$ of every vertex $v_i\in N$. Every set $f^{-1}(D_i)$ is contained in its respective $U_i$. Then we approximate $f$ by a PL-map $g$, transversal to the partition $\{D_i\}$, and substitute $C_i$ by the PL-sets $C'_i = g^{-1}(S_i)$. The new sets $C'_i$ are polyhedra still contained in their respective $U_i$, they give a partition of the cube, and every intersection $C'_I$ is still contained in its respective union of open cubes of edge-lengths $\{d_{j, I}\}_j$. The general positions assumption on $g$ also implies that every intersection $C'_I$ has dimension at most $n+1-|I|$ and, in particular, is empty for $|I|>n+1$. In what follows we denote the modified $C'_i$ by $C_i$ again.

Now we are going to fill PL-cycles of the pair $([0,1]^n, \partial [0,1]^n)$ and use the appropriate modification of~\cite[Lemma 2.3]{kar2013cube}. If a subset $X\subset [0,1]^n$ is covered by a collection of cubes of sizes $\{d_j\}$ with $\sum_j d_j^k < \varepsilon$ we say that $X$ is \emph{$k$-dimensionally $\varepsilon$-covered}; this is a part of the definition of $k$-dimensional Hausdorff measure.

\begin{lemma}
\label{lemma:filling}
For any cycle $z\in Z_k([0,1]^n,\partial [0,1]^n)$ with $0\le k < n$, $k$-dimensionally $\varepsilon$-covered, one can find its filling $H(z)\in C_{k+1}([0,1]^n,\partial [0,1]^n)$ such that $\partial H(z) = z\pmod {\partial [0,1]^n}$ and $H(z)$ is $(k+1)$-dimensionally $A_{k,n}\varepsilon$-covered for some $A_{k,n}>0$. Additionally, the $A_{k,n}\varepsilon$-cover of $H(z)$ will only depend on the $\varepsilon$-cover of $z$ and not on $z$ itself.
\end{lemma}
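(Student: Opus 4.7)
The crucial arithmetic observation is that $d^{k+1}\le d^{k}$ whenever $d\le 1$. Since every cube $Q_j$ in the given cover is contained in $[0,1]^n$ we have $d_j\le 1$, and it therefore suffices to realise the filling $H(z)$ inside a slightly inflated version $\bigcup_j \alpha Q_j$ of the original cover, for a fixed constant $\alpha=\alpha(n,k)$. Indeed, covering $H(z)$ by the cubes $\{\alpha Q_j\}$ then yields
\[
\sum_j (\alpha d_j)^{k+1}\le \alpha^{k+1}\sum_j d_j^{k+1}\le \alpha^{k+1}\sum_j d_j^k<\alpha^{k+1}\varepsilon,
\]
so that the required constant is $A_{k,n}=\alpha^{k+1}$. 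Moreover the produced cover is a function of the cover $\{Q_j\}$ alone, as required by the last sentence of the lemma.

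\textbf{Constructing the filling.} To put $H(z)$ inside $\bigcup_j \alpha Q_j$, I would follow Gromov's standard scale-by-scale coning scheme. After a perturbation of bounded multiplicative cost, replace the $Q_j$ by dyadic cubes aligned to a common grid and order them by increasing side length. For each cube $Q_j$, in the assigned order, cone the current residual cycle restricted to $Q_j$ from the geometric centre of $Q_j$; this produces a PL $(k+1)$-chain $h_j$ inside $\alpha Q_j$ whose boundary cancels the piece of the residual supported in $Q_j$, modulo a new cycle lying on $\partial(\alpha Q_j)$ that will be absorbed at the next (larger) scale. Boundary pieces escaping past $\partial [0,1]^n$ are absorbed into the relative-boundary condition. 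Set $H(z)=\sum_j h_j$; by construction $H(z)\subset\bigcup_j \alpha Q_j$ and $\partial H(z)=z\pmod{\partial[0,1]^n}$.

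\textbf{Main obstacle.} The delicate point is verifying that the cone-fillings glue coherently: after subtracting $\partial h_j$ from the current residual, the result must again be a PL cycle supported in the union of the remaining (larger) cubes, so that it in turn can be processed at the next scale. This uses the PL general-position hypothesis on $z$ and on the interfaces $\partial Q_j$ arranged earlier in the proof of Theorem~\ref{theorem:cover-haus}; the inflation factor $\alpha$ is needed precisely to give the cones enough room to live in $\alpha Q_j$ without disturbing already-processed pieces. Making each cone canonical---centred at the geometric centre of its cube and processed in a fixed dyadic order---ensures that the resulting cover of $H(z)$ depends only on $\{Q_j\}$, as required for the iterative use of the lemma in the main argument.
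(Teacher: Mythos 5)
There is a genuine gap, and it sits at the heart of your plan: the claim that the filling $H(z)$ can be realised inside $\bigcup_j \alpha Q_j$ for a \emph{bounded} inflation factor $\alpha=\alpha(n,k)$ is false. Take $n=2$, $k=0$: a relative $0$-cycle $z$ consisting of two points deep in the interior of $[0,1]^2$, far from each other, each covered by one tiny cube. Any $1$-chain with $\partial H(z)=z \pmod{\partial[0,1]^2}$ must travel from each point either to the other point or to $\partial[0,1]^2$, so it cannot stay inside any bounded dilation of the two tiny cubes. Consequently the arithmetic $\sum_j(\alpha d_j)^{k+1}\le \alpha^{k+1}\sum_j d_j^{k}$ never gets off the ground, and the observation $d^{k+1}\le d^{k}$ is not the mechanism that makes the lemma true. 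The same example also shows why your coning scheme cannot close up: after coning inside the largest cube there is still a residual cycle on its boundary with no ``next scale'' to absorb it, and more generally the residual pushed onto $\partial(\alpha Q_j)$ need not lie in any of the remaining cubes of the cover, so the inductive invariant you need (``the residual is again supported in the union of the unprocessed cubes'') fails.

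The paper's proof works precisely because the filling is \emph{not} confined to an inflation of the cover. One first slices $z$ by a hyperplane $\{x_1=t\}$, choosing $t$ by averaging so that the slice $z_t$ is $(k-1)$-dimensionally $\varepsilon$-covered by the cubes it meets; $z_t$ is filled by induction on dimension inside the lower-dimensional cube and the result is multiplied by $[0,1]$. The two halves $z_{\le t}$, $z_{\ge t}$ are then swept (cylindered) onto the opposite facets $F_0$, $F_1$ of $[0,1]^n$. The cylinder over a covering cube of size $d_j$ is a box of length $1$, which is covered by $\lceil 1/d_j\rceil\le 2/d_j$ cubes of size $d_j$, contributing $(2/d_j)\,d_j^{k+1}=2d_j^{k}$ to the $(k+1)$-content; summing gives the factor $2$ and the recursion $A_{k,n}=2A_{k-1,n-1}+2$. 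It is this ``long thin box'' count --- trading one power of $d_j$ for a factor $1/d_j$ in the number of cubes --- that converts the $k$-dimensional $\varepsilon$-cover of $z$ into a $(k+1)$-dimensional $A_{k,n}\varepsilon$-cover of $H(z)$, and since the cylinders and the choice of $t$ depend only on the cubes $\{Q_j\}$, the final cover depends only on the given cover, as the lemma requires. If you want to salvage your approach you would have to replace ``stay inside $\alpha Q_j$'' by an explicit account of the chains that connect distant cubes (to each other or to $\partial[0,1]^n$) and show their $(k+1)$-content is still $O(\varepsilon)$; at that point you are essentially forced back to a cylinder-type estimate.
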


The proof of the lemma is a minor adaptation of the argument in~\cite{kar2013cube}, so we postpone it and continue with the theorem. We take $\varepsilon < 1$ and, since every point has $0$-dimensional Hausdorff measure $1$, $C_I$ must be empty for $|I|> n+1$. Let $m$ be the maximal $|I|$ corresponding to nonempty $C_I$. The general position of the map $g$ and the parts $C_i$ implies
$$
\partial C_I = \sum_{i\not\in I} C_{\{i\}\cup I};
$$
this formula is understood as an equality of chains modulo $2$, relative to the boundary of the cube. The idea behind this formula is that, under the general position of the $C_i$, the intersections $C_I$ is locally a PL-submanifold of codimension $|I|-1$, unless it touches its sub-intersection $C_{\{i\}\cup I}$, more explanations about this can be found in~\cite{kar2013cube}. Every $C_I$ with maximal $|I|=m$ is, by the boundary formula, an $(n+1-m)$-dimensional relative cycle and by Lemma~\ref{lemma:filling}, we fill it with an $(n+2-m)$-dimensional chain $F_I$. Since $C_I$ is $(n+1-m)$-dimensionally $\epsilon$-covered then $F_I$ is $(n+2-m)$-dimensionally $A_{n+1-m, n}\varepsilon$-covered by some collection of cubes. After that we proceed by descending induction on $|I|$ and continue filling
$$
F_I = H\left(C_I - \sum_{i\not\in I} F_{\{i\}\cup I}\right)\ \text{to have}\ \partial F_I = C_I - \sum_{i\not\in I} F_{\{i\}\cup I}.
$$
The expression we fill is a cycle because we calculate by induction
$$
\partial \left(C_I - \sum_{i\not\in I} F_{\{i\}\cup I}\right) = \sum_{i\not\ni I} C_{\{i\}\cup I} - \sum_{i\not\in I} C_{\{i\}\cup I} + \sum_{i, j\not\in I} F_{\{i\}\cup \{j\}\cup I} = 0,
$$
the last double sum cancels modulo $2$, because the summands go in pairs of equal $F_{\{i\}\cup \{j\}\cup I}$ and $F_{\{j\}\cup \{i\}\cup I}$.

All such sets $F_I$ are $(n+2-|I|)$-dimensionally $A_{|I|}\varepsilon$-covered for some big constant $A_{|I|}$. Here we \emph{use the fact that the degree of the intersection graph is bounded} and therefore the sum, the argument of $H$, has a bounded number of summands. In~\cite{kar2013cube} there was no assumption of the bounded degree of the intersection graph, but there was Lemma~3.1 to give the required total estimate; it is not clear if we can avoid the assumption of bounded total degree in Theorem~\ref{theorem:cover-haus} here.

Eventually, the cycles
$$
X_i = C_i - \sum_{j\neq i} F_{i,j}
$$
will be $n$-dimensionally $A\varepsilon$-covered for some accumulated constant $A$. If we chose $\varepsilon < 1/A$ then every such cycle $X_i$ has volume less than $1$ and is therefore zero in $H_n([0,1]^n,\partial [0,1]^n)$. So their sum must also be zero in the homology, but from the evident equality $F_{i,j} = F_{j,i}$, working modulo $2$ we obtain
$$
\sum X_i = \sum_i C_i - \sum_{i\neq j} F_{i,j} = \sum_i C_i,
$$
which is the nontrivial generator of $H_n([0,1]^n,\partial [0,1]^n)$. This contradiction proves Theorem~\ref{theorem:cover-haus}.

\subsection{Proof of Theorem~\ref{theorem:hauswaist}}

Assume we have a small $\varepsilon>0$ such that every $f^{-1}(y)$ is $(n-k)$-dimensionally $\varepsilon$-covered by a set of open cubes (the terminology is the same as in the above section). Then $f^{-1}(y)$ is covered along with its neighborhood and from compactness it follows that there exists a neighborhood $U_y\ni y$ such that $f^{-1}(U_y)$ is $(n-k)$-dimensionally $\varepsilon$-covered.

Such open sets $U_y$ cover $Y$ and we are going to find a partition $Y=D_1\cup\dots\cup D_N$ of $Y$ into compact sets such that no point of $Y$ is covered more than $k+1$ times and every $D_j$ is contained in certain $U_y$. In order to have every $D_j$ contained in $U_y$ we just take a triangulation $T$ of $Y$ and make barycentric subdivisions of it many times setting $T_m = T_{m-1}'$. Put $D_j$ to be the stars of vertices of $T_m$ in $T_{m+1} = T_m'$. 
We claim that for sufficiently large $m$ each star $D_j$ must be contained in a single open set $U_y$. Otherwise we subdivide infinitely many times, since the diameter of the stars is strictly decreasing, compactness implies that some sequence of sets $\overline D_m$ must converge to a point $y_0$, but then $U_{y_0}$ contained already $\overline D_m$ for sufficiently large $m$.

Put $C_j = f^{-1}(D_j)$, from the above construction every such set and every intersection of several of them is $m$-dimensionally $\varepsilon$-covered, for every $m\ge n-k$. Now the proof of Theorem~\ref{theorem:cover-haus} would work if we guaranteed that the incidence graph of the covering $\{D_j\}$ (and therefore $\{C_j\}$) has degree bounded (in terms of $k$). In general the bounded degree cannot be achieved, but we modify the argument to remedy this. In the formula
\begin{equation}
\label{equation:filling}
F_I = H\left(C_I - \sum_{i\not\in I} F_{\{i\}\cup I}\right).
\end{equation}
the number of summands is not bounded, but we observe that the proof of Lemma~\ref{lemma:filling} works independently of the cycle $z$, the filling can be made to depend only on the collection of cubes covering $z$. We need to guarantee that the right hand side of (\ref{equation:filling}) only invokes a bounded number of such collections of cubes. The collections of cubes we use are in the correspondence with the sets $C_i$. We need a scheme of assigning such collections to every index sets $I\cup\{i\}$ so that only a bounded number of the collections is invoked in the formula (\ref{equation:filling}) for every $I$. Such a scheme is guaranteed by the following:

\begin{lemma}
\label{lemma:assignment}
Let $T$ be a triangulation of a $k$-dimensional polyhedron and let $T'$ be its barycentric subdivision. For every face $\sigma$ of $T'$, it is possible to assign a vertex, $v=f(\sigma)$, of $T'$ so that $v\in \sigma$ and all the sets of vertices
$$
N_v = \{f(\sigma) : \sigma\ni v\}
$$
have sizes bounded in terms of $k$.
\end{lemma}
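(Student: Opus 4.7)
The plan is to leverage the standard combinatorial description of the barycentric subdivision: the vertices of $T'$ are in bijection with the faces of $T$ via $\tau \mapsto b_\tau$ (the barycenter), and every simplex $\sigma$ of $T'$ corresponds to a strictly increasing chain $\tau_0 \subsetneq \tau_1 \subsetneq \dots \subsetneq \tau_d$ of faces of $T$, whose vertex set is exactly $\{b_{\tau_0}, b_{\tau_1}, \dots, b_{\tau_d}\}$. Any assignment $f$ of the required type must be equivariant with respect to this combinatorial structure, so the game is to pick a canonical position in the chain.

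My first move is to define $f(\sigma) := b_{\tau_0}$, the barycenter of the smallest-dimensional face appearing in the chain encoding $\sigma$. Since $b_{\tau_0}$ is a vertex of $\sigma$ by construction, the requirement $f(\sigma) \in \sigma$ is automatic, and no further geometric verification is needed.

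Next I would establish the bound on $N_v$. Fix a vertex $v = b_\tau$ of $T'$. A simplex $\sigma$ of $T'$ contains $v$ precisely when $\tau$ occurs somewhere in its defining chain, and in that case $f(\sigma) = b_{\tau_0}$ for some face $\tau_0 \subseteq \tau$; conversely, every face $\tau_0 \subseteq \tau$ arises this way by taking the chain $\tau_0 \subseteq \tau$. Hence $N_v$ is contained in $\{b_{\tau_0} : \tau_0 \text{ is a face of } \tau\}$, and since $\dim \tau \le k$, the cardinality is bounded by the number of faces of a $k$-simplex, giving $|N_v| \le 2^{k+1}$, a bound depending only on $k$.

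I do not foresee a substantial obstacle here: once the chain description of $T'$ is in hand the argument collapses to a one-line combinatorial count. The only conceptual point worth highlighting is that choosing the \emph{minimum} of the chain (rather than the maximum or some intermediate element) is what confines the preimages $N_v$ to the faces of the single simplex $\tau$, thereby keeping them bounded independently of the global combinatorics of $T$.
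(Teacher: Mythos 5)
Your proposal is correct and is essentially identical to the paper's own proof: both assign to a simplex of $T'$ (viewed as a chain of faces of $T$) the minimal element of that chain, and both bound $|N_v|$ by the number of subfaces of the face of $T$ corresponding to $v$. The only difference is the cosmetic one between the constants $2^{k}-1$ and $2^{k+1}$, which is immaterial since any bound depending only on $k$ suffices.
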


\begin{proof}
Note that a vertex $v$ of $T'$ is a face of $T$ by definition, and a face of $T'$ is a collection of faces of $T$ linearly ordered by inclusion, that is 
$$
\sigma = \{v_0<\dots < v_m\}.
$$
Let the function $f$ just assign to every $\sigma$ its minimal element, $v_0$ in the above expression. Now the assumption $\sigma\ni v$ and $w = f(\sigma)$ implies $w<v$. In terms of $T$, this means that the face $w$ is a subface of $v$. But for a given $v$ the number of such subfaces $w$ is at most $2^k-1$, that is bounded in terms of $k$.
\end{proof}

\subsection{Proof of Lemma~\ref{lemma:filling}}

Here we follow the proof of~\cite[Lemma 2.3]{kar2013cube} almost literally.
\begin{proof}
 Let $z$ be covered by the cubes $Q_1,\ldots, Q_N$ of sizes $d_1,\ldots,d_N$. 
Consider all possible intersections $z_t=z\cap \{x_1=t\}$ with a hyperplane orthogonal to the $0x_1$ axis. Put 
$$
J(t) = \{i : Q_i\cap \{x_1=t\} \neq \emptyset\},\quad S(t) = \sum_{j\in J(t)} d_j^{k-1}.
$$
It is easy to see that 
$$
\int_0^1 S(t) = \sum_j d_j^k < \varepsilon
$$
and therefore, for some $t$, $S(t)<\varepsilon$. Hence the set $z_t$ is $(k-1)$-dimensionally $\varepsilon$-covered by $\{Q_j\}_{j\in J(t)}$, for generic $t$ this $z_t$ must be a $(k-1)$-cycle in $([0,1]^n,\partial [0,1]^n)$.

Now we cut the cycle $z$ into two chains $z = z_{\le t} + z_{\ge t}$, where $z_{\le t}$ corresponds to $x_1 \le t$ and $z_{\ge t}$ corresponds to $x_1 \ge t$. The cutting is applicable because after cutting every simplex of $z$ we may triangulate the cut parts to make it again a simplicial chain. It is clear that
$$
\partial z_{\le t} = z_t \pmod{\partial [0,1]^n}\quad\text{and}\quad  \partial z_{\ge t} = -z_t \pmod{\partial [0,1]^n}.
$$

Let $F_0$ and $F_1$ be the facets of $[0,1]^n$ with $x_1=0$ and $x_1=1$ respectively. If a chain $y\in C_\ell([0,1]^n,\partial [0,1]^n)$ is given as a PL image of a simplicial complex $y : K\to [0,1]^n$ then we construct a PL map $I_0(y) : K\times [0, 1] \to [0,1]^n$ by sending $p\times 1$ to $y(p)$, $p\times 0$ to the projection of $y(p)$ onto $F_0$, and $p\times s$ to the corresponding combination of $I_0(y)(p\times 0)$ and $I_0(y)(p\times 1)$. If $y$ does not touch $F_1$ then we have:
$$
\partial I_0(y) = y + I_0(\partial y)\pmod{\partial [0,1]^n}.
$$
Similarly we define $I_1(y)$ with the projection onto $F_1$ for those $y$ that do not touch $F_0$ with
$$
\partial I_1(y) = y - I_1(\partial y)\pmod{\partial [0,1]^n}.
$$
Now return to our cycle $z=z_{\le t}+z_{\ge t}$ and put
$$
H(z) = I_0(z_{\le t}) + I_1(z_{\ge t}) - H(z_t)\times [0, 1].
$$
For the boundary we have (modulo $\partial [0,1]^n$):
$$
\partial H(z) = \partial I_0(z_{\le t}) + \partial I_1(z_{\ge t}) - z_t\times [0,1] = z_{\le t} + I_0(z_t) + z_{\ge t} - I_1(z_t) - z_t\times [0,1] = z,
$$
because $I_0(z_t) - I_1(z_t)$ obviously equals $z_t\times [0,1]$. It remains to make an economic covering of $H(z)$ by cubes.

The parts $I_0(z_{\le t})$ and $I_1(z_{\ge t})$ will we covered if we extend the cubes in the original collection $\{Q_j\}$ so that $x_1$-coordinate spans $[0,1]$, making every cube $Q_j$ a long box, that is coverable by at most
$$
\left\lceil\frac{1}{d_j}\right\rceil \le \frac{2}{d_j}
$$ 
cubes of size $d_j$. If those new cubes are denoted by $Q'_{j,\ell}$ with $\ell$ from $1$ to something at most $\frac{2}{d_j}$ then for their sizes we have
$$
\sum_{Q'_{j,\ell}} d_j^{k+1} \le \sum_j \frac{2}{d_j} d_j^{k+1} = 2 \sum_j d_j^k,
$$ 
so $I_0(z_{\le t}) + I_1(z_{\ge t})$ is $(k+1)$-dimensionally $2\varepsilon$-covered.

The cycle $z_t$ is considered to be in the cube $Q' = Q\cap \{x_1=t\}$ of lower dimension; it has $(k-1)$-dimensional and $\varepsilon$-covering by the choice of $t$. $H(z_t)$ is defined by the inductive assumption and is $k$-dimensionally $A_{k-1,n-1}\varepsilon$-covered by some cubes. The multiplication by the segment $[0,1]$ produces $(k+1)$-dimensional chains in $Q$ from chains in $Q'$ in an obvious way. Again, every covering cube of size $d_j$ after this has to be replaced by approximately 
$$
\left\lceil\frac{1}{d_j}\right\rceil \le \frac{2}{d_j}
$$ 
cubes of the same size, so the cylinder $H(z_t)\times [0,1]$ gets $(k+1)$-dimensionally $2A_{k-1,n-1}\varepsilon$-covered.

In conclusion, if we put 
$$
A_{k,n} = 2A_{k-1,n-1} + 2.
$$

then $H(z)$ is $(k+1)$-dimensionally $A_{k,n}$-coverable \end{proof}

\bibliography{../Bib/karasev}
\bibliographystyle{abbrv}
\end{document}